\documentclass[11pt]{amsart}
\usepackage{hyperref}
\usepackage{amsfonts}
\usepackage{amsmath}
\usepackage{amssymb}
\usepackage{amscd}
\usepackage{graphicx}
\usepackage{enumitem}
\usepackage{latexsym}
\usepackage{color}
\usepackage{comment}
\usepackage{epsfig}
\usepackage{amsopn}
\usepackage{mathrsfs}
\usepackage{array}
\usepackage[usenames,dvipsnames]{pstricks}
\usepackage{epsfig}
\usepackage{pst-grad} % For gradients
\usepackage{pst-plot} % For axes
\usepackage[space]{grffile} % For spaces in paths
\usepackage{etoolbox} % For spaces in paths
\makeatletter % For spaces in paths
\patchcmd\Gread@eps{\@inputcheck#1 }{\@inputcheck"#1"\relax}{}{}
\makeatother

\hypersetup{pdfpagemode=UseNone}
\sloppy

\setenumerate[0]{label=(\alph*)}

\setlength{\oddsidemargin}{0cm} \setlength{\evensidemargin}{0cm}
\setlength{\textwidth}{6.55in} 

\setlength{\topmargin}{-.4in}
\setlength{\textheight}{9.0in} \setlength{\headheight}{.2in} % was 24 cm
\setlength{\headsep}{.2in} \setlength{\baselineskip}{0pt}
\setlength{\parskip}{.5mm}

\usepackage{booktabs}
\usepackage{longtable}
\setlength{\LTpost}{-15pt}

\theoremstyle{plain}
\newtheorem{lemma}{Lemma}[section]
\newtheorem*{theorem*}{Theorem}
\newtheorem*{lemma*}{Lemma}
\newtheorem*{proposition*}{Proposition}
\newtheorem*{conjecture*}{Conjecture}
\newtheorem*{corollary*}{Corollary}
\newtheorem*{problem*}{Problem}
\newtheorem{theorem}[lemma]{Theorem}
\newtheorem{conjecture}[lemma]{Conjecture}
\newtheorem{corollary}[lemma]{Corollary}
\newtheorem{proposition}[lemma]{Proposition}

\newtheorem{question}[lemma]{Question}

\theoremstyle{definition}
\newtheorem{definition}[lemma]{Definition}
\newtheorem{example}[lemma]{Example}
\newtheorem{remark}[lemma]{Remark}

\newcommand{\fto}[1]{\stackrel{#1}{\to}}

\newcommand{\FF}{\mathbb{F}}

\newcommand{\CC}{\mathbb{C}}
\newcommand{\QQ}{\mathbb{Q}}

\newcommand{\OO}{\mathcal{O}}
\newcommand{\te}{\otimes}

\newcommand{\bw}{{\bf w}}

\newcommand{\ra}{\rightarrow}
\newcommand{\ora}{\overrightarrow}

\renewcommand{\P}{\mathbb{P}}

\newcommand{\PP}{\mathbb{P}}

\DeclareMathOperator{\ch}{ch}

\DeclareMathOperator{\Hom}{Hom}

\DeclareMathOperator{\im}{im}
\DeclareMathOperator{\Sym}{Sym}

\DeclareMathOperator{\rk}{rk}

\DeclareMathOperator{\Ext}{Ext}

\DeclareMathOperator{\ev}{ev}

\DeclareMathOperator{\sHom}{\mathcal{H}\kern -.5pt\mathit{om}}
\DeclareMathOperator{\sTor}{\mathcal{T}\kern -1.5pt\mathit{or}}

\begin{document}

\title{Stability and cohomology of kernel bundles on $\mathbb{P}^n$}

\date{\today}
\author[I. Coskun]{Izzet Coskun}
\address{Department of Mathematics, Statistics and CS \\University of Illinois at Chicago, Chicago, IL 60607}
\email{icoskun@uic.edu}

\author[J. Huizenga]{Jack Huizenga}
\address{Department of Mathematics, The Pennsylvania State University, University Park, PA 16802}
\email{huizenga@psu.edu}

\author[G.  Smith]{Geoffrey Smith}
\address{Department of Mathematics, Statistics and CS \\University of Illinois at Chicago, Chicago, IL 60607}
\email{geoff@uic.edu}

\subjclass[2010]{Primary: 14J60, 14F06,  32L10. Secondary: 14D20}
\keywords{Steiner bundles, stable bundles, ample bundles, maximal rank}
\thanks{During the preparation of this article the first author was partially supported by the NSF FRG grant DMS 1664296
and the second author was partially supported by NSF FRG grant DMS 1664303.}

\begin{abstract}
In this paper, we study the cohomology of vector bundles on $\PP^n$ defined as kernels or cokernels of general maps $V_1 \to V_2$, where the $V_i$ are direct sums of line bundles or certain exceptional bundles. We prove an asymptotic cohomology vanishing theorem. We  characterize the stability of general Steiner bundles on $\PP^n$.  We also give a criterion for a Steiner bundle to be ample.
\end{abstract}

\maketitle

\setcounter{tocdepth}{1}
\tableofcontents

\section{Introduction}
In this paper, we study the cohomology of a  bundle on $\PP^n$ defined as the kernel or cokernel of a general map of vector bundles $V_1 \to V_2$, where $V_i$ are direct sums of line bundles or more generally direct sums of certain exceptional bundles. Our motivation comes from the study of vector bundles on $\PP^2$.

 Moduli spaces of stable vector bundles $M_{\PP^2}({\bf v})$ on $\PP^2$ are well-studied. Dr\'ezet and Le Potier have classified the Chern characters of stable bundles on $\PP^2$ \cite{DLP85}. The general bundle in $M_{\PP^2}({\bf v})$ has at most one nonzero cohomology group \cite{GH98} and admits a Gaeta resolution (see \cite{AGJ, CH18, CH20, Gae}). The Gaeta resolution allows one to classify the moduli spaces where the general bundle is globally generated and deduce the unirationality of the moduli space. Similarly, the ample and effective cones of $M_{\PP^2}({\bf v})$ have been computed \cite{CH16, CHW17}. The computations of the effective cone requires a resolution of the general bundle in terms of a judiciously chosen strong exceptional collection. Furthermore, the same resolution allows one to compute the cohomology of the tensor product of two general stable bundles, at least if one of the bundles has sufficiently divisible Chern character \cite{CHK21}. Given the importance of these two types of resolution for bundles on $\PP^2$, it is natural to ask for the properties of bundles on $\PP^n$ defined by similar resolutions.

First, we study the extent to which cohomology vanishing on $\PP^2$ extends to $\PP^n$ with $n\geq 3$. In \S \ref{sec-SKSmaximalrank}, we introduce the notion of a {\em strongly Kronecker stable pair of bundles} and give a criterion for when  maps between direct sums of such bundles induce maximal rank maps on global sections. In the case of line bundles, we can summarize our theorem as follows.

\begin{theorem}\label{firstTheorem}
Let $s_1,\ldots,s_a,t_1,\ldots,t_b$ be nonnegative integers.  Let
$$V_1 = \bigoplus_{1\leq i\leq a} \OO_{\PP^n}(d_i)^{s_i} \quad \mbox{and} \quad V_2 = \bigoplus_{1\leq j\leq b}\OO_{\PP^n}(e_j)^{t_j}$$ with $0 \leq d_i < e_j$ for all $i, j$. Suppose  $\rk(V_1)+\rk(V_2)\geq N$, where $N$ is an integer depending on $n$ and the integers $d_i$ and $e_j$.
Then a general map $M: V_1 \to V_2$ induces a maximal rank map on global sections. If  $\rk(V_1)\geq \rk(V_2)+n,$ then $V=\ker M$ is a locally free sheaf with $$h^0(V)=\max(\chi(V), 0), \quad   h^1(V)=\max(-\chi(V),0) \quad \mbox{and}\quad h^i(V)=0 \ \mbox{for} \ i \geq 2.$$ 
\end{theorem}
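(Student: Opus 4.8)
The statement breaks into three parts of increasing difficulty: (A) for general $M$ the map $H^0(M)\colon H^0(V_1)\to H^0(V_2)$ has maximal rank; (B) for general $M$ the sheaf $V=\ker M$ is locally free; and (C) the cohomology of $V$ is as displayed. The plan is to treat (B) and (C) as essentially formal consequences of (A) together with Bott vanishing, and to prove (A) --- which is the line bundle case of the maximal rank theorem of \S\ref{sec-SKSmaximalrank} --- by an inductive argument on $n$ built around the notion of a strongly Kronecker stable collection of bundles.

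\emph{Local freeness.} The sheaf $\sHom(V_1,V_2)=\bigoplus_{i,j}\OO_{\PP^n}(e_j-d_i)^{\oplus s_it_j}$ is globally generated, since $e_j-d_i\geq 1$, and $H^0$ of it equals $\Hom(V_1,V_2)$. Because $\rk V_1-\rk V_2+1>n$, a standard transversality argument for general sections of a globally generated bundle shows that the degeneracy locus $\{x\in\PP^n:\rk M_x\leq\rk V_2-1\}$ is empty for general $M$. Hence such an $M$ is surjective as a morphism of sheaves and $V=\ker M$ is locally free of rank $\rk V_1-\rk V_2$.

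\emph{Cohomology from (A).} Since every $d_i\geq 0$ and every $e_j\geq 1$, Bott's formula gives $H^i(V_1)=H^i(V_2)=0$ for all $i\geq 1$. Feeding $0\to V\to V_1\xrightarrow{M}V_2\to 0$ into the long exact sequence in cohomology therefore yields $H^i(V)=0$ for $i\geq 2$, together with $H^0(V)=\ker H^0(M)$, $H^1(V)=\coker H^0(M)$, and $\chi(V)=h^0(V_1)-h^0(V_2)$. Now invoke (A): if $\chi(V)\geq 0$ then $h^0(V_1)\geq h^0(V_2)$, so maximal rank forces $H^0(M)$ surjective, giving $h^1(V)=0$ and $h^0(V)=\chi(V)$; if $\chi(V)<0$ then $H^0(M)$ is injective, giving $h^0(V)=0$ and $h^1(V)=-\chi(V)$. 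In either case $h^0(V)=\max(\chi(V),0)$ and $h^1(V)=\max(-\chi(V),0)$, as claimed.

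\emph{The maximal rank statement, and the main obstacle.} Everything rests on (A). The map $H^0(M)$ is a block ``generalized Kronecker'' matrix whose $(j,i)$ block is a $t_j\times s_i$ array of multiplication maps $H^0(\OO(d_i))\to H^0(\OO(e_j))$ by general forms of degree $e_j-d_i$, and the task is to show this has maximal rank once $\rk V_1+\rk V_2\geq N$. I would argue by induction on $n$. When $n=1$ all the bundles split, the generic splitting type of a general map between sums of line bundles on $\PP^1$ is as balanced as possible, and (A) follows by inspection; this is the base case and contributes part of $N$. For the inductive step, fix a general hyperplane $H\cong\PP^{n-1}$ with equation $h$ and use the sequences $0\to\OO(d-1)\xrightarrow{h}\OO(d)\to\OO_H(d)\to 0$ to split each $H^0(\OO(d))$ into a ``divisible by $h$'' part and a ``restriction to $H$'' part. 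Specializing $M$ so that a suitable subset of its blocks is divisible by $h$ and the rest is general along $H$ makes $H^0(M)$ block triangular, with one block a map of the same type on $\PP^n$ but of strictly smaller ranks and the other a map of the corresponding type on $\PP^{n-1}$ covered by the inductive hypothesis; a Horace-style gluing lemma then promotes maximal rank of the blocks to maximal rank of $H^0(M)$, and semicontinuity carries this back to a general $M$. The delicate points --- and where the real work lies --- are: organizing the specialization so that the induction closes uniformly in the configuration of the $d_i$ and $e_j$ (this is what forces the hypothesis $\rk V_1+\rk V_2\geq N$, with $N$ collecting the losses incurred at each step and in the base case); handling blocks of multiplication degree $e_j-d_i>1$, for which a single hyperplane degeneration is insufficient and one must iterate or interleave degenerations; and checking the stability of the intermediate bundles so that the induction stays within the class of strongly Kronecker stable pairs --- exactly the point at which this argument meets the stability results of the paper.
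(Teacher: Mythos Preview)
Your treatment of (B) and (C) is correct and matches what the paper uses implicitly: global generation of $\sHom(V_1,V_2)$ plus the rank inequality force the degeneracy locus to be empty, and then the long exact sequence together with the vanishing of higher cohomology of $V_1,V_2$ reduces the cohomology statement to (A).

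For (A), however, your approach diverges sharply from the paper's, and your sketch does not close. The paper does \emph{not} argue by induction on $n$ or by hyperplane degeneration in the Horace style. Instead it proceeds in two steps. First, it shows directly that for $0\le i<j$ the pair $\OO_{\PP^n}(i),\OO_{\PP^n}(j)$ is strongly Kronecker stable: for every proper nonzero subspace $U\subset H^0(\OO(i))$, the image of $U\otimes H^0(\OO(j-i))$ in $H^0(\OO(j))$ has strictly larger relative dimension than $U$ does. This is Lemma~\ref{lemma}, and it is proved by Macaulay's bound on the growth of Hilbert functions of graded ideals --- a purely combinatorial input, with no geometry of hyperplane sections. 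Second, the paper sets up the incidence correspondence $\Phi=\{(v,M):Mv=0\}\subset H^0(V_1)\times\Hom(V_1,V_2)$ and bounds $\dim\Phi$ by stratifying $H^0(V_1)$ according to the dimensions $\rho_i(v)=\dim\operatorname{span}\{\text{$E_i$-components of }v\}$; strong Kronecker stability gives a strict inequality between the ratios $\rho_i/h^0(E_i)$ and $\sigma_j/h^0(F_j)$ on each stratum, and the asymptotic hypothesis on $\rk V_1+\rk V_2$ is exactly what is needed to absorb the Grassmannian dimensions $\dim G(\rho_i,e_i)$ arising from the stratification. This yields the explicit constant $N$ in Theorem~\ref{lineBundleTechnical}.

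Your Horace-type proposal is not unreasonable in spirit --- related arguments appear in \cite{EH92} --- but as written it is only a plan, and the ``delicate points'' you flag are exactly the substance of such a proof; you have not shown the induction closes or produced any $N$. More concretely, your last sentence conflates two unrelated notions: the ``stability results of the paper'' in \S\ref{sec-stability} concern slope stability of Steiner bundles and play no role whatsoever in the proof of (A). Strong Kronecker stability is a linear-algebraic condition on the evaluation map $H^0(E)\otimes\Hom(E,F)\to H^0(F)$, established for line bundles via Macaulay's theorem, not via any moduli-theoretic stability.
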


\begin{remark}
\begin{enumerate}
\item Theorem \ref{lineBundleTechnical} will make  Theorem \ref{firstTheorem} more precise by giving $N$ explicitly as a function of $n$ and the $d_i$ and $e_j$.

\item   We will deduce  Theorem \ref{lineBundleTechnical} from the more general Theorem \ref{mainCohomology}, which proves that general maps of strongly Kronecker stable pairs of bundles induce maps of maximal rank on global sections provided that the ranks are sufficiently large. Consequently, we will obtain analogues of Theorem \ref{firstTheorem} with line bundles replaced by certain twists of the tangent $T \PP^n$ and cotangent $\Omega_{\PP^n}$ bundles of $\PP^n$  (see Corollary \ref{cor-tangent}) or more general exceptional bundles. 
\end{enumerate}
\end{remark}

Some assumptions are needed on the exponents in Theorem \ref{firstTheorem} as the next example demonstrates.

\begin{example}\label{ex-scaling}
Let $V$ be the kernel of a general map
$$0 \to V \to \OO_{\PP^3}(2)^4 \to \OO_{\PP^3}(4) \to 0.$$ Then $h^0(V)=6$ and $h^1(V)=1$ (see Proposition \ref{rankn}), hence the induced map on cohomology does not have maximal rank. 
\end{example}

Example \ref{ex-scaling} is fairly typical. Let  $m \geq 2$ be an integer. One can generate infinitely many examples of bundles $V$ of rank $n$ on $\PP^n$ given as the  kernel of a general map $$\OO_{\PP^n}(a)^{t+n} \to \OO_{\PP^n}(a+m)^{t}$$  with two nonzero cohomology groups (see Remark \ref{rem-infinite}). When $V$ is the kernel of a general matrix of linear forms, we could not find such examples. Consequently, after some terminology, we make the following conjecture. 

A \emph{Steiner bundle} is a bundle $V$ admitting a presentation
\begin{equation}\label{eq-steiner}
0 \to \OO_{\PP^n}(-1)^t\to \OO_{\PP^n}^{t+r}\to V\to 0.
\end{equation}
These bundles were first introduced by Dolgachev and Kapranov \cite{DK93} and have since been intensively studied.
A  bundle has \emph{natural cohomology} if all its twists have at most one nonvanishing cohomology group.  The study of such bundles on $\PP^n$  was codified by \cite{BS08, ES09}, and related to the study of resolutions of Cohen--Macaulay modules.
\begin{conjecture}\label{conj-steiner}
 The  general Steiner bundle on $\PP^n$ has natural cohomology.
\end{conjecture}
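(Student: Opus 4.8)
The plan is to reduce the conjecture to a maximal-rank statement for general matrices of linear forms --- exactly the kind of statement controlled, asymptotically, by Theorem~\ref{firstTheorem} --- and then to try to remove the asymptotic hypothesis by a degeneration argument.

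First I would twist the presentation \eqref{eq-steiner} by $\OO_{\PP^n}(k)$ and take cohomology. For $k\ge 0$ both $\OO(k-1)$ and $\OO(k)$ have cohomology only in degree $0$, so $V(k)$ has only $H^0$; for $-n+1\le k\le -1$ neither line bundle has any cohomology, so $V(k)$ has none; and for $k=-n$ one reads off $H^{n-1}(V(-n))\cong k^{t}$ with all other groups zero. In each of these ranges $V(k)$ has at most one nonzero cohomology group for trivial reasons. For $k\le -n-1$ both $\OO(k-1)$ and $\OO(k)$ have cohomology only in degree $n$, so $H^i(V(k))=0$ for $i\notin\{n-1,n\}$ and the long exact sequence identifies $H^{n-1}(V(k))$ and $H^n(V(k))$ with $\ker$ and $\coker$ of a single map $\psi_k\colon H^n(\OO(k-1))^{t}\to H^n(\OO(k))^{t+r}$; thus $V(k)$ has natural cohomology precisely when $\psi_k$ has maximal rank. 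By Serre duality $\psi_k$ is dual to the map on global sections induced by a general $t\times(t+r)$ matrix of linear forms viewed as a map $\OO_{\PP^n}(a)^{t+r}\to\OO_{\PP^n}(a+1)^t$, with $a=-n-1-k\ge 0$; equivalently, writing $V^\vee$ for the co-Steiner bundle (which sits in $0\to V^\vee\to\OO^{t+r}\to\OO(1)^t\to 0$ when $r\ge n$, which we may assume), $\coker\psi_k$ and $\ker\psi_k$ are the duals of $H^0(V^\vee(a))$ and $H^1(V^\vee(a))$. So Conjecture~\ref{conj-steiner} is equivalent to the assertion that for every $a\ge 0$ a general matrix of linear forms induces a maximal-rank map $H^0(\OO_{\PP^n}(a))^{t+r}\to H^0(\OO_{\PP^n}(a+1))^t$ --- that is, $V^\vee$ has at most one of $H^0$, $H^1$ nonzero in each nonnegative twist.

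Now I would invoke Theorems~\ref{firstTheorem}, \ref{lineBundleTechnical} and \ref{mainCohomology}: for each fixed $a$ this map has maximal rank once $2t+r\ge N(n;a,a+1)$. The case $a=0$ is an elementary generic-matrix computation; for $a$ large (where the target is smaller than the source) maximal rank is the surjectivity $H^1(V^\vee(a))=0$, equivalently the vanishing $H^{n-1}(V(-a-n-1))=0$, which holds once $a$ exceeds a threshold governed by the Castelnuovo--Mumford regularity of $V^\vee$. So for a given Steiner bundle only a bounded interval of ``middle'' twists is left, and Theorem~\ref{firstTheorem} disposes of it whenever the ranks $t$, $r$ are large compared to the length of that interval. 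The genuine obstacle is that Theorem~\ref{firstTheorem} is asymptotic in the ranks: Steiner bundles with small $t$ and $r$ --- in particular the low-rank ones near $T\PP^n(-1)$ --- fall outside its scope, and one also needs an effective regularity bound for $V^\vee$. Closing the conjecture therefore requires a \emph{non-asymptotic} maximal-rank theorem for multiplication by a general matrix of linear forms. The natural route is a Horace-type degeneration (in the spirit of Hirschowitz's treatment of similar maximal-rank problems), supported by an induction on $n$ --- using that the restriction of a general Steiner bundle on $\PP^n$ to a hyperplane $H$ is again a general Steiner bundle on $\PP^{n-1}$ with the same $t$ and $r$, and that $0\to V(k-1)\to V(k)\to V|_H(k)\to 0$ propagates the cohomology vanishing --- together with an induction reducing the size of the matrix. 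The hard part will be the bookkeeping in such a degeneration: tracking, twist by twist for $k\le -n-1$, which of $H^{n-1}(V(k))$ and $H^n(V(k))$ is forced to vanish, and checking that the maps $\psi_k$ specialize to maps of maximal rank --- precisely the point not reached by the asymptotic methods of this paper.
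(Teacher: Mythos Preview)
The statement is a \emph{conjecture}; the paper does not prove it in full. What the paper does prove is two partial results (Theorem~\ref{thm-Steinerscale} and Corollary~\ref{cor-divisiblebyn}), so your proposal should be compared against those.

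Your reduction in the first two paragraphs is correct and matches the paper's setup: pass to the dual bundle $V^\vee$ sitting in $0\to V^\vee\to\OO^{t+r}\to\OO(1)^t\to 0$, and reduce natural cohomology to the assertion that for each $a\ge 0$ the induced map $H^0(\OO(a))^{t+r}\to H^0(\OO(a+1))^t$ has maximal rank. The paper sharpens this one step further in Lemma~\ref{lem-basicbound}: using Castelnuovo--Mumford regularity it shows that one need only check the \emph{two} twists $a=\lfloor tn/r\rfloor-1$ and $a=\lceil tn/r\rceil-1$, not an unspecified ``bounded interval of middle twists.'' You invoked regularity but did not extract this finite check; it matters because it makes the asymptotic bound in Theorem~\ref{thm-Steinerscale} explicit.

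On the substance, your use of Theorem~\ref{firstTheorem}/\ref{mainCohomology} for large ranks is exactly the paper's Theorem~\ref{thm-Steinerscale}. The paper's second partial result, however, uses an idea you did not anticipate: for a rank-$n$ kernel bundle one has $V\cong\Lambda^{n-1}(V^*)\otimes\det V$, and the resulting Koszul-type resolution of $V$ by bundles $\Sym^i(\OO(-1)^t)\otimes\Lambda^{n-1-i}(\OO^{t+r})$ (Proposition~\ref{rankn}) computes the cohomology of all twists directly and shows every rank-$n$ Steiner bundle has natural cohomology (Corollary~\ref{cor-rankn}). A rank-$kn$ Steiner bundle is then degenerated to a direct sum of $k$ rank-$n$ Steiner bundles (Corollary~\ref{cor-divisiblebyn}). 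This handles infinitely many small-rank cases that your asymptotic argument cannot reach.

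Your proposed Horace-type hyperplane degeneration is a plausible strategy for the remaining cases, but the paper does not attempt it, and your own closing sentence correctly flags that the requisite bookkeeping has not been done. So your proposal is not a proof: it recovers the paper's asymptotic result, misses the $\Lambda^{n-1}$ trick behind the rank-divisible-by-$n$ case, and leaves the general conjecture exactly as open as the paper does.
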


\begin{remark}
\begin{enumerate} \item For the sequence (\ref{eq-steiner}) to be exact and $V$ to be a vector bundle, we need $r \geq n$.  Hence, if  $r<n$, Conjecture \ref{conj-steiner}  does not apply. 
\item By Serre duality, the naturality of $V$ above is equivalent to the naturality of $V^*$, which fits into the exact sequence
\[
0\ra V^*\ra \OO^{t+r}_{\PP^n}\ra \OO_{\PP^n}(1)^t\ra 0
\]
We study Conjecture \ref{conj-steiner} in this equivalent form because the statement reduces to maps between spaces of global sections having maximal rank.
\end{enumerate}
\end{remark}
Conjecture \ref{conj-steiner} holds on $\PP^2$ by \cite{CH20, GH98}. In the case of $\PP^2$, the  defining resolution of a Steiner bundle is the Gaeta resolution for $V$. Hence, $V$  is a  general member of the stack of prioritary sheaves with Chern character $\ch(V)$. Since the general bundle in these stacks has at most one nonzero cohomology group, we conclude that $V$ has at most one nonzero cohomology group. The same argument applies to all the twists of $V$. We show in Lemma \ref{lem-basicbound} that $V$ has natural cohomology if there are two particular twists of $V$ with natural cohomology.  Therefore the general Steiner bundle on $\PP^2$ has natural cohomology.

On any $\PP^n$, Ellia and Hirschowitz have studied the twists $V^*(1)$ \cite{EH92}. They show that $V^*(1)$ is  globally generated, and hence has no $H^1$,  if $s\geq 4$ and $r\geq \frac{(n+1)s}{2}+1$. 

 For $n \geq 3$, we will offer the following evidence for Conjecture \ref{conj-steiner}.

\begin{theorem}
Let $$0 \to V \to \OO_{\PP^n}^{m s} \to \OO_{\PP^n}(1)^{mt} \to 0$$ define a general kernel bundle $V$.
\begin{enumerate}
\item If $m \gg 0$, then $V$ has natural cohomology (see Theorem \ref{thm-Steinerscale}).
\item If the rank of $V$ is divisible by $n$, then $V$ has natural cohomology (see Corollary \ref{cor-divisiblebyn}). 
\end{enumerate}
\end{theorem}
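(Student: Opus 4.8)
The plan is to reduce the theorem to a maximal-rank statement about maps of global sections of twists, and then apply the machinery already developed in the paper. Passing to the dual sequence (as the second remark after Conjecture~\ref{conj-steiner} suggests), we must show that $V^*$ has natural cohomology, equivalently that for every integer $k$ the map
\[
H^0(\OO_{\PP^n}^{ms}(k)) \to H^0(\OO_{\PP^n}(1)^{mt}(k))
\]
induced by a general $M\colon \OO_{\PP^n}^{ms}\to \OO_{\PP^n}(1)^{mt}$ has maximal rank, together with the analogous vanishing in higher cohomology. The higher cohomology is automatic: from $0\to V^*\to\OO^{ms}\to\OO(1)^{mt}\to0$ and the fact that $\OO_{\PP^n}(k)$ and $\OO_{\PP^n}(k+1)$ have cohomology only in degrees $0$ or $n$, one sees $H^i(V^*(k))=0$ for $2\le i\le n-1$ always, and the top cohomology $H^n(V^*(k))$ is governed by $H^{n-1}(\OO(k+1)^{mt})\to H^n(V^*(k))\to H^n(\OO(k)^{ms})$. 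So the content is entirely about $H^0$ and $H^1$, i.e. the maximal rank of the displayed map.

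For part (a): the pair $(\OO_{\PP^n},\OO_{\PP^n}(1))$ is a strongly Kronecker stable pair of bundles, so Theorem~\ref{mainCohomology} (equivalently Theorem~\ref{lineBundleTechnical}) applies once the ranks are large enough. Concretely, for each twist $k$, we want the map $H^0(\OO(k))\otimes \CC^{ms}\to H^0(\OO(k+1))\otimes\CC^{mt}$ to have maximal rank; the theorem guarantees this provided $ms+mt\ge N(n,0,1)$ — but we need it \emph{simultaneously} for the finitely many values of $k$ that matter (those with $d_i\le -k-1$... more precisely the range of $k$ for which both source and target are nonzero and the numerics are delicate, which is a bounded range $-1 \ge k \ge -(\text{something depending only on } n)$; for $k$ outside this range one side vanishes and maximal rank is trivial). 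Scaling $m$ up only helps: $N$ depends on $n,d_i,e_j$ but not on the multiplicities, so for $m\gg0$ we have $ms+mt\ge N$, and a general $M$ works for all relevant $k$ at once since "general" is an intersection of finitely many nonempty Zariski-open conditions. This yields Theorem~\ref{thm-Steinerscale}.

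For part (b): here we cannot scale, so instead we use divisibility. If $n\mid \rk V = ms - mt$, then $\chi(V(k))$ should, for the critical twists, be a multiple of $n$ times a binomial-type quantity, and the idea is that $V$ (or $V^*$) degenerates to, or is closely related to, a direct sum of $n/\!\gcd$ copies of the twisted (co)tangent bundle situation handled in Corollary~\ref{cor-tangent}, whose cohomology is controlled. More precisely, I expect one writes a specialization of the general matrix $M$ whose kernel is a sum of bundles built from $T\PP^n(-1)$ or $\Omega_{\PP^n}(1)$ — these have the right rank $n$ and the right resolutions — and uses semicontinuity of cohomology plus Corollary~\ref{cor-tangent} (which gives natural cohomology for those exceptional-bundle analogues) to conclude the general $V$ has cohomology at least as small, hence natural. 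The divisibility is exactly what makes the combinatorial count of such a block decomposition work out.

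The main obstacle is part (b): engineering the specialization of $M$ so that $\ker M$ (or a deformation of it) is a controlled direct sum of twisted tangent/cotangent-type bundles, and checking that the resulting Chern character and rank match $\ch(V)$ precisely when $n\mid\rk V$. This is where the "divisible by $n$" hypothesis is used essentially, and getting the exact block structure — rather than just an asymptotic or numerical match — is the delicate step; the rest is semicontinuity and bookkeeping with Euler characteristics, together with Lemma~\ref{lem-basicbound} to reduce "natural cohomology" to finitely many twists.
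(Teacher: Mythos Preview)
Your outline for part (a) is essentially the paper's argument, though stated imprecisely. The reduction to finitely many twists is exactly Lemma~\ref{lem-basicbound}, which shows that only the two twists $k=\alpha-1$ and $k=\beta-1$ (with $\alpha=\lceil tn/r\rceil$, $\beta=\lfloor tn/r\rfloor$) need to be checked; these depend on the ratio $t/r$, not just on $n$ as you wrote, but they are independent of $m$, so Corollary~\ref{corOneLineBundle} applies once $m$ is large. Note also that outside the critical range it is not that ``one side vanishes'': for large $k$ both $H^0$ groups are nonzero, and the vanishing of $H^1$ propagates by Castelnuovo--Mumford regularity, while vanishing of $H^0$ propagates downward because $H^0(V(k-1))\subset H^0(V(k))$.

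Part (b) has a genuine gap. Your proposed building blocks $T\PP^n(-1)$ and $\Omega_{\PP^n}(1)$ are, via the Euler sequence, precisely the rank-$n$ Steiner bundles with parameter $t=1$. A direct sum of $k$ copies therefore realizes only the Chern character with $t=k$ and rank $kn$, i.e.\ the single slope $1/n$; you cannot match the Chern character of a rank-$kn$ Steiner bundle with arbitrary $t$ this way, so the specialization you describe does not exist in general. Separately, Corollary~\ref{cor-tangent} is the wrong citation: it concerns kernels of general maps $\Omega(d)^s\to T(e)^t$ under an \emph{asymptotic} hypothesis on $s,t$, not the cohomology of $\Omega(1)$ or $T(-1)$ themselves, and so cannot supply the non-asymptotic input (b) requires.

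The paper keeps your specialization-plus-semicontinuity strategy but uses different building blocks: rank-$n$ Steiner bundles with \emph{arbitrary} parameter $u$, i.e.\ kernels of general maps $\OO^{u+n}\to\OO(1)^u$. Writing $t=uk+v$ with $0\le v<k$, one specializes $V$ to $V_1^{\,k-v}\oplus V_2^{\,v}$ where $V_1,V_2$ have parameters $u$ and $u+1$, and then applies Lemma~\ref{lem-basicbound}. The substantial ingredient your proposal is missing is that every such rank-$n$ Steiner bundle has natural cohomology (Corollary~\ref{cor-rankn}); this is proved via the isomorphism $V\cong\Lambda^{n-1}(V^*)\otimes\det V$ and the resulting length-$n$ Weyman-type resolution of $V$ by direct sums of line bundles (Proposition~\ref{rankn}), an argument entirely unrelated to Kronecker stability or to Corollary~\ref{cor-tangent}.
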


In \S \ref{sec-stability}, we show that a general Steiner bundle $V$ is slope stable whenever it is not destabilized by an exceptional bundle. Combined with the work of Brambilla \cite{Bra05, Bra08} and Huizenga \cite{Hui13} characterizing stable exceptional Steiner bundles, our theorem gives a complete classification of Chern characters of stable Steiner bundles on $\PP^n$. Set $$\psi_n = \frac{n-1 + \sqrt{n^2+2n-3}}{2}.$$

\begin{theorem}[Theorem \ref{400}]\label{303}
Let $n\geq 2$, $r\geq n$, $t> 0$ be integers. Let $M$ be a general matrix of linear forms and let $V$ be the vector bundle on $\PP^n$ be defined by the sequence
\begin{equation*}
0\ra \OO_{\PP^n}(-1)^t\xrightarrow{M} \OO_{\PP^n}^{r+t}\ra V\ra 0.
\end{equation*}
If $\frac{n}{t} \leq \frac{r}{t} < \psi_n$, then $V$ is slope stable.
\end{theorem}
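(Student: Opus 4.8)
The plan is to prove slope stability by a cohomological criterion, using the maximal rank / natural cohomology results established earlier for general Steiner bundles together with the numerical hypothesis $\tfrac{n}{t} \le \tfrac{r}{t} < \psi_n$. Since $V$ is a general Steiner bundle of rank $r$ and $c_1(V) = t$, its slope is $\mu(V) = t/r$, which lies in $(0,1)$ precisely when $r \ge n > t$... but more relevantly, the condition $r/t < \psi_n$ is exactly the threshold beyond which a destabilizing subsheaf would have to come from an exceptional bundle; the point of the hypothesis is to rule this out. So the first step is to reduce stability to the nonexistence of a destabilizing map into $V$ from a bundle of controlled slope, and then to a cohomological vanishing statement. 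Concretely, I would argue that if $V$ is strictly semistable or unstable, there is a saturated subsheaf $F \subset V$ with $0 < \rk F < r$ and $\mu(F) \ge \mu(V)$; dualizing the Steiner sequence, $V^*$ sits in $0 \to V^* \to \OO^{r+t} \to \OO(1)^t \to 0$, so a quotient $V^* \onto Q$ with $\mu(Q) \le \mu(V^*) = -t/r$ must be analyzed.

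The key technical input is the behavior of cohomology of $V \te V^*$, or equivalently of twists $V(k)$ and $\sHom$-type bundles, under the generality assumption. I would use the following standard mechanism (as in the $\PP^2$ theory of prioritary sheaves, adapted here): a general Steiner bundle is a general point of an irreducible stack (the defining resolution being the general deformation), so it suffices to exhibit one Steiner bundle, or to compute with the general one, for which no destabilizing subsheaf exists. The cleanest route is Brambilla's / Hoppe-type criterion: $V$ is slope stable if $H^0(V(-k)) = 0$ and $H^0(V^*(-k)) = 0$... actually for stability we need $H^0((\wedge^p V)(k)) = 0$ for the appropriate range of $p$ and $k$ with $pk + $ (correction) $\le p\,\mu(V)$. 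Rather than Hoppe directly, I expect the actual argument restricts to a general line or hyperplane: restricting the Steiner sequence to a general line $\ell \cong \PP^1$ gives $0 \to \OO_\ell(-1)^t \to \OO_\ell^{r+t} \to V|_\ell \to 0$, and a general such map makes $V|_\ell$ balanced, i.e. $\OO_\ell^{r-t} \oplus \OO_\ell(1)^t$ or as balanced as possible; balanced restriction to lines, combined with control of $H^0$, forces stability via a Grauert–Mülich / Bogomolov-type argument. The hypothesis $r/t < \psi_n$ is precisely what guarantees that a would-be destabilizer cannot be accommodated without violating the maximal-rank property proven in the earlier theorems.

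The main obstacle, and the step I would spend the most care on, is handling \emph{exceptional} potential destabilizers and the boundary of the inequality $r/t < \psi_n$: the constant $\psi_n$ is the positive root of $x^2 - (n-1)x - (n-1) = 0$, which is exactly the slope-interval boundary coming from the Dr\'ezet--type inequality $\Delta \ge $ something governing when an exceptional bundle $E$ can map to $V$. So I would: (i) show any destabilizing subsheaf can be replaced by a $\mu$-semistable one, then by one which is either a twist of $\OO$, a Steiner-type bundle, or an exceptional bundle; (ii) rule out line-bundle and Steiner subsheaves using the natural-cohomology / maximal-rank results (a nonzero map $\OO(a) \to V$ with $a \ge \mu(V)$ would force $H^0(V(-a)) \ne 0$, contradicting the computed $h^0$); (iii) rule out exceptional destabilizers $E \hookrightarrow V$ by the numerical estimate: such an $E$ would have slope close to $\mu(V)$, and the discriminant inequality for exceptional bundles combined with $r/t < \psi_n$ makes $\hom(E,V) = 0$ for the relevant $E$, again by reducing to a cohomology computation for the general Steiner bundle. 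Step (iii) is where the precise value of $\psi_n$ enters and is the crux; everything else is a reduction or an application of the earlier maximal-rank theorems.
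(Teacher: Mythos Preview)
Your approach diverges from the paper's and has a genuine gap. The paper does \emph{not} use the earlier maximal-rank results at all; instead it proves stability by \emph{restriction to surfaces}. For $n\ge 3$ and $\tfrac{n+1}{2}<\tfrac{r}{t}<\psi_n$, it restricts $V$ to a general degree-$n$ rational normal surface scroll $S\subset\PP^n$ (a Hirzebruch surface $\FF_0$ or $\FF_1$); the restricted sequence $0\to\OO_S(-1)^t\to\OO_S^{r+t}\to V|_S\to 0$ is then handled by a prior result of Coskun--Huizenga on Hirzebruch surfaces, which gives $H$-stability of $V|_S$ in this range, and hence stability of $V$. For $2<\tfrac{r}{t}<\psi_n$ one first restricts to a suitable linear $\PP^m$ to land in the scroll range. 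Finally, for $\tfrac{r}{t}\le 2$ one restricts to a general $\PP^3$, then to a smooth quadric $Q\cong\PP^1\times\PP^1$, and invokes Abe's Dr\'ezet--Le Potier-type classification of stable sheaves with symmetric $c_1$ on $Q$; a direct computation shows $(\mu,\Delta)$ of $W=V|_Q$ lies in the stable region exactly when $r<(1+\sqrt{3})t$, i.e.\ when $\tfrac{r}{t}<\psi_3$. The case $n=2$ is Dr\'ezet--Le Potier.

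The specific problems with your outline are: (1) The natural-cohomology results in \S\ref{sec-SKSmaximalrank}--\ref{sec-rankn} are only asymptotic or for rank divisible by $n$; they do not give you $H^0(V(-a))=0$ for \emph{every} Steiner $V$ in the stated range, so step (ii) is unsupported. (2) Your reduction in step (i) --- that a destabilizer may be taken to be a line bundle, a Steiner bundle, or an exceptional bundle --- is unjustified on $\PP^n$ for $n\ge 3$; there is no Dr\'ezet--Le Potier classification of stable sheaves on $\PP^n$, so you cannot enumerate potential destabilizers this way. (3) The Hoppe criterion would require controlling $H^0(\wedge^p V(k))$ for all $p$, which the paper never does and which is substantially harder than what is available. (4) Balanced restriction to lines and Grauert--M\"ulich give at best semistability-type information, not slope stability. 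The essential missing idea is to restrict to a surface where the stable locus is already classified and then verify the numerical inequality there; that is where $\psi_n$ actually enters.
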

Stability of kernel and cokernel bundles has been an area of intense study. Previously,  Bohnhorst and Spindler \cite{BS92} gave precise criteria under which the rank $n$ cokernel of an injective map of direct sums of line bundles  on $\PP^n$ is stable. Cascini \cite{Cas02} proves Theorem \ref{303} in the specific case $r=n$ and $t=2$.
Coand\u{a} \cite{Coa11} and Costa, Marques, Mir\'{o}-Roig \cite{CMM10, MM11} have characterized the stability of syzygy bundles defined as the kernel of a map $\OO_{\PP^n}(-d)^s \ra \OO_{\PP^n}$. Our results imply the semistability  of certain kernels of general maps $\OO_{\PP^n}(-d)^s \to \OO_{\PP^n}^t$ (see Corollary \ref{cor-semistabilityd}).

Finally, we prove the following result about ampleness of Steiner bundles.
\begin{proposition}[Proposition \ref{prop-ample}]\label{314}
Let $V$ be a general Steiner bundle given by the presentation (\ref{eq-steiner}). If $t-r>2n-3$, then $V$ is ample.
\end{proposition}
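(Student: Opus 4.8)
The plan is to prove ampleness of a general Steiner bundle $V$ by passing to its projectivization and using the criterion that $V$ is ample if and only if $\OO_{\PP(V)}(1)$ is ample, or equivalently that every quotient line bundle of $V$ restricted to any curve has positive degree — but more practically, I would use the fact that a quotient of an ample bundle is ample together with a restriction argument. Specifically, the dual sequence reads
\[
0 \to V^* \to \OO_{\PP^n}^{t+r} \to \OO_{\PP^n}(1)^t \to 0,
\]
so $V$ is ample iff $V^*$ has the property that on every subvariety the appropriate positivity holds. Rather than attack this directly, the cleanest route is: restrict everything to a general line $\ell \cong \PP^1$, show the restriction $V|_\ell$ is ample (i.e. all its splitting summands $\OO_{\PP^1}(a_i)$ have $a_i > 0$), and then invoke the theorem that a vector bundle on a projective variety is ample if its restriction to every curve is ample — but that theorem is false in general, so instead I would use the characterization via symmetric powers or, better, the following: $V$ is ample iff for the general such bundle the required cohomological vanishings hold. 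The honest approach is to use Le Potier's / the standard criterion: $V$ is ample iff $\OO_{\PP(V)}(1)$ is nef and big and positive on curves; one reduces to checking that $\OO_{\PP(V)}(1)$ intersects every curve positively, and curves in $\PP(V)$ either map finitely to $\PP^n$ (handled by restriction to curves in $\PP^n$) or lie in a fiber (handled trivially since $\OO(1)$ is a hyperplane there).

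So the key steps, in order, are as follows. First, I would reduce to showing that $V|_C$ is ample for every irreducible curve $C \subset \PP^n$; this uses the fact that $\OO_{\PP(V)}(1)$ is ample iff it is positive on every curve in $\PP(V)$ (Nakai–Moishezon on $\PP(V)$, after checking the self-intersection numbers, OR the cleaner Gieseker-type statement), and that curves in $\PP(V)$ project either to a point or to a curve in $\PP^n$. Second, for a curve $C$, I would pull back the defining sequence
\[
0 \to \OO_{\PP^n}(-1)^t \to \OO_{\PP^n}^{t+r} \to V \to 0
\]
to $C$: since $V$ is locally free, $V|_C$ is a quotient of $\OO_C^{t+r}$ by a subsheaf isomorphic to a subsheaf of $\OO_C(-1)^t$ whose quotient by it is still $V|_C$ — and then I would argue that $V|_C$ being ample follows from showing every quotient line bundle of $V|_C$ has sufficiently positive degree, which I control using the degree bound $t - r > 2n-3$. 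Third, the numerical heart: normalize the Chern classes. We have $c_1(V) = t$ (as a multiple of the hyperplane class), $\rk V = r$. For a line $\ell$, if $V|_\ell = \bigoplus \OO(a_i)$ then $\sum a_i = t$; I would bound the smallest $a_i$ from below using the vanishing $H^0(V^*(-1)|_\ell) \hookleftarrow$ cohomology of the restricted sequence together with genericity of $M$, showing that for general $M$ the splitting type on a general line is balanced, so $a_i \geq \lfloor t/r \rfloor \geq 1$ provided $t > r$ — but $t > r$ is weaker than $t - r > 2n-3$, so the extra room is what's needed to handle \emph{special} lines and higher-dimensional subvarieties, and to control the genericity on all curves simultaneously rather than just the general line.

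I expect the main obstacle to be exactly this last point: passing from "the restriction to a \emph{general} line is ample" (which is essentially immediate from balancedness of the general Steiner bundle's splitting type, itself a consequence of the maximal rank / natural cohomology results proved earlier in the paper, or of Bohnhorst–Spindler-type arguments) to "the restriction to \emph{every} curve is ample." The standard tool here is a result along the lines of: if $V$ is a bundle such that $V(-1)$ (or some twist) is globally generated, then quotients of $V|_C$ have degree bounded below in terms of $c_1$ and the twist, uniformly in $C$. Concretely, the hypothesis $t - r > 2n - 3$ is presumably calibrated so that $V^*(1) = V^* \otimes \OO(1)$ has no higher cohomology and is globally generated with enough sections that the evaluation map stays surjective after restriction to any curve, forcing the dual bound on quotients; I would therefore spend most of the effort establishing a global generation / vanishing statement for an appropriate twist of $V$ (likely $V$ itself, or $V(-1)$), valid for the general Steiner bundle, using the exact sequence (\ref{eq-steiner}) and the cohomology-vanishing machinery of Theorem \ref{mainCohomology}, and then deduce ampleness formally from that. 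An alternative, possibly shorter route: show directly that $\Sym^k V$ is globally generated for all $k \gg 0$ by taking symmetric powers of (\ref{eq-steiner}) and chasing the resulting complex, where the bound $2n-3$ enters to guarantee the relevant $\Ext$/cohomology terms vanish; this avoids curves entirely but requires care with the (non-exact) symmetric power complex.
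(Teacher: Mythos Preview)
Your proposal correctly identifies the crux---passing from ``ample on a general line'' to ``ample on every curve''---but does not find the mechanism that bridges it, and the alternatives you float (global generation of a twist of $V$, symmetric-power complexes, cohomology vanishing from Theorem~\ref{mainCohomology}) do not visibly produce the specific bound $t-r>2n-3$ and would require substantial further work even to make precise.

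The paper's argument is short and rests on two points you are missing. First, since $V$ is globally generated, Gieseker's Lemma says $V$ fails to be ample exactly when some curve $C\subset\PP^n$ admits a trivial quotient of $V|_C$, i.e.\ $H^0(C,V^*|_C)\neq 0$. A nonzero section of $V^*|_C$ is a nonzero constant vector $v\in\CC^{t+r}$ in the kernel of $M^\ast|_C:\OO_C^{t+r}\to\OO_C(1)^t$. But the entries of $M$ are \emph{linear} forms, so the vanishing of $M^\ast v$ along $C$ forces it along the linear span of $C$, and in particular along some line $\ell\subset\PP^n$. Hence failure of ampleness on any curve implies failure on a line: the problem reduces entirely to lines, with no need to control higher-genus or higher-degree curves separately.

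Second, the bound then drops out of a dimension count. For a fixed line $\ell$ and a fixed nonzero $v$, the condition $M^\ast v|_\ell=0$ imposes $2t$ linear conditions on $M$ (since $h^0(\OO_\ell(1))=2$); varying $v$ over $\PP^{t+r-1}$, the locus of $M$ for which $V|_\ell$ has a trivial quotient has codimension at least $t-r+1$ in $\Hom(\OO_{\PP^n}(-1)^t,\OO_{\PP^n}^{t+r})$. The Grassmannian of lines has dimension $2n-2$, so the locus of $M$ for which $V$ fails to be ample on \emph{some} line has codimension at least $(t-r+1)-(2n-2)=t-r-2n+3$. When $t-r>2n-3$ this is positive and the general $M$ avoids it. This is where the $2n-3$ comes from; your proposal never isolates a parameter count that would yield it.
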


\subsection*{Organization of the paper} In \S \ref{sec-SKSmaximalrank}, we introduce the notion of strongly Kronecker stable pairs of bundles and show that line bundles $\OO_{\PP^n}(i), \OO_{\PP^n}(j)$ for $0\leq i < j$ form a strongly Kronecker stable pair. We show that strong Kronecker stability implies asymptotic maximal rank and prove Conjecture \ref{conj-steiner} asymptotically. In \S \ref{sec-SKSmutation}, we show how to generate strongly Kronecker stable pairs via mutation, increasing the applicability of maximal rank statements in \S \ref{sec-SKSmaximalrank} greatly. In \S \ref{sec-rankn}, we study rank $n$ Steiner bundles and prove Conjecture \ref{conj-steiner} for Steiner bundles of rank divisible by $n$. In \S \ref{sec-stability} we completely settle the slope stability of Steiner bundles. In \S \ref{sec-ample}, we study ampleness for Steiner bundles and related cokernel bundles and prove Proposition \ref{prop-ample}.

\subsection*{Acknowledgments} We would like to thank Benjamin Gould, Yeqin Liu, and John Kopper for invaluable discussions.

\section{Strong Kronecker stability and maximal rank}\label{sec-SKSmaximalrank}
In this section, we introduce the notion of strongly Kronecker stable pairs of bundles and prove a criterion for a general map between two direct sums of line bundles to have maximal rank on cohomology. Our criterion applies more generally to maps between direct sums of vector bundles that are strongly Kronecker stable. We will generate other strongly Kronecker stable pairs in the next section.

\begin{definition}
Let $E_1$ and $E_2$ be two coherent sheaves on $\PP^n$. We say that the pair $E_1, E_2$ is {\em strongly Kronecker stable} and write  $E_1 < E_2$   if 
\begin{enumerate}
\item  $H^0(E_2)\neq 0$, 
\item the evaluation map $\ev: H^0(E_1)\otimes \Hom(E_1,E_2) \ra H^0(E_2)$ is surjective, and 
\item for any subspace $0\neq V\subsetneq H^0(E_1)$,  $$\frac{\dim(\ev(V\otimes \Hom(E_1,E_2)))}{h^0(E_2)}> \frac{\dim(V) }{h^0(E_1)}.$$  
\end{enumerate}
\end{definition}

\begin{remark}
Let $N= \hom(E_1, E_2)$ and let $K_{N}$ be the Kronecker quiver with two vertices and $N$ arrows. Condition (c) implies that the Kronecker module that associates the vector spaces $H^0(E_1)$ and $H^0(E_2)$ to the vertices of $K_{N}$ and the canonical evaluation to the arrows is a stable Kronecker module.
\end{remark}

\subsection{Kronecker stability of line bundles}
Our first result in this section shows that pairs of line bundles on $\PP^n$ are strongly Kronecker stable. 

\begin{theorem}\label{prop-linebundle}
For any integers $i \geq 0$ and $j >0$, $\OO_{\PP^n}(i)<\OO_{\PP^n}(i+j)$.
\end{theorem}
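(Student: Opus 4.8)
The plan is to turn everything into a statement about multiplication of polynomials. Write $\PP^n=\PP(U)$ and $S=\Sym U^*$, so that $H^0(\OO_{\PP^n}(d))=S_d$, $\Hom(\OO_{\PP^n}(i),\OO_{\PP^n}(i+j))=S_j$, and the evaluation map of the definition becomes the multiplication map $S_i\otimes S_j\to S_{i+j}$. Condition (a) holds because $S_{i+j}\neq 0$ for $i+j\geq 1$, and condition (b) holds because every monomial of degree $i+j$ is a product of a monomial of degree $i$ and one of degree $j$. Setting $h_d=\dim S_d=\binom{d+n}{n}$, condition (c) becomes the assertion that for every subspace $0\neq W\subsetneq S_i$,
$$\dim(S_j\cdot W)\cdot h_i\ >\ \dim W\cdot h_{i+j};$$
that is, a nonzero proper linear system is magnified by strictly more than the generic factor $h_{i+j}/h_i$ upon multiplication by $S_j$.

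The first step is a reduction to lex-segments. Fix an order $x_0>\dots>x_n$ on the coordinates of $S$. Macaulay's theorem asserts that among all subspaces of $S_i$ of a given dimension, the lex-segment $L$ (spanned by the lexicographically largest monomials of degree $i$) minimizes $\dim(S_1\cdot W)$; since $S_1\cdot L$ is again a lex-segment and $\dim(S_1\cdot L)$ is monotone in $\dim L$, iterating gives $\dim(S_j\cdot W)\geq\dim(S_j\cdot L)$ for all $j\geq 0$. It therefore suffices to prove the displayed inequality when $W=L$ is a lex-segment.

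For lex-segments I would argue by induction on $n+i$; the cases $i=0$ and $n=0$ are vacuous, as then $S_i$ is one-dimensional. For the inductive step, note that the $h_{i-1}$ lex-largest monomials of degree $i$ are precisely those divisible by $x_0$. If $1\leq\dim L\leq h_{i-1}$, then $L=x_0L'$ for a lex-segment $L'\subseteq S_{i-1}$ and $S_j\cdot L=x_0(S_j\cdot L')$, so the inductive hypothesis applied to $L'$ (or a direct computation when $L'=S_{i-1}$) reduces us to the inequality $h_{i+j-1}/h_{i-1}>h_{i+j}/h_i$, which holds because $h_d/h_{d-1}=(d+n)/d$ is strictly decreasing in $d$. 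If $h_{i-1}<\dim L<h_i$, write $T=k[x_1,\dots,x_n]$, let $\bar h_d=\dim T_d$, and decompose $L=x_0S_{i-1}\oplus L''$ with $L''$ a lex-segment in $T_i$ of dimension $\dim L-h_{i-1}$, which is strictly between $0$ and $\bar h_i$. Using $S_j=x_0S_{j-1}\oplus T_j$ one finds $S_j\cdot L=x_0S_{i+j-1}\oplus(T_j\cdot L'')$, hence $\dim(S_j\cdot L)=h_{i+j-1}+\dim(T_j\cdot L'')$ and $\dim L=h_{i-1}+\dim L''$. By Pascal's rule $h_d=h_{d-1}+\bar h_d$, the difference of the two sides of the target inequality equals $D+\bigl(\dim(T_j\cdot L'')\,h_i-\dim L''\,h_{i+j}\bigr)$, where $D:=h_{i+j-1}\bar h_i-h_{i-1}\bar h_{i+j}$ is positive by the same monotonicity; applying the inductive hypothesis on $\PP^{n-1}$ to $L''\subseteq T_i$ bounds the bracketed term below by $-\dim L''\cdot D/\bar h_i$, so the total exceeds $D(1-\dim L''/\bar h_i)>0$.

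The only external ingredient is Macaulay's theorem; the rest is a double induction on $(n,i)$ whose engine is the single observation that $d\mapsto(d+n)/d$ is decreasing. I expect the main obstacle to be the bookkeeping in the second case of the inductive step: correctly identifying the direct-sum decomposition $S_j\cdot L=x_0S_{i+j-1}\oplus(T_j\cdot L'')$ and checking that the two binomial contributions balance in the right direction. It is also worth isolating the general-position/Macaulay reduction as a standalone estimate, since the same "multiply a linear system by $S_j$" phenomenon recurs when verifying strong Kronecker stability for exceptional bundles in the later sections.
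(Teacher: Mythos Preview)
Your argument is correct. Both your proof and the paper's reduce condition (c) to Macaulay's theorem, so the essential external input is the same; the difference lies in how the residual binomial inequality is established. The paper works in codimension, invoking Macaulay's bound in the form $\codim(S_1\cdot W)\leq(\codim W)^{\langle i\rangle}$ and then proving the numeric inequality $c^{\langle d\rangle}<\frac{k+1}{d+1}c$ by a single induction on $d$ using the Macaulay representation of $c$. You instead work in dimension, pass to the extremal lex-segment, and analyze it via the decomposition $L=x_0L'$ or $L=x_0S_{i-1}\oplus L''$, which yields a double induction on $(n,i)$. The paper's packaging is a bit leaner (one induction, no case split), while yours is more explicitly combinatorial and makes the structure of the extremal case visible; the underlying content—comparing ratios like $h_{i+j-1}/h_{i-1}$ to $h_{i+j}/h_i$ via the monotonicity of $(d+n)/d$—is the same. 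One small point worth recording explicitly in a write-up: in Case~2 you use that $S_j\cdot(x_0S_{i-1})$ already fills up all of $x_0S_{i+j-1}$ (this follows from $S_j=x_0S_{j-1}\oplus T_j$ and $T_jS_{i-1}\supseteq T_{i+j-1}$), and that $D=h_{i+j-1}\bar h_i-h_{i-1}\bar h_{i+j}$ equals $h_{i+j-1}h_i-h_{i-1}h_{i+j}$ by Pascal, which is what makes the ``same monotonicity'' remark go through.
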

\begin{proof}
Since $i+j > 0$, $H^0(\OO_{\PP^n}(i+j)) \neq 0$ and the evaluation map $$\ev: H^0(\OO_{\PP^n}(i)) \otimes \Hom(\OO_{\PP^n}(i), \OO_{\PP^n}(i+j)) \ra H^0(\OO_{\PP^n}(i+j))$$ given by multiplication of polynomials of degree $i$ with polynomials of degree $j$ is  surjective. Hence, Theorem \ref{prop-linebundle} is an immediate consequence of the following lemma. \end{proof}

\begin{lemma}\label{lemma}
Set $V_i=H^0(\PP^n,\OO(i))$, and let $U\subset V_i$ be a vector subspace. Let $U_j$ be the image of the multiplication map $U\otimes V_j\ra V_{i+j}$. Then, if $0\subsetneq U\subsetneq V_i$, 
\[
\mathrm{dim}(U_j)> \dim(U)\frac{h^0(\OO_{\PP^n}(i+j))}{h^0(\OO_{\PP^n}(i))}.
\]
\end{lemma}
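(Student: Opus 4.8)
The plan is to read the statement as a Macaulay--type growth estimate for homogeneous ideals. Let $S=\bigoplus_{m}V_m=k[x_0,\dots,x_n]$ be the homogeneous coordinate ring of $\PP^n$, let $I=(U)\subseteq S$ be the ideal generated by $U$, and set $R=S/I$; since $U$ lives in degree $i$ we have $I_i=U$ and $I_{i+j}=U_j$. Writing $h(m)=h^0(\OO_{\PP^n}(m))$, a short computation turns the asserted inequality into
\[
\frac{\dim R_{i+j}}{h(i+j)}<\frac{\dim R_i}{h(i)},
\]
i.e. into the strict decrease of the fractional Hilbert function $m\mapsto\dim R_m/h(m)$ between $m=i$ and $m=i+j$; the hypothesis $0\subsetneq U\subsetneq V_i$ is exactly $0<\dim R_i<h(i)$. (Here $j\geq 1$, as in Theorem \ref{prop-linebundle}; for $j=0$ the inequality is false.)

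The first step is to reduce to the one--step assertion: if $R=S/I$ satisfies $0<\dim R_d<h(d)$, then $\dim R_{d+1}<\tfrac{n+d+1}{d+1}\dim R_d$, equivalently $\dim R_{d+1}\,h(d)<\dim R_d\,h(d+1)$. Granting this, the lemma follows by telescoping: since $S$ is a domain and $I_i=U\neq 0$, we have $I_m\neq 0$ and hence $\dim R_m<h(m)$ for all $m\geq i$; if $R_m=0$ for some $i\leq m\leq i+j-1$, then $\dim R_{i+j}=0$ and the strict inequality holds because $\dim R_i>0$; otherwise the one--step bound applies at each of $m=i,\dots,i+j-1$, and the factors $h(m+1)/h(m)=(n+m+1)/(m+1)$ multiply to $h(i+j)/h(i)$.

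For the one--step assertion the plan is to invoke Macaulay's bound $\dim R_{d+1}\leq(\dim R_d)^{\langle d\rangle}$, reducing everything to the combinatorial inequality $c^{\langle d\rangle}\binom{n+d}{n}<c\binom{n+d+1}{n}$ for $1\leq c\leq\binom{n+d}{n}-1$. In the Macaulay expansion $c=\binom{a_d}{d}+\cdots+\binom{a_\delta}{\delta}$, the constraint $c<\binom{n+d}{d}$ forces $a_d\leq n+d-1$, hence $a_l\leq n+l-1$ for every $l$; with the identities $\binom{a_l+1}{l+1}-\binom{a_l}{l}=\binom{a_l}{l+1}$ and $\binom{n+d}{d+1}=\tfrac{n}{d+1}\binom{n+d}{d}$ one sees that the ``full'' value $c=\binom{n+d}{d}$ gives equality and the top Macaulay term already gives strict inequality, the extremal case being $c=\binom{n+d}{n}-1$, where all $a_l=n+l-1$ and $c^{\langle d\rangle}=\binom{n+d+1}{n}-n-1$. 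An alternative that avoids Macaulay is an induction on $n$ using a general hyperplane $\{\ell=0\}$: with $\bar R=R/\ell R$, a graded quotient of $k[x_0,\dots,x_{n-1}]$, the exact sequence $0\to K_d\to R_d\xrightarrow{\ell}R_{d+1}\to\bar R_{d+1}\to 0$ gives $\dim R_{d+1}=\dim R_d-\dim K_d+\dim\bar R_{d+1}$; the case $\bar R_{d+1}=0$ (which includes $\bar R_d=0$) is immediate, and otherwise a general $\ell$ does not divide a fixed nonzero form of $I_d$, so $0<\dim\bar R_d<h_{\PP^{n-1}}(d)$ and the inductive hypothesis bounds $\dim\bar R_{d+1}$ in terms of $\dim\bar R_d=\dim R_d-\dim R_{d-1}+\dim K_{d-1}$. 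The base case $n=1$ is classical: $\dim U_j\geq\dim U+j$ for binary forms, and $\dim U+j>\dim U\cdot\tfrac{i+j+1}{i+1}$ is equivalent to $\dim U\leq i=\dim V_i-1$.

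The main obstacle is the one--step assertion. In the Macaulay approach it is the combinatorial inequality uniformly in $c$: a termwise comparison of the binomials fails, because $c\mapsto c^{\langle d\rangle}/c$ is not monotone, so one must use that the top term $\binom{a_d}{d}$ dominates the sum and estimate the lower--order contributions against it. In the hyperplane approach the obstacle is instead combining the $\PP^{n-1}$ bound on $\dim\bar R_{d+1}$ with the correction terms $\dim R_d$, $\dim K_d$, $\dim\bar R_d$ tightly enough to beat the ratio $(n+d+1)/(d+1)$ --- a naive split loses too much --- so one is led to a simultaneous induction on $n$ and on $d$, feeding in the one--step assertion in degree $d-1$ on $\PP^n$ as well.
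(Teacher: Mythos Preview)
Your reduction is exactly the paper's: pass to the codimension $c=\dim R_d$ in $S_d$, invoke Macaulay's bound $\dim R_{d+1}\le c^{\langle d\rangle}$, and telescope the one--step estimate. The gap is that you never actually prove the one--step combinatorial inequality
\[
c^{\langle d\rangle}<\frac{n+d+1}{d+1}\,c\qquad\text{for }1\le c<\binom{n+d}{d},
\]
and you correctly diagnose why your two sketched attacks stall: termwise comparison fails because $c\mapsto c^{\langle d\rangle}/c$ is not monotone, and the hyperplane recursion loses too much in a naive split. The ``extremal case'' observation you make (that $c=\binom{n+d}{d}-1$ gives $c^{\langle d\rangle}=\binom{n+d+1}{d+1}-(n+1)$) checks one value but does not bound the ratio for all $c$.

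The missing idea is a short induction on $d$ (not on $n$). Phrase the inequality uniformly as: for all $k\ge d$ and $1\le c<\binom{k}{d}$, one has $c^{\langle d\rangle}<\tfrac{k+1}{d+1}c$. The base $d=1$ is $c^{\langle 1\rangle}=\tfrac{c(c+1)}{2}<\tfrac{k+1}{2}c$, which is just $c<k$. For the step, it suffices to treat the unique $k$ with $\binom{k-1}{d}\le c<\binom{k}{d}$; write $c=\binom{k-1}{d}+c'$ with $0\le c'<\binom{k-1}{d-1}$, so that $c^{\langle d\rangle}=\binom{k}{d+1}+(c')^{\langle d-1\rangle}$. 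Apply the inductive bound $(c')^{\langle d-1\rangle}<\tfrac{k}{d}c'$, then use $\binom{k}{d+1}=\tfrac{k}{d+1}\binom{k-1}{d}$ and the constraint $(k-d)c'<d\binom{k-1}{d}$ to absorb the error. This two--line induction closes the only gap; everything else in your outline is correct and matches the paper.
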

Lemma \ref{lemma} in turn follows from a theorem Green attributes to Macaulay \cite{Gre98}.
\begin{theorem}[Macaulay's bound on Hilbert functions]
Let $S$ be the homogeneous coordinate ring of $\PP^n$ and $I$ a homogeneous ideal of $S$. Suppose $I_d$ has codimension $c$ in $S_d$, where $c$ is a positive integer. Let $k_d>k_{d-1}>\cdots >k_\delta\geq\delta\geq 1$  be the uniquely determined integers such that
\[
c=\binom{k_d}{d}+\binom{k_{d-1}}{d-1}+\cdots +\binom{k_\delta}{\delta}.
\]
Set 
\[
c^{\langle d \rangle}=\binom{k_d+1}{d+1}+\binom{k_{d-1}+1}{d}+\cdots +\binom{k_\delta+1}{\delta+1}
\]
Then $I_{d+1}$ has codimension at most $c^{\langle d\rangle}$ in $S_{d+1}$.
\end{theorem}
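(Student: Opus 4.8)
This is a classical combinatorial result about homogeneous ideals, and I would deduce it from Macaulay's characterization of Hilbert functions together with the theory of lex ideals (lexicographic ideals), following the approach Green presents via compression. The key conceptual input is that among all homogeneous ideals whose degree-$d$ piece has a prescribed codimension $c$ in $S_d$, the \emph{lexicographic ideal} minimizes the growth of the ideal from degree $d$ to degree $d+1$ (equivalently, maximizes the codimension $c^{\langle d\rangle}$ in degree $d+1$ among all such ideals). Once one knows the extremal object is the lex ideal, the stated bound becomes a direct binomial computation.

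\textbf{The first step} is to reduce to the case of monomial ideals. Given a homogeneous ideal $I$ with $\dim_k(S_d/I_d) = c$, I would pass to the initial ideal $\init(I)$ with respect to a term order; since taking initial ideals preserves Hilbert functions in each degree, $\init(I)$ has the same codimension $c$ in degree $d$, and it suffices to bound the codimension of its degree-$(d+1)$ component. This replaces an arbitrary ideal by a monomial one without changing the numbers $c$ and $c^{\langle d\rangle}$ we care about.

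\textbf{The second, main step} is the compression/lex argument. I would invoke the Kruskal--Katona-type statement that, among all subsets of monomials of degree $d$ in $S$ spanning a vector space of the given codimension $c$, the lexicographic set produces the slowest-growing image under multiplication by the variables (this is precisely the monomial-ideal incarnation of Macaulay's theorem). Writing $c$ in its unique Macaulay representation $c = \binom{k_d}{d} + \binom{k_{d-1}}{d-1} + \cdots + \binom{k_\delta}{\delta}$ with $k_d > k_{d-1} > \cdots > k_\delta \geq \delta \geq 1$, one computes directly that for the lex ideal the codimension in degree $d+1$ equals $c^{\langle d\rangle} = \binom{k_d+1}{d+1} + \binom{k_{d-1}+1}{d} + \cdots + \binom{k_\delta+1}{\delta+1}$. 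Combining this with the extremality of lex then yields that an arbitrary $I$ satisfies $\dim_k(S_{d+1}/I_{d+1}) \leq c^{\langle d\rangle}$, which is the assertion. \textbf{The hard part} is establishing the extremality of the lexicographic ideal, i.e.\ that lex ideals maximize the codimension in the next degree; this is the genuine combinatorial content and is typically proved either by Macaulay's original inductive compression argument or by a Kruskal--Katona shifting argument on the monomial basis. Since this is a well-documented classical theorem, I would cite Green's lecture notes \cite{Gre98} for the full combinatorial proof rather than reproduce it, and restrict the written argument to the reduction to monomial ideals and the binomial bookkeeping that converts the lex-ideal statement into the displayed formula.
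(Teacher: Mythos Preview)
The paper does not prove this theorem; it simply states it as a classical result attributed to Macaulay and cites Green's lecture notes \cite{Gre98}, then immediately applies it to deduce Lemma~\ref{lemma}. Your proposal sketches the standard modern argument (reduce to monomial ideals via initial ideals, then invoke the extremality of lex ideals), which is correct and indeed is essentially the approach Green presents; since you, too, ultimately defer the hard combinatorial step to \cite{Gre98}, your treatment and the paper's are aligned, with yours simply being more expansive about what lies behind the citation.
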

\begin{proof}[Proof of Lemma \ref{lemma}]
We claim that for all $k\geq d$ and $\binom{k}{d}> c\geq 1$, we have
\begin{equation}\label{indHyp}
c^{\langle d\rangle} < \frac{k+1}{d+1}c.
\end{equation}
 We prove this by induction on $d$. If $d=1$, we have $c^{\langle 1 \rangle} = \frac{c(c+1)}{2} < \frac{k+1}{2} c$ for  $c<k=\binom{k}{1}$. Suppose that the bound holds for all $d'< d$. It suffices to prove the inequality for the integer $k$ such that $\binom{k-1}{d}\leq  c< \binom{k}{d}$ as increasing $k$ only improves the inequality. Then we have 
\[
c=\binom{k-1}{d}+c',
\]
where $c'$ is an integer satisfying $0\leq c'<\binom{k-1}{d-1}$. We then have
\[
c^{\langle d\rangle}=\binom{k}{d+1}+c'^{\langle d-1\rangle}.
\]
By the induction hypothesis, we then have
\[
c^{\langle d\rangle}<\binom{k}{d+1}+ \frac{k}{d} c'.
\]
Since $c'<\binom{k-1}{d-1}$ rearranges to $(k-d)c'<d\binom{k-1}{d}$, we conclude that
$$c^{\langle d\rangle}<\binom{k-1}{d}\frac{k}{d+1}+ \frac{k}{d}c' <\frac{k+1}{d+1}c +\frac{k-d}{d(d+1)}c'-\frac{1}{d+1}\binom{k-1}{d}< \frac{k+1}{d+1}c$$
as desired.

To complete the proof, let $U\subset V_i$  have codimension $c$ distinct from $0$ or $\mathrm{dim}(V_i)$.  Then $U_j\subset V_{i+j}$ has codimension at most $(\cdots ((c^{\langle i\rangle})^{\langle i+1 \rangle})\cdots)^{\langle i+j-1\rangle}$. Repeatedly applying (\ref{indHyp}) gives
\[
\mathrm{codim}(U_j)<\mathrm{codim}(U)\frac{n+i+1}{i+1}\cdots \frac{n+i+j}{i+j} = \mathrm{codim}(U)\frac{\binom{i+j+n}{i+j}}{\binom{i+n}{i}}.
\]
Hence, $$\dim(U_j) > \dim(U) \frac{h^0(\OO_{\PP^n}(i+j))}{h^0(\OO_{\PP^n}(i))},$$ as desired.
\end{proof}

\subsection{Kronecker stability and maximal rank}
The significance of the relation $E<F$ arises from the fact that it translates to a maximal rank statement on maps of global sections. Our main  theorem in this section is the following.

\begin{theorem}\label{mainCohomology}
Let $E_1,\ldots, E_a$ and $F_1,\ldots, F_b$ be coherent sheaves on $\PP^n$. Assume $E_i < F_j$ for all $i,j$.  Let $s_1,\ldots,s_a,t_1,\ldots,t_b$ be nonnegative integers, and let $ V_1=\bigoplus_{i=1}^a E_i^{s_i}$ and  $V_2= \bigoplus_{j=1}^b F_j^{t_j}$.  Assume that $$\max \{ h^0(V_1) , h^0(V_2)\}  \geq \frac{1}{4}\left(\max_{i} h^0(E_i)\right)\left(\max_{j} h^0(F_j)\right)\sum_{i=1}^a h^0(E_i)^2.$$  Let $M \in \Hom(V_1, V_2)$ be a general map. Then the induced map $M: H^0(V_1) \ra H^0(V_2)$ has maximal rank.
\end{theorem}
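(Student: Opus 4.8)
The plan is to prove the maximal rank statement by an induction/degeneration argument that reduces the general map $M$ to a block-diagonal-like degenerate map whose rank we can control via the strong Kronecker stability hypothesis. Since rank is lower semicontinuous, it suffices to exhibit a single map $M_0 \in \Hom(V_1,V_2)$ whose induced map on $H^0$ has rank at least $\min\{h^0(V_1),h^0(V_2)\}$; combined with the obvious upper bound, this forces the general map to have maximal rank. The natural candidate for $M_0$ is a map assembled from the canonical evaluation maps: writing each summand of $V_1$ as $E_i^{s_i}$ and each summand of $V_2$ as $F_j^{t_j}$, a block of $M_0$ between $E_i$ and $F_j$ is built from elements of $\Hom(E_i,F_j)$, and on global sections it realizes exactly the evaluation maps $H^0(E_i)\otimes \Hom(E_i,F_j)\to H^0(F_j)$ whose surjectivity and stability we are handed by the relation $E_i<F_j$.

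The key steps, in order, are: (1) reduce to the case of a single sheaf on each side, i.e. handle the ``Kronecker module'' picture $H^0(E)^{\oplus s}\to H^0(F)^{\oplus t}$ with the canonical evaluation arrows, by showing a general map in the larger setup restricts to a general-enough map in each $E_i$-to-$F_j$ block and that one can add up the ranks; (2) for the single-sheaf case, translate the problem into a statement about a general Kronecker module with underlying vector spaces $H^0(E)$ (dimension $h^0(E)$) and $H^0(F)$ (dimension $h^0(F)$) and $N=\hom(E,F)$ arrows, equipped with the stable submodule structure coming from condition (c), and invoke the standard fact that a general Kronecker module of this shape has maximal rank provided the numerical/dimension inequality in the hypothesis holds — this is where the constant $\tfrac14(\max_i h^0(E_i))(\max_j h^0(F_j))\sum_i h^0(E_i)^2$ enters, presumably as the bound ensuring the Kronecker quiver representation variety is large enough for the generic rank to be maximal; (3) bookkeeping: check that the evaluation-built map $M_0$ actually has the claimed generic behavior, using condition (b) (surjectivity of $\ev$) to get that in the ``more rows'' case $H^0(V_1)\to H^0(V_2)$ is surjective, and condition (c) (slope-stability of the Kronecker module) to rule out a kernel that is too big in the other case.

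I expect the main obstacle to be step (2): making precise and proving that strong Kronecker stability of the pairs $E_i<F_j$, together with the stated numerical inequality, forces the general map $M$ to have maximal rank on $H^0$. The stability condition (c) is exactly a slope-stability condition on a Kronecker module, so the heart of the matter is a statement of the form ``a general representation of the Kronecker quiver $K_N$ with dimension vector $(p,q)$ that dominates a fixed stable subrepresentation has maximal rank'' — and controlling this requires a dimension count comparing $\dim\Hom(V_1,V_2)=\sum_{i,j} s_i t_j \hom(E_i,F_j)$ against the codimension of the non-maximal-rank locus in $\Hom(H^0(V_1),H^0(V_2))$, i.e. the locus of matrices of rank $<\min\{h^0(V_1),h^0(V_2)\}$, whose codimension is $|h^0(V_1)-h^0(V_2)|+1$. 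One must show the image of $\Hom(V_1,V_2)$ in $\Hom(H^0(V_1),H^0(V_2))$ is not contained in this determinantal locus, and the role of the hypothesis $\max\{h^0(V_1),h^0(V_2)\}\geq \tfrac14(\cdots)$ is to guarantee enough multiplicity $s_i$ or $t_j$ that the Kronecker module is ``stable and of maximal rank'' generically; a clean way to organize this is an induction on $\sum s_i + \sum t_j$ that peels off one copy of some $E_i$ or $F_j$ at a time, using stability to maintain the inequality, with the base case being the genuinely Kronecker-theoretic maximal rank lemma.
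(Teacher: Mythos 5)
Your proposal does not contain a proof; it contains a plan whose central step is deferred to a ``standard fact'' that is not standard and is, in substance, the theorem itself. Two specific points fail. First, the reduction in your step (1) --- handling each $E_i$-to-$F_j$ block separately and ``adding up the ranks'' --- is not valid: the rank of a block matrix is not the sum of the ranks of its blocks, and a general map $M$ does not decompose. Second, the dimension count you sketch in step (2) compares $\dim\Hom(V_1,V_2)$ with the codimension $|h^0(V_1)-h^0(V_2)|+1$ of the determinantal locus in $\Hom(H^0(V_1),H^0(V_2))$; but the image of $\Hom(V_1,V_2)$ in $\Hom(H^0(V_1),H^0(V_2))$ is a very special linear subspace, and a special linear subspace of large dimension can easily lie entirely inside a determinantal locus (this is exactly what happens in Example \ref{ex-scaling}). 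So no codimension comparison of that form can work without using the stability hypothesis in an essential, quantitative way, and your proposal never explains how condition (c) actually enters the estimate. Likewise, the candidate map $M_0$ ``assembled from the canonical evaluation maps'' is never constructed: an element of $\Hom(V_1,V_2)$ is a matrix of single homomorphisms $E_i\to F_j$, whereas the evaluation map $H^0(E_i)\otimes\Hom(E_i,F_j)\to H^0(F_j)$ is not induced by any single sheaf map, so surjectivity of $\ev$ does not by itself produce a surjection $H^0(V_1)\to H^0(V_2)$.

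For comparison, the paper's proof works with the incidence correspondence $\Phi=\{(v,M): Mv=0\}\subset H^0(V_1)\times\Hom(V_1,V_2)$ and shows $\dim\Phi\leq \hom(V_1,V_2)+\max\{h^0(V_1)-h^0(V_2),0\}$ by projecting to $H^0(V_1)$ rather than to $\Hom(V_1,V_2)$. One stratifies $H^0(V_1)$ by the dimensions $\rho_i$ of the spans $U_i(v)\subseteq H^0(E_i)$ of the coordinates of $v$, and further by the ranks $\sigma_j$ of the induced evaluation maps into each $H^0(F_j)$; the fiber of $\Phi$ over $v$ has dimension exactly $\hom(V_1,V_2)-\sum_j t_j\sigma_j$. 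Condition (c) of strong Kronecker stability gives the strict inequality $\rho_m/e_m<\sigma_n/f_n$ (for the extremal indices $m$, $n$), hence a gain of $1/(e_mf_n)$, which is then multiplied by $\max\{h^0(V_1),h^0(V_2)\}$ and must absorb the Grassmannian dimensions $\sum_i\dim G(\rho_i,e_i)\leq\sum_i e_i^2/4$ --- this is precisely where the constant $\tfrac14(\max_ie_i)(\max_jf_j)\sum_ie_i^2$ in the hypothesis comes from. If you want to salvage your approach, this stratified dimension count is the missing content of your step (2).
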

Theorems \ref{prop-linebundle} and \ref{mainCohomology} imply the following  theorem in the case of line bundles. 
 
 \begin{theorem}\label{lineBundleTechnical}
 Set $V_1 = \bigoplus_{i=1}^a  \OO_{\P^n}(d_i)^{s_i}$ and $V_2 = \bigoplus_{j=1}^b \OO_{\P^n}(e_j)^{t_j}$ be direct sums of line bundles on $\P^n$.  Assume $0\leq d_1 < \cdots <d_a < e_1 <\cdots <e_b$.
If 
$$\max\left\{ \sum_{i=1}^a s_i {d_i +n \choose n},\sum_{j=1}^b t_j{e_j+n\choose n}\right\} \geq \frac{1}{4}{d_a+n\choose n}{e_b+n\choose n}\sum_{i=1}^a {d_i+n\choose n}^2$$
then a general map $M:V_1\ra V_2$ induces a maximal rank map $M:H^0(V_1)\ra H^0(V_2)$.
 \end{theorem}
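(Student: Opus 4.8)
The plan is to deduce Theorem~\ref{lineBundleTechnical} as an immediate consequence of Theorems~\ref{prop-linebundle} and~\ref{mainCohomology}, applied with $E_i = \OO_{\PP^n}(d_i)$ for $1 \le i \le a$ and $F_j = \OO_{\PP^n}(e_j)$ for $1 \le j \le b$. So $V_1 = \bigoplus_i E_i^{s_i}$ and $V_2 = \bigoplus_j F_j^{t_j}$ are exactly the bundles appearing in Theorem~\ref{mainCohomology}, and it remains only to check that the two hypotheses of that theorem are satisfied.

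First I would verify the strong Kronecker stability condition $E_i < F_j$ for every pair $(i,j)$. Since $d_1 < \cdots < d_a < e_1 < \cdots < e_b$ and $d_1 \ge 0$, we have $0 \le d_i$ and $d_i \le d_a < e_1 \le e_j$ for all $i,j$; writing $e_j = d_i + (e_j - d_i)$ with $d_i \ge 0$ and $e_j - d_i > 0$, Theorem~\ref{prop-linebundle} gives $\OO_{\PP^n}(d_i) < \OO_{\PP^n}(e_j)$, i.e. $E_i < F_j$, as required.

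Next I would translate the numerical hypothesis. Using $h^0(\OO_{\PP^n}(d)) = \binom{d+n}{n}$, we get $h^0(V_1) = \sum_{i=1}^a s_i\binom{d_i+n}{n}$ and $h^0(V_2) = \sum_{j=1}^b t_j\binom{e_j+n}{n}$. Since $d \mapsto \binom{d+n}{n}$ is strictly increasing for $d \ge 0$ and the $d_i$ (resp. $e_j$) are strictly increasing, we have $\max_i h^0(E_i) = \binom{d_a+n}{n}$ and $\max_j h^0(F_j) = \binom{e_b+n}{n}$, while $\sum_{i=1}^a h^0(E_i)^2 = \sum_{i=1}^a \binom{d_i+n}{n}^2$. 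Substituting these identities into the hypothesis of Theorem~\ref{mainCohomology} yields precisely the displayed inequality in the statement of Theorem~\ref{lineBundleTechnical}. Therefore Theorem~\ref{mainCohomology} applies and a general $M \colon V_1 \to V_2$ induces a map $H^0(V_1) \to H^0(V_2)$ of maximal rank.

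The real content is already packaged in the two cited theorems, so there is no genuine obstacle at this stage; the only thing requiring (minor) care is the identification of $\max_i h^0(E_i)$ and $\max_j h^0(F_j)$ with the extreme binomial coefficients, which is where monotonicity of the binomial in $d$ is used, and the observation that the chain of strict inequalities on the twists guarantees $d_i < e_j$ for \emph{all} pairs, which is exactly what makes every $E_i < F_j$ hold.
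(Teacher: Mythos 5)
Your proposal is correct and is exactly the argument the paper intends: the paper states Theorem \ref{lineBundleTechnical} as an immediate consequence of Theorems \ref{prop-linebundle} and \ref{mainCohomology}, and your verification of the hypotheses (strong Kronecker stability of each pair $\OO_{\PP^n}(d_i) < \OO_{\PP^n}(e_j)$ and the translation of the numerical bound via $h^0(\OO_{\PP^n}(d)) = \binom{d+n}{n}$) fills in the routine details correctly.
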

 
\begin{comment}\begin{theorem}\label{lineBundleTechnical}
Set $V_1=\oplus_{i=1}^a \OO_{\PP^n}(d_i)^{s_i}$ and $V_2=\oplus_{j=1}^b \OO_{\PP^n}(e_j)^{t_j}$ be direct sums of line bundles on $\PP^n$. 
 Assume $0\leq d_1 < \cdots <d_a < e_1 <\cdots <e_b$.
If 
$$\sum_{i=1}^a s_i +\sum_{j=1}^b t_j \geq \frac{\binom{e_1+n}{n}}{2}\left(\sum_{i=1}^a \binom{d_i+n}{n}^2+\sum_{j=1}^b\binom{e_j+n}{n}^2\right),$$
then a general map $M:V_1\ra V_2$ induces a maximal rank map $M:H^0(V_1)\ra H^0(V_2)$.
\end{theorem}
\end{comment}

In particular, we obtain the following immediate corollaries.

\begin{corollary}\label{corOneLineBundle}
Let $0\leq d<e$ be two integers and let $M: \OO_{\PP^n}(d)^s \to \OO_{\PP^n}(e)^t$ be a general map. If $$\max\left\{ s {d+n\choose n},t{e+n\choose n}\right\} \geq \frac{1}{4} {d+n\choose n}^3{e+n\choose n}$$ then the induced map $M: H^0(\OO_{\PP^n}(d)^s) \to H^0( \OO_{\PP^n}(e)^t)$ has maximal rank. 
\end{corollary}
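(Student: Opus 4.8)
The plan is to deduce Corollary \ref{corOneLineBundle} as the case $a=b=1$ of Theorem \ref{lineBundleTechnical}, or, equivalently, to feed it directly into Theorem \ref{mainCohomology}. First I would take $E_1=\OO_{\PP^n}(d)$ and $F_1=\OO_{\PP^n}(e)$; since $0\leq d<e$, Theorem \ref{prop-linebundle} supplies the strong Kronecker stability $\OO_{\PP^n}(d)<\OO_{\PP^n}(e)$, which is the only non-numerical hypothesis needed to apply Theorem \ref{mainCohomology} to $V_1=E_1^s=\OO_{\PP^n}(d)^s$ and $V_2=F_1^t=\OO_{\PP^n}(e)^t$.

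Next I would translate the numerical condition of Theorem \ref{mainCohomology} into binomial coefficients. Using $h^0(\OO_{\PP^n}(d))=\binom{d+n}{n}$ and $h^0(\OO_{\PP^n}(e))=\binom{e+n}{n}$, one has $h^0(V_1)=s\binom{d+n}{n}$, $h^0(V_2)=t\binom{e+n}{n}$, $\max_i h^0(E_i)=\binom{d+n}{n}$, $\max_j h^0(F_j)=\binom{e+n}{n}$, and $\sum_i h^0(E_i)^2=\binom{d+n}{n}^2$. Substituting these into
\[
\max\{h^0(V_1),h^0(V_2)\}\ \geq\ \tfrac14\Bigl(\max_i h^0(E_i)\Bigr)\Bigl(\max_j h^0(F_j)\Bigr)\sum_{i} h^0(E_i)^2
\]
yields exactly the stated hypothesis $\max\{s\binom{d+n}{n},\,t\binom{e+n}{n}\}\geq\tfrac14\binom{d+n}{n}^3\binom{e+n}{n}$, so Theorem \ref{mainCohomology} applies and tells us that a general $M\in\Hom(V_1,V_2)$ induces a map $H^0(V_1)\to H^0(V_2)$ of maximal rank --- which is precisely the assertion of the corollary.

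I do not anticipate a genuine obstacle here: all of the substance is already carried by Theorems \ref{prop-linebundle} and \ref{mainCohomology} (ultimately by Macaulay's bound, through Lemma \ref{lemma}), and the only thing to watch is that the constant $\tfrac14\binom{d_a+n}{n}\binom{e_b+n}{n}\sum_i\binom{d_i+n}{n}^2$ appearing in Theorem \ref{lineBundleTechnical} collapses to $\tfrac14\binom{d+n}{n}^3\binom{e+n}{n}$ when $a=b=1$. If one wanted a self-contained argument, one could re-run the proof of Theorem \ref{mainCohomology} with a single summand on each side, but this would merely be the same proof specialized to a degenerate case.
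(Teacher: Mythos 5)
Your proposal is correct and is exactly how the paper obtains this corollary: it is the $a=b=1$ specialization of Theorem \ref{lineBundleTechnical}, i.e.\ a direct application of Theorem \ref{mainCohomology} with $E_1=\OO_{\PP^n}(d)$, $F_1=\OO_{\PP^n}(e)$, the strong Kronecker stability coming from Theorem \ref{prop-linebundle}. Your verification that the numerical hypothesis collapses to $\max\{s\binom{d+n}{n},t\binom{e+n}{n}\}\geq\frac14\binom{d+n}{n}^3\binom{e+n}{n}$ is the only computation needed, and it is right.
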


\begin{comment}
\begin{corollary}\label{corOneLineBundle}
Let $d<e$ be two integers and let $M: \OO_{\PP^n}(d)^s \to \OO_{\PP^n}(e)^t$ be a general map. If $$s+t \geq \frac{h^0(\OO_{\PP^n}(e))}{2} \left(h^0(\OO_{\PP^n}(d))^2 +h^0(\OO_{\PP^n}(e))^2\right), $$ then the induced map $M: H^0(\OO_{\PP^n}(d)^s) \to H^0( \OO_{\PP^n}(e)^t)$ has maximal rank. 
\end{corollary}
\end{comment}

\begin{corollary}\label{corScalingLinebundle}
Let ${\bf v}$ be the Chern character of a sheaf $V$ on $\PP^n$ defined as the kernel of a general map $M$
$$0 \to V \to \bigoplus_{i=1}^a \OO(d_i)^{s_i}  \stackrel{M}{\to}  \bigoplus_{j=1}^b \OO(e_i)^{t_j} \to 0$$ with $-n\leq d_1 < \cdots < d_a < e_1 < \cdots < e_b$.
Then for $m$ sufficiently large, there exist vector bundles $V$ on $\PP^n$ with  Chern character $m {\bf v}$ such that $V$ has at most one nonzero cohomology group. If $\chi(V) \geq 0$, then $h^0(V) = \chi(V)$. If $\chi(V) < 0$, then $h^1(V) = - \chi(V)$.
\end{corollary}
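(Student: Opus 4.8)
The plan is to realize the bundles with Chern character $m\bv$ as kernels of general maps $V_1^{\oplus m}\to V_2^{\oplus m}$ and to reduce the computation of their cohomology to a single maximal rank statement on $H^0$, which Theorem~\ref{lineBundleTechnical} supplies once $m$ is large. For the construction: since the displayed sequence is exact, $M$ is surjective, hence so is $M^{\oplus m}\colon V_1^{\oplus m}\to V_2^{\oplus m}$; as the surjective homomorphisms form an open subset of the irreducible affine space $\Hom(V_1^{\oplus m},V_2^{\oplus m})$ that is nonempty (it contains $M^{\oplus m}$), a general $M'$ there is surjective, $W:=\ker M'$ is a vector bundle, and additivity of the Chern character on $0\to W\to V_1^{\oplus m}\to V_2^{\oplus m}\to 0$ gives $\ch(W)=m(\ch V_1-\ch V_2)=m\bv$.

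Next I would reduce the cohomological claim to a maximal rank question. The hypothesis $d_i\ge -n$ (together with $e_j>d_i\ge -n$) forces every $\OO_{\PP^n}(d_i)$ and $\OO_{\PP^n}(e_j)$ to have vanishing cohomology in all positive degrees, so $H^k(V_1^{\oplus m})=H^k(V_2^{\oplus m})=0$ for $k\ge 1$. The long exact sequence of $0\to W\to V_1^{\oplus m}\to V_2^{\oplus m}\to 0$ then yields $H^k(W)=0$ for $k\ge 2$, $H^0(W)=\ker\bigl(H^0(V_1^{\oplus m})\to H^0(V_2^{\oplus m})\bigr)$, and $H^1(W)=\coker\bigl(H^0(V_1^{\oplus m})\to H^0(V_2^{\oplus m})\bigr)$. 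Hence $W$ has at most one nonvanishing cohomology group precisely when the induced map $H^0(V_1^{\oplus m})\to H^0(V_2^{\oplus m})$ has maximal rank, in which case the identity $h^0(W)-h^1(W)=\chi(W)$ forces $h^0(W)=\chi(W)$ when $\chi(W)\ge 0$ and $h^1(W)=-\chi(W)$ when $\chi(W)<0$, as required.

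It remains to produce the maximal rank for $m\gg 0$. Write $V_i=V_i'\oplus V_i''$, where $V_i'$ is the sum of the summands of nonnegative degree; since line bundles of negative degree have no sections, $H^0(V_i^{\oplus m})=H^0((V_i')^{\oplus m})$, and the map induced by $M'$ on $H^0$ coincides with the one induced by the component $(V_1')^{\oplus m}\to(V_2')^{\oplus m}$ of $M'$. If $V_1'$ or $V_2'$ has no global sections, the $H^0$-map is trivially of maximal rank; otherwise every degree occurring in $V_1'$ is nonnegative and strictly smaller than every degree occurring in $V_2'$, so Theorem~\ref{lineBundleTechnical} applies to $(V_1')^{\oplus m}$ and $(V_2')^{\oplus m}$. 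The left-hand side of its numerical hypothesis grows linearly in $m$ while the right-hand side is independent of $m$, so it holds for all $m\gg 0$; thus a general element of $\Hom((V_1')^{\oplus m},(V_2')^{\oplus m})$ induces a maximal rank map on $H^0$. Since the restriction-and-projection map $\Hom(V_1^{\oplus m},V_2^{\oplus m})\to\Hom((V_1')^{\oplus m},(V_2')^{\oplus m})$ is surjective and linear, this open condition pulls back to a nonempty open subset of $\Hom(V_1^{\oplus m},V_2^{\oplus m})$; intersecting it with the surjective locus from the first paragraph (both nonempty open in an irreducible space) produces a single $M'$ with all the desired properties.

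The only delicate point is the bookkeeping with the negative-degree summands: they are invisible to $H^0$, so must be discarded before invoking Theorem~\ref{lineBundleTechnical}, yet they must be retained to pin down the Chern character $m\bv$; and it is precisely the bound $d_i\ge -n$ that makes the higher cohomology of all the $\OO_{\PP^n}(d_i)$ and $\OO_{\PP^n}(e_j)$ vanish, which is what validates the reduction in the second paragraph. Everything else is a routine application of the openness of ``surjective'' and of ``maximal rank'' in families.
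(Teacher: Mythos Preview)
Your proof is correct and follows precisely the approach the paper intends; the corollary is stated without proof as an ``immediate'' consequence of Theorem~\ref{lineBundleTechnical}, and your argument spells out exactly that deduction. Your explicit handling of the negative-degree summands (splitting off $V_i''$, observing that they contribute nothing to $H^0$, and checking that the projection $\Hom(V_1^{\oplus m},V_2^{\oplus m})\to\Hom((V_1')^{\oplus m},(V_2')^{\oplus m})$ is surjective) is the only step the paper leaves implicit, and you have filled it in cleanly.
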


\begin{proof}[Proof of Theorem \ref{mainCohomology}]
To fix notation, we write $$e_i = h^0(E_i), \quad f_j = h^0(F_j), \quad \textrm{and} \quad r_{ij} = \dim \Hom(E_i,F_j).$$  Therefore $$h^0(V_1) = \sum_i s_ie_i \quad \textrm{and} \quad h^0(V_2) = \sum_j t_jf_j.$$ Consider the incidence correspondence $$\Phi := \{(v,M)|Mv = 0\} \subset H^0(V_1)\times \Hom(V_1,V_2)$$ with projections \begin{align*}\alpha&:\Phi\to H^0(V_1)\\\beta&:\Phi\to \Hom(V_1,V_2).\end{align*}  Let $k= \max\{h^0(V_1)-h^0(V_2),0\}$, so that $k$ is the dimension of the kernel of $H^0(V_1)\fto M H^0(V_2)$ if this map has maximal rank.   The fibers of $\beta$ all have dimension at least $k$, so $$\dim \Phi \geq \hom(V_1,V_2)+k.$$  If we have an equality $\dim \Phi = \hom(V_1,V_2) + k$, then the general fiber of $\beta$ has dimension exactly $k$ and the general map $H^0(V_1)\fto M H^0(V_2)$ has maximal rank.  Thus, it suffices to prove $$\dim \Phi \leq \hom(V_1,V_2)+k.$$  

We estimate the dimension of $\Phi$ by studying the first projection $\alpha$.  The main difficulty here is that for certain vectors $v\in H^0(V_1)$ the fiber $\alpha^{-1}(v)$ can be very large.   We will need to stratify the space $H^0(V_1)$ into various loci where the fibers of $\alpha$ jump and control the dimensions of these loci and the fibers over them.

Since $H^0(V_1) = \bigoplus_i H^0(E_i)^{s_i}$, we can view a vector $v\in H^0(V_1)$ as a list of elements coming from the various subspaces $H^0(E_i)$.  Given a $v\in H^0(V_1)$, we construct the subspace $U_i(v) \subseteq H^0(E_i)$ which is spanned by the $s_i$ entries of $v$ which come from $H^0(E_i)$.  We let $\rho_i(v) = \dim U_i(v)$, and we associate to $v$ the vector of dimensions $$\vec\rho(v) := (\rho_1(v),\ldots,\rho_a(v)).$$  Notice that $0\leq \rho_i(v) \leq \min \{e_i,s_i\}$.  For any vector $\vec\rho = (\rho_1,\ldots,\rho_a)$  with $0\leq \rho_i \leq \min\{e_i,s_i\}$, we let $$S_{\vec\rho} = \{ v\in H^0(V_1) : \vec\rho(v) = \vec \rho\} \subset H^0(V_1)$$ and we let $$\Phi_{\vec\rho} = \alpha^{-1}(S_{\vec\rho}).$$  Then $H^0(V_1)$ is stratified by the various $S_{\vec\rho}$, so $\Phi$ is stratified by the various $\Phi_{\vec\rho}$.  There are only finitely many possible $\vec\rho$, so we need to show that $$\dim \Phi_{\vec\rho} \leq \hom(V_1,V_2) + k$$ for all $\vec\rho$.  Fix a single vector $\vec \rho = (\rho_1,\ldots,\rho_a)$ for the rest of the proof.

The dimension of $S_{\vec \rho}$ is easy to compute.  Constructing a vector $v\in S_{\vec \rho}$ amounts to selecting, for each $i$, a subspace $U_i \subset H^0(E_i)$ of dimension $\rho_i$, and a list of $s_i$ vectors from this subspace.  Therefore $$\dim S_{\vec \rho} = \sum_i \dim G(\rho_i,e_i) + \sum_i s_i\rho_i.$$

Next we estimate the dimension of the fibers of $\alpha : \Phi_{\vec \rho} \to S_{\vec \rho}.$    Viewing $M: \bigoplus_i E_i^{s_i} \to \bigoplus_j F_j^{t_j}$ as a matrix where each entry comes from some space $\Hom(E_i,F_j)$, each row of $M$ is an element of $$\Hom(E_1,F_j)^{s_1} \oplus \cdots \oplus \Hom(E_a,F_j)^{s_a}$$ for an appropriate $j$.  To have $Mv = 0$, each row of $M$ must come from the kernel of a corresponding evaluation map $$\psi_j(v):\Hom(E_1,F_j)^{s_1}\oplus\cdots \oplus \Hom(E_a,F_j)^{s_a} \to  H^0(F_j).$$ Let $\sigma_j(v) = \dim \im \psi_j(v)$ be the rank of this evaluation map.  Then $$\dim \ker \psi_j(v) = \sum_i s_i r_{ij} -\sigma_j(v),$$ and taking all the rows of $M$ into consideration, $$\dim \alpha^{-1}(v) = \sum_j t_j \dim \ker \psi_j(v) = \hom(V_1,V_2) - \sum_j t_j \sigma_j(v).$$ We define a vector $\vec \sigma(v) = (\sigma_1(v),\ldots,\sigma_b(v))$.  Given a vector $\vec \sigma = (\sigma_1,\ldots,\sigma_b)$, we further stratify the $S_{\vec \rho}$ into subsets $$S_{\vec \rho,\vec \sigma} = \{ v\in S_{\vec \rho}: \vec \sigma(v) = \vec \sigma\}.$$  We let $$\Phi_{\vec \rho,\vec \sigma} = \alpha^{-1} (S_{\vec \rho,\vec \sigma}),$$ which further stratifies $\Phi_{\vec \rho}$.  Thus it suffices to show each $\Phi_{\vec \rho,\vec \sigma}$ has dimension bounded by $\hom(V_1,V_2)+k$.  The dimension of $\alpha^{-1}(v)$ is constant for $v\in S_{\vec \rho,\vec \sigma}$, so since $S_{\vec \rho,\vec \sigma} \subset S_{\vec \rho}$ we have \begin{equation}\label{phiDim} \dim \Phi_{\vec \rho,\vec \sigma}= \dim S_{\vec\rho,\vec\sigma}+\dim \alpha^{-1}(v)\leq \sum_i \dim G(\rho_i,e_i) + \hom(V_1,V_2) + \sum_i s_i \rho_i - \sum_j t_j \sigma_j.\end{equation} For the rest of the proof we fix a vector $\vec\sigma$ such that $S_{\vec \rho,\vec \sigma}$ is nonempty, and write $v$ for an element of $S_{\vec \rho,\vec \sigma}$.

If for some $i$ we have  $\rho_i=e_i$, then  $U_i(v) = H^0(E_i)$.  Since $E_i < F_j$ for every $j$, the evaluation maps $\psi_j(v)$ are all surjective and $\sigma_j = f_j$.  Estimating $\dim S_{\vec \rho,\vec \sigma} \leq h^0(V_1)$,  we find $$\dim \Phi_{\vec \rho,\vec\sigma } = \dim S_{\vec \rho,\vec \sigma} + \dim \alpha^{-1}(v) \leq h^0(V_1)+\hom(V_1,V_2)-h^0(V_2) \leq \hom(V_1,V_2)+k.$$ Thus the proof is complete in this case.  In what follows we assume that $\rho_i < e_i$ for all $i$.

Let $1\leq m\leq a$ be the index such that $\rho_m/e_m$ is as large as possible and let $1\leq n \leq b$ be the index such that $\sigma_n/f_n$ is as small as possible.  Observe that the image of the evaluation map $$\phi_{mn}(v):U_m(v) \te \Hom(E_m,F_n)\to H^0(F_n)$$ is contained in $\im \psi_n(v)$.  Since $E_m < F_n$ and $\rho_m<e_m$, we have 
$$\frac{\rho_m}{e_m} < \frac{\dim \im \phi_{mn}(v)}{f_n} \leq \frac{\sigma_n}{f_n},$$
and therefore $$\frac{\rho_m}{e_m}\leq \frac{\sigma_n}{f_n} - \frac{1}{e_mf_n}.$$
Since $\rho_i/e_i \leq \rho_m/e_m$ we estimate $$\sum_i s_i\rho_i \leq \frac{\rho_m}{e_m}\sum_i s_i e_i = \frac{\rho_m}{e_m} h^0(V_1),$$ and similarly from $\sigma_i/f_i \geq \sigma_n/f_n$ we get $$\sum_j t_j\sigma_j \geq \frac{\sigma_n}{f_n} \sum_j t_jf_j = \frac{\sigma_n}{f_n} h^0(V_2).$$
Finally by estimating either $\rho_m/e_m$ or $\sigma_n/f_n$ we get the two inequalities $$\sum_i s_i \rho_i - \sum_j t_j \sigma_j \leq \frac{\rho_m}{e_m} h^0(V_1) - \frac{\sigma_n}{f_n}h^0(V_2)\leq \frac{\sigma_n}{f_n}(h^0(V_1)-h^0(V_2))-\frac{h^0(V_1)}{e_mf_n} \leq k - \frac{h^0(V_1)}{e_mf_n}$$ and $$\sum_i s_i\rho_i - \sum_j t_j \sigma_j \leq \frac{\rho_m}{e_m} h^0(V_1) - \frac{\sigma_n}{f_n} h^0(V_2) \leq \frac{\rho_m}{e_m} (h^0(V_1)-h^0(V_2))-\frac{h^0(V_2)}{e_mf_m}\leq k - \frac{h^0(V_2)}{e_mf_n},$$ which together give $$\sum_i s_i\rho_i - \sum_j t_j\sigma_j \leq k - \frac{1}{e_mf_n}\max\{h^0(V_1),h^0(V_2)\}.$$ Combining this with Inequality (\ref{phiDim}), we see that to prove $\dim \Phi_{\vec \rho,\vec \sigma} \leq \hom(V_1,V_2)+k$ it is enough to have $$\max\{h^0(V_1),h^0(V_2)\}\geq e_mf_n\sum_i \dim G(\rho_i,e_i) .$$ Here the dimension of the Grassmannian $G(\rho_i,e_i)$ is bounded by $e_i^2/4$, and the result follows.  
\end{proof}

We close this section by showing that Conjecture \ref{conj-steiner} holds asymptotically. We start with an elementary lemma.

\begin{lemma}\label{lem-basicbound}
Set $\alpha = \left\lceil \frac{tn}{r} \right\rceil $ and $\beta=\left \lfloor \frac{tn}{r}\right \rfloor$. A vector bundle of rank $r$ on $\PP^n$ given by a presentation
$$0 \to V \to \OO_{\PP^n}^{t+r} \to \OO_{\PP^n}(1)^t \to 0$$
has natural cohomology if and only if $$H^0\left(V\left(\beta-1\right)\right)=0 \quad \mbox{and} \quad H^1\left(V\left(\alpha-1\right)\right)=0.$$
\end{lemma}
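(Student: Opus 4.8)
The plan is to read off every cohomology group of the twists $V(k)$ from the long exact sequence of the twisted presentation
\[
0\to V(k)\to \OO_{\PP^n}(k)^{t+r}\xrightarrow{\;\mu_k\;}\OO_{\PP^n}(k+1)^{t}\to 0,
\]
where $\mu_k\colon H^0(\OO_{\PP^n}(k))^{t+r}\to H^0(\OO_{\PP^n}(k+1))^{t}$ is the induced map on sections; we may assume $n\geq 2$ and $t\geq 1$, the omitted cases being elementary. Since $H^i(\OO_{\PP^n}(d))=0$ for $0<i<n$, the sequence gives $H^i(V(k))=0$ for $2\leq i\leq n-1$ and $H^1(V(k))=\coker\mu_k$, together with $H^0(V(k))=0$ for $k<0$, $H^n(V(k))=0$ for $k\geq -n$, and $H^0(V(k))=H^1(V(k))=0$ for $k\leq -n-1$. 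Hence for every $k\leq -1$ at most one cohomology group of $V(k)$ is nonzero automatically, so $V$ has natural cohomology if and only if for every $k\geq 0$ at most one of $H^0(V(k))$, $H^1(V(k))$ is nonzero; for such $k$ moreover $\chi(V(k))=h^0(V(k))-h^1(V(k))$.

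For the forward implication I would use the Euler characteristic. A short computation gives $\chi(V(k))=\binom{k+n}{n}\bigl(r-\tfrac{tn}{k+1}\bigr)$ for $k\geq 0$ and $\chi(V(-1))=-t$, so $\chi(V(\beta-1))\leq 0\leq\chi(V(\alpha-1))$ by the definitions of $\alpha$ and $\beta$. If $V$ has natural cohomology and $H^0(V(\beta-1))\neq 0$, then $\beta-1\geq 0$ and (by naturality at that twist) $h^1(V(\beta-1))=0$, whence $\chi(V(\beta-1))=h^0(V(\beta-1))>0$, a contradiction; thus $H^0(V(\beta-1))=0$. The symmetric argument at twist $\alpha-1$, using $\alpha-1\geq 0$ and $\chi(V(\alpha-1))\geq 0$, gives $H^1(V(\alpha-1))=0$.

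For the reverse implication the point is that the vanishing of $H^0$ and of $H^1$ each propagate monotonically. Multiplication by a nonzero linear form gives an injection $V(k-1)\hookrightarrow V(k)$, so $H^0(V(k))=0$ implies $H^0(V(k-1))=0$; hence $H^0(V(\beta-1))=0$ forces $H^0(V(k))=0$ for all $k\leq\beta-1$. On the other side, all the maps $\mu_k$ are given by a single fixed matrix of linear forms acting in varying degree, so $\im\mu_{k+1}\supseteq H^0(\OO_{\PP^n}(1))\cdot\im\mu_k$; since multiplication $H^0(\OO_{\PP^n}(1))\otimes H^0(\OO_{\PP^n}(k+1))\to H^0(\OO_{\PP^n}(k+2))$ is surjective for $k\geq -1$, surjectivity of $\mu_k$ (that is, $H^1(V(k))=0$) implies surjectivity of $\mu_{k+1}$. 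Hence $H^1(V(\alpha-1))=0$ forces $H^1(V(k))=0$ for all $k\geq\alpha-1$. Because $\alpha-1=\lceil tn/r\rceil-1\leq\lfloor tn/r\rfloor=\beta$, every integer $k$ satisfies $k\leq\beta-1$ or $k\geq\alpha-1$, so at most one of $H^0(V(k))$, $H^1(V(k))$ is nonzero for every $k$, and $V$ has natural cohomology by the first paragraph.

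No step is deep, but two points in the reverse direction deserve care: the upward propagation of the vanishing of $H^1$, which relies essentially on the $\mu_k$ being governed by a single matrix of linear forms (so it reduces to projective normality of $\OO_{\PP^n}(1)$ rather than to any genericity of the map), and the verification that the two ranges $k\leq\beta-1$ and $k\geq\alpha-1$ exhaust all twists, which is exactly the elementary relation $\lceil tn/r\rceil-1\leq\lfloor tn/r\rfloor$ between the ceiling and the floor of $tn/r$.
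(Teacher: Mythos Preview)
Your proof is correct and follows essentially the same outline as the paper's: both read off the cohomology of $V(k)$ from the twisted presentation, reduce to the nonnegative twists, use the sign of $\chi$ to identify the two critical twists, propagate $H^0$-vanishing downward via multiplication by a linear form, and propagate $H^1$-vanishing upward.

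The one substantive difference is in the upward propagation of $H^1$-vanishing. The paper observes that $H^1(V(a))=0$ together with the automatic vanishing of the higher cohomology makes $V$ Castelnuovo--Mumford $(a+1)$-regular, and then invokes Mumford's theorem to conclude $H^1(V(b))=0$ for all $b>a$. You instead argue directly from the presentation: since all the maps $\mu_k$ are given by the same matrix of linear forms, $\im\mu_{k+1}\supseteq H^0(\OO_{\PP^n}(1))\cdot\im\mu_k$, and surjectivity of $\mu_k$ then forces surjectivity of $\mu_{k+1}$ by the surjectivity of multiplication $H^0(\OO(1))\otimes H^0(\OO(k+1))\to H^0(\OO(k+2))$. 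Your argument is more elementary and self-contained, exploiting the specific shape of the resolution rather than a black-box regularity theorem; the paper's argument is shorter to state and uses a standard tool. Either way the content is the same, and your verification that the ranges $k\le\beta-1$ and $k\ge\alpha-1$ exhaust all integers (via $\lceil tn/r\rceil-1\le\lfloor tn/r\rfloor$) makes explicit a point the paper leaves implicit.
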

\begin{proof}
Tensor the defining sequence of $V$ by $\OO_{\PP^n}(a)$. By the long exact sequence of cohomology, if $a < -1$, then $H^i(V(a)) = 0$ for $i<n$. In fact, if $-n-1< a <-1$, then $V(a)$ has no cohomology. Moreover, $h^1(V(-1))= t$ and all other cohomology of $V(-1)$ vanishes. From now on we assume that $a \geq 0$. In this case, $H^i(V(a))=0$ for $i \geq 2$. 
Observe that 
$$\chi(V(a)) \leq 0 \ \mbox{if} \ 0 \leq a \leq \beta-1 \quad \mbox{and} \quad \chi(V(a)) \geq 0 \ \mbox{if} \ a \geq \alpha -1.$$
Furthermore, equality occurs in either case exactly when $r$ divides $tn$. When $\chi(V(a))\leq 0$, we need to show that $H^0(V(a))=0$. Since $H^0(V(a-1)) \subset H^0(V(a))$, it suffices to verify the case $a =  \beta-1$. When  $\chi(V(a))\geq 0$, we need to show that $H^1(V(a))=0$. If $H^1(V(a))=0$, then $V$ is $(a+1)$-regular in the sense of Castelnuovo and Mumford. It follows that $V$ is $b$-regular for $b\geq a+1$ by Mumford's Theorem \cite[Theorem 1.8.3]{Laz04}. Consequently, for all $b>a$, $H^1(V(b))=0$. Hence, it suffices to verify the vanishing for $a =  \alpha-1.$ Since the necessity of the two conditions is clear, this concludes the proof of the lemma.
\end{proof}

\begin{theorem}\label{thm-Steinerscale}
Let $V$ be a  bundle of rank $mr$ on $\PP^n$ given by a general presentation of the form
$$0 \to V \to \OO_{\PP^n}^{m(t+r)} \to \OO_{\PP^n}(1)^{mt} \to 0.$$ Set $\alpha = \left\lceil \frac{tn}{r} \right\rceil $ and $\beta=\left \lfloor \frac{tn}{r}\right \rfloor$.
If $$
m\geq \max \left\{\frac{1}{4(t+r)} {\alpha+n-1 \choose n}^2{\alpha+n \choose n}, \frac{1}{4t} {\beta + n  -1 \choose n}^3\right\},
$$
then $V$ has natural cohomology.
\end{theorem}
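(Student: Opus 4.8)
The plan is to reduce everything to Corollary~\ref{corOneLineBundle} via Lemma~\ref{lem-basicbound}, applied with $(r,t)$ replaced by $(mr,mt)$; note that the integers $\lceil (mt)n/(mr)\rceil$ and $\lfloor (mt)n/(mr)\rfloor$ attached by that lemma are exactly the given $\alpha$ and $\beta$. Thus it suffices to show the two vanishings $H^0(V(\beta-1))=0$ and $H^1(V(\alpha-1))=0$. First dispose of degenerate cases: if $t=0$ then $V=\OO_{\PP^n}^{mr}$ is trivial and there is nothing to prove, so assume $t\geq 1$, whence $\alpha\geq 1$ and $\alpha-1\geq 0$; and if $tn<r$ then $\beta=0$, the vanishing $H^0(V(-1))=0$ is automatic (as recorded in the proof of Lemma~\ref{lem-basicbound}), while the bound $\tfrac{1}{4t}\binom{\beta+n-1}{n}^3 = \tfrac{1}{4t}\binom{n-1}{n}^3 = 0$ imposes no condition — so we may otherwise assume $\beta\geq 1$ and $\beta-1\geq 0$.

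For any integer $a\geq 0$, twist the defining sequence by $\OO_{\PP^n}(a)$. Since $H^{i}(\OO_{\PP^n}(a))=0$ and $H^{i}(\OO_{\PP^n}(a+1))=0$ for all $i\geq 1$, the long exact cohomology sequence collapses to
\[
0\to H^0(V(a))\to H^0(\OO_{\PP^n}(a))^{m(t+r)}\xrightarrow{\ M_a\ } H^0(\OO_{\PP^n}(a+1))^{mt}\to H^1(V(a))\to 0,
\]
together with $H^i(V(a))=0$ for $i\geq 2$, where $M_a$ is the map on global sections induced by the twist $M(a)$ of the defining map $M$. Because $\Hom(\OO_{\PP^n}(a),\OO_{\PP^n}(a+1))=\Hom(\OO_{\PP^n},\OO_{\PP^n}(1))$, a general matrix $M$ of linear forms is simultaneously a general element of $\Hom(\OO_{\PP^n}(a)^{m(t+r)},\OO_{\PP^n}(a+1)^{mt})$ for every $a\geq 0$; moreover "$M_a$ has maximal rank" is, by Corollary~\ref{corOneLineBundle}, a nonempty open condition on $M$, so it can be imposed for the two values $a=\beta-1$ and $a=\alpha-1$ at once (intersection of two dense opens in an irreducible parameter space). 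By the sign computations for $\chi(V(a))$ in the proof of Lemma~\ref{lem-basicbound}, the source of $M_{\beta-1}$ has dimension at most that of its target, so maximal rank forces injectivity and $H^0(V(\beta-1))=\ker M_{\beta-1}=0$; dually, the source of $M_{\alpha-1}$ has dimension at least that of its target, so maximal rank forces surjectivity and $H^1(V(\alpha-1))=\coker M_{\alpha-1}=0$.

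It remains to invoke Corollary~\ref{corOneLineBundle} to get maximal rank of $M_{\beta-1}$ and $M_{\alpha-1}$. Apply it with $d=\beta-1$, $e=\beta$ and exponents $m(t+r)$, $mt$: maximal rank of $M_{\beta-1}$ holds as soon as $mt\binom{\beta+n}{n}\geq \tfrac14\binom{\beta+n-1}{n}^3\binom{\beta+n}{n}$, that is, $m\geq \tfrac{1}{4t}\binom{\beta+n-1}{n}^3$. Apply it with $d=\alpha-1$, $e=\alpha$ and the same exponents: maximal rank of $M_{\alpha-1}$ holds as soon as $m(t+r)\binom{\alpha+n-1}{n}\geq \tfrac14\binom{\alpha+n-1}{n}^3\binom{\alpha+n}{n}$, that is, $m\geq \tfrac{1}{4(t+r)}\binom{\alpha+n-1}{n}^2\binom{\alpha+n}{n}$. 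Taking $m$ at least the maximum of these two quantities — which is exactly the hypothesis — gives both vanishings, and Lemma~\ref{lem-basicbound} concludes that $V$ has natural cohomology.

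\textbf{Where the work lies.} The real content is Theorem~\ref{mainCohomology} (via Corollary~\ref{corOneLineBundle}); everything above is bookkeeping. The only points needing genuine care are (i) that genericity of a matrix of linear forms is inherited by every twist, so one general $M$ serves both twists at once, and (ii) that the sign of $\chi(V(a))$ at $a=\beta-1$ and $a=\alpha-1$ converts the neutral statement "maximal rank" into the specific injectivity and surjectivity needed, together with the routine choice of which term of the $\max$ in Corollary~\ref{corOneLineBundle} to bound against in each case.
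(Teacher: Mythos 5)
Your proposal is correct and follows essentially the same route as the paper: reduce to the two vanishings $H^0(V(\beta-1))=0$ and $H^1(V(\alpha-1))=0$ via Lemma~\ref{lem-basicbound}, then obtain each from Corollary~\ref{corOneLineBundle} by bounding the $t$-term of the max for the $\beta$-twist and the $(t+r)$-term for the $\alpha$-twist, which reproduces exactly the two quantities in the hypothesis. The paper's proof is just these two invocations stated tersely; your additional bookkeeping (the degenerate cases $t=0$ and $\beta=0$, the observation that one general $M$ is simultaneously general for every twist, and the sign of $\chi$ converting maximal rank into injectivity or surjectivity) correctly supplies the details the paper leaves implicit.
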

\begin{proof}
 By Corollary \ref{corOneLineBundle}, since
 \[
 mt\geq \frac{1}{4}\binom{\beta+n-1}{n}^3,
 \]
 we have $H^0(V(\beta-1))=0$, and
 since
 \[
 m(t+r)\geq \frac{1}{4}\binom{\alpha+n-1}{n}^2\binom{\alpha+n}{n},
 \]
 we have $H^1(V(\alpha-1))=0$.
 By Lemma \ref{lem-basicbound}, $V$ has natural cohomology.
\end{proof}

\section{Strong Kronecker stability and mutations}\label{sec-SKSmutation}
In this section, given a pair of sheaves that are strongly Kronecker stable, we will show how to generate new strongly Kronecker stable pairs. We begin by showing that strong Kronecker stability is transitive.

\begin{lemma}\label{lem-transitive}
Let $E_1, E_2, E_3$ be three coherent sheaves on $\PP^n$. If $E_1<E_2$ and $E_2<E_3$, then $E_1<E_3$. 
\end{lemma}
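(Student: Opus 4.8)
The plan is to transport everything through the composition pairing $\Hom(E_1,E_2)\otimes\Hom(E_2,E_3)\to\Hom(E_1,E_3)$, exploiting the identity $(\psi\circ\phi)(v)=\psi(\phi(v))$ for $v\in H^0(E_1)$, $\phi\in\Hom(E_1,E_2)$, $\psi\in\Hom(E_2,E_3)$. Write $\ev_{12},\ev_{23},\ev_{13}$ for the three evaluation maps appearing in the definition of strong Kronecker stability. The key elementary fact I would record first is that for any subspace $V\subseteq H^0(E_1)$, setting $W:=\ev_{12}(V\otimes\Hom(E_1,E_2))\subseteq H^0(E_2)$, one has a containment of images
\[
\ev_{13}\bigl(V\otimes\Hom(E_1,E_3)\bigr)\ \supseteq\ \ev_{23}\bigl(W\otimes\Hom(E_2,E_3)\bigr),
\]
since the right-hand side is spanned by vectors $\psi(\phi(v))=(\psi\circ\phi)(v)$ with $v\in V$ and $\psi\circ\phi\in\Hom(E_1,E_3)$. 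I would also note at the outset that condition (b) in $E_1<E_2$ and in $E_2<E_3$ forces $H^0(E_1)$, $H^0(E_2)$, $H^0(E_3)$ all to be nonzero, so every ratio below is defined.

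Granting this, the three conditions for $E_1<E_3$ follow quickly. Condition (a) for $E_1<E_3$ is literally condition (a) for $E_2<E_3$. For condition (b), apply the containment with $V=H^0(E_1)$: then $W=H^0(E_2)$ by surjectivity of $\ev_{12}$, and the right-hand side is all of $H^0(E_3)$ by surjectivity of $\ev_{23}$, so $\ev_{13}$ is surjective. For condition (c), fix $0\neq V\subsetneq H^0(E_1)$ and set $W:=\ev_{12}(V\otimes\Hom(E_1,E_2))$; condition (c) for $E_1<E_2$ gives $\dim W/h^0(E_2)>\dim V/h^0(E_1)$, and in particular $W\neq 0$. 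I then split into two cases. If $W=H^0(E_2)$, the containment forces $\ev_{13}(V\otimes\Hom(E_1,E_3))=H^0(E_3)$, whose normalized dimension is $1>\dim V/h^0(E_1)$ because $V$ is a \emph{proper} subspace. If instead $0\neq W\subsetneq H^0(E_2)$, then condition (c) for $E_2<E_3$ applies to $W$, and combining it with the containment and the previous inequality chains to
\[
\frac{\dim\ev_{13}(V\otimes\Hom(E_1,E_3))}{h^0(E_3)}\ \geq\ \frac{\dim\ev_{23}(W\otimes\Hom(E_2,E_3))}{h^0(E_3)}\ >\ \frac{\dim W}{h^0(E_2)}\ >\ \frac{\dim V}{h^0(E_1)}.
\]
This establishes condition (c) and finishes the argument.

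I do not expect a genuine obstacle here. The only two points needing attention are: first, the displayed inclusion is in general only a containment, not an equality, because $\Hom(E_1,E_3)$ can contain homomorphisms that do not factor through $E_2$ — but a containment is exactly what is needed since we only want a lower bound on $\dim\ev_{13}(V\otimes\Hom(E_1,E_3))$; and second, in the case $W=H^0(E_2)$ one must close the argument using the strict inclusion $V\subsetneq H^0(E_1)$ rather than condition (c) for $E_2<E_3$, which does not apply to an improper subspace.
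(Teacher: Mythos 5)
Your proof is correct and follows essentially the same route as the paper: the containment $\ev_{13}(V\otimes\Hom(E_1,E_3))\supseteq\ev_{23}(W\otimes\Hom(E_2,E_3))$ is exactly the paper's observation $V_3'\subseteq V_3$, and your case split on whether $W=H^0(E_2)$ (handled via $\dim V<h^0(E_1)$) versus $W\subsetneq H^0(E_2)$ (handled by chaining the two strict inequalities) mirrors the paper's argument precisely.
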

\begin{proof}
Observe that $H^0(E_3) \neq 0$ by the assumption that $E_2 < E_3$. For $i<j$, write  $$\ev_{i,j}:H^0(E_i) \otimes \Hom(E_i, E_j) \ra H^0(E_j).$$ By assumption, $\ev_{1,2}$ and $\ev_{2,3}$ are surjective, therefore $\ev_{1,3}$ is also surjective. 

Let $0\neq V_1 \subsetneq H^0(E_1)$ be a subspace.  For $i=2,3$, let $V_i\subseteq H^0(E_i)$ be the image of $V_1 \otimes \Hom(E_1, E_i)$ under $ev_{1,i}$.  Let $V'_3$ be the image of $V_2\otimes \Hom(E_2,E_3)$ in $H^0(E_3)$. Observe that  $V'_3\subseteq V_3$. Set $v_i=\dim(V_i)$ and $v'_3=\dim(V'_3)$.  Since $0\neq V_1\subset H^0(E_1)$ and $E_1<E_2$,  we have 
\[
v_2>v_1\frac{h^0(E_2)}{h^0(E_1)}.
\]
If $V_2=H^0(E_2)$, then $V_3=V'_3=H^0(E_3)$ and the inequality $v_3>v_1\frac{h^0(E_3)}{h^0(E_1)}$ follows from $v_1<h^0(E_1)$.  Otherwise, since $E_2<E_3$, we have
\[
v'_3>v_2\frac{h^0(E_3)}{h^0(E_2)}
\]
Since $v_3\geq v'_3$,  the two inequalities above combine to
\[
v_3>v_1\frac{h^0(E_3)}{h^0(E_1)}.\qedhere
\]
\end{proof}

Recall that a  coherent sheaf $E$ on $\PP^n$ is
\begin{enumerate}
\item  {\em simple} if $\Hom(E,E) \cong \CC$;
\item {\em rigid} if $\Ext^1(E,E)=0$;
\item {\em exceptional} if $E$ is simple and $\Ext^i(E,E)=0$ for all $i>0$.
\end{enumerate}
Let $E$ and $F$ be two coherent sheaves on $\PP^n$ such that the evaluation map $$\ev: E \otimes \Hom(E, F) \ra F$$ is surjective. Then we define the {\em left mutation} $L_{E}F$ as the kernel of the evaluation map
\begin{equation}\label{def-leftmutation}
0 \ra L_{E}F \ra E \otimes \Hom(E,F) \ra F \ra 0.
\end{equation}
If the coevaluation map $$\ev^*:E\to F\te \Hom(E,F)^*$$ is injective, we define the {\em right mutation} $R_{F}E$ as its cokernel
 \begin{equation}\label{def-rightmutation}
0 \ra E \ra F \otimes \Hom(E,F)^* \ra  R_{F}E \ra 0.
 \end{equation}
Starting with a pair of strongly Kronecker stable line bundles, we can generate many other pairs of strongly Kronecker stable  bundles via mutations.
The following proposition summarizes the process.

\begin{proposition}\label{mutationProp}
Let $E,F$ be two coherent sheaves on $\PP^n$ such that $$\Hom(F,E)=\Ext^1(F,E)=0.$$ Assume that $E < F$.

\begin{enumerate}
\item If $E$ is simple, the evaluation map $E\te \Hom(E,F)\to F$ is surjective,  and $H^0(L_{E}F)\neq 0$, then $L_E F < E$. 
 \item If $F$ is simple, the coevaluation map $E\to F\te \Hom(E,F)^*$ is injective, and $H^1(E)= 0$, then $F<R_FE$. 
 \end{enumerate}
\end{proposition}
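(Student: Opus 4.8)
The two parts are dual in spirit, so I would prove (a) carefully and then describe the modifications needed for (b). For part (a), suppose $E<F$, $E$ is simple, the evaluation $\ev\colon E\otimes\Hom(E,F)\to F$ is surjective, $\Hom(F,E)=\Ext^1(F,E)=0$, and $H^0(L_EF)\neq 0$. I need to verify the three axioms of strong Kronecker stability for the pair $L_EF,\ E$. Condition (i) is exactly the hypothesis $H^0(E)\neq 0$, which holds since $H^0(F)\neq 0$ and the evaluation map is surjective only if $H^0(E)\neq 0$ — more directly, $E<F$ requires $H^0(E)\neq 0$ unless $H^0(F)=0$, and here $H^0(F)\neq 0$. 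The key computational input is to understand $\Hom(L_EF,E)$ and the evaluation map $H^0(L_EF)\otimes\Hom(L_EF,E)\to H^0(E)$. Applying $\Hom(-,E)$ to the defining sequence $0\to L_EF\to E\otimes\Hom(E,F)\to F\to 0$ and using $\Hom(F,E)=\Ext^1(F,E)=0$ together with simplicity of $E$ gives $\Hom(L_EF,E)\cong \Hom(E,F)^*\otimes\Hom(E,E)\cong \Hom(E,F)^*$. So the arrows of the relevant Kronecker module are naturally indexed by $\Hom(E,F)^*$.

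The heart of the argument is the following identification: taking $H^0$ of the defining sequence yields
\[
0\to H^0(L_EF)\to H^0(E)\otimes\Hom(E,F)\to H^0(F),
\]
so $H^0(L_EF)=\ker\big(\ev\colon H^0(E)\otimes\Hom(E,F)\to H^0(F)\big)$, and I claim the evaluation map $H^0(L_EF)\otimes\Hom(L_EF,E)\to H^0(E)$, under the identification $\Hom(L_EF,E)=\Hom(E,F)^*$, is the restriction to this kernel of the natural contraction $\big(H^0(E)\otimes\Hom(E,F)\big)\otimes\Hom(E,F)^*\to H^0(E)$. Granting this, surjectivity of the evaluation map (condition (ii)) and the stability inequality (condition (iii)) for the pair $L_EF,E$ become purely linear-algebraic statements about the Kronecker module dual to the original one. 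Concretely, the Kronecker module $(H^0(E),H^0(F))$ with arrows $\Hom(E,F)$ and structure map $\ev$ is stable by hypothesis that $E<F$; its kernel module, i.e. the universal subobject $\big(H^0(L_EF),H^0(E)\big)$ obtained by the mutation at the quiver level, is again stable — this is the standard fact that left mutation of a stable Kronecker module is stable, provided $H^0(L_EF)\neq 0$ so the module is nonzero. Translating stability of the kernel Kronecker module back through the definition gives exactly conditions (ii) and (iii) for $L_EF<E$.

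For part (b), I would run the Serre-dual version of the same argument. Under the hypotheses — $F$ simple, coevaluation $\ev^*\colon E\to F\otimes\Hom(E,F)^*$ injective, $H^1(E)=0$, and again $\Hom(F,E)=\Ext^1(F,E)=0$ — apply $\Hom(F,-)$ to the defining sequence $0\to E\to F\otimes\Hom(E,F)^*\to R_FE\to 0$ to get $\Hom(F,R_FE)\cong\Hom(E,F)$ using simplicity of $F$ and the vanishing $\Hom(F,E)=\Ext^1(F,E)=0$. Taking $H^0$ of the defining sequence and using $H^1(E)=0$ gives the short exact sequence $0\to H^0(E)\to H^0(F)\otimes\Hom(E,F)^*\to H^0(R_FE)\to 0$, so $H^0(R_FE)$ is the cokernel, and the evaluation $H^0(F)\otimes\Hom(F,R_FE)\to H^0(R_FE)$ is identified with the natural surjection $H^0(F)\otimes\Hom(E,F)\to H^0(R_FE)$. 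Then $F<R_FE$ amounts to stability of the cokernel Kronecker module obtained by right mutation from the stable module $(H^0(E),H^0(F))$, which again is a standard fact at the level of Kronecker modules, and condition (i) — $H^0(R_FE)\neq 0$ — follows since $h^0(F)\cdot\hom(E,F)>h^0(E)$ already from stability of the original module (strict inequality with $V=H^0(E)$, noting $H^0(E)\subsetneq H^0(E)\otimes\Hom(E,F)^*$ when $\hom(E,F)\geq 2$; the case $\hom(E,F)=1$ is degenerate and handled separately).

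The main obstacle I expect is bookkeeping the natural identifications so that the evaluation map of the mutated pair really is the dual/cokernel of the original one at the level of global sections — in particular checking that no spurious $\Ext^1$ or $H^1$ terms obstruct the relevant exact sequences (this is precisely what the hypotheses $\Hom(F,E)=\Ext^1(F,E)=0$, $H^0(L_EF)\neq0$, and $H^1(E)=0$ are there to rule out), and verifying that simplicity of $E$ (resp. $F$) is exactly what collapses $\Hom(L_EF,E)$ (resp. $\Hom(F,R_FE)$) to a copy of $\Hom(E,F)^*$ (resp. $\Hom(E,F)$) rather than something larger. Once these identifications are in place, the stability statement is the elementary fact that mutations of stable Kronecker modules are stable, which can be proved directly from the defining inequalities by the same kind of slope manipulation used in Lemma \ref{lem-transitive}.
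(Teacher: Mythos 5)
Your proposal is correct and follows essentially the same route as the paper: the identifications $\Hom(L_EF,E)\cong\Hom(E,F)^*$ and $\Hom(F,R_FE)\cong\Hom(E,F)^*$ via the vanishing of $\Hom(F,E)$, $\Ext^1(F,E)$ and simplicity, the identification of the new evaluation maps with the contraction on $\ker(\ev)$ (resp.\ the induced map on $\coker$), and then a stability transfer. The only difference is that you outsource the final step to "mutations of stable Kronecker modules are stable," which is exactly the short slope computation the paper carries out by hand (bounding $\dim U$ by the dimension $rv-w$ of the kernel of $V\otimes\Hom(E,F)\to W$ and invoking condition (c) for $E<F$); since you note this can be done by direct manipulation of the defining inequalities, the argument is complete in substance.
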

\begin{proof}
To prove (a), apply $\Hom(-, E)$ to the sequence (\ref{def-leftmutation}) defining $L_EF$. Since $$\Hom(F,E)=\Ext^1(F,E)=0$$ and $E$ is simple, we obtain an isomorphism 
$$\Hom(L_EF,E) \cong \Hom(E\otimes\Hom(E,F),E) \cong \Hom(E,F)^*.$$
Set $r=\dim(\Hom(E,F))$. Let $0\neq U\subseteq H^0(L_EF)$. Let $V \subseteq H^0(E)$ be the image of $U\otimes \Hom(L_EF,E)$ and let $W\subseteq H^0(F)$ be the image of $V\otimes \Hom(E,F)$. Denote the dimension of a vector space by the corresponding  lowercase letter.  If $V=H^0(E)$,  then $\frac{u}{v} \leq  \frac{h^0(L_EF)}{h^0(E)}$ for any $U$,  and this inequality is strict unless $U=H^0(L_EF)$.  Otherwise, since $E<F$, we have 
\begin{equation}\label{111}
\frac{v}{w}<\frac{h^0(E)}{h^0(F)}.
\end{equation} Let $K\subset H^0(L_EF)$ be the kernel of the map $V\otimes \Hom(E,F)\ra W$. By construction, we have $U\subseteq K$.  Since $\dim(K) = rv-w$, (\ref{111}) implies that \[
\dim(K)< v\left(r-\frac{h^0(F)}{h^0(E)}\right)
\]
so since $U\subseteq K$ and $h^0(L_EF)=rh^0(E)-h_0(F)$ we have
\[
u<v\frac{h^0(L_EF)}{h^0(E)}.
\]
If $U=H^0(L_EF)$ this argument shows that $H^0(L_EF)\otimes \Hom(L_EF,E) \ra H^0(E)$ is surjective. We conclude that $L_E F < E$.

To  prove (b), apply $\Hom(F, -)$ to the sequence (\ref{def-rightmutation}) defining $R_FE$ to see that 
$$\Hom(F, R_FE) \cong \Hom(E,F)^*.$$ The long exact sequence of cohomology associated to (\ref{def-rightmutation}) and our assumption that $H^1(E)=0$ imply that $H^0(F) \otimes \Hom(F, R_FE) \to H^0(R_FE)$ is surjective. Let $0 \neq U \subsetneq H^0(F)$ be a subspace. Let $W \subset H^0(R_FE)$ be the image of $U$ under the canonical evaluation and let $K \subset H^0(E)$ be the kernel.  Consider $K \otimes \Hom(E,F) \to H^0(F)$ and let $V$ be the image, which is a subspace of $U$. Since $E <F$, $u>k \frac{h^0( F)}{h^0(E)}.$ Since $h^0(R_FE) = r h^0(F) - h^0(E)$, $$w= ru -k > u \left(r - \frac{h^0(E)}{h^0(F)} \right) = u \frac{h^0(R_FE)}{h^0(F)}.$$ Thus $F < R_FE$.  
\end{proof}

\begin{definition}
Define a {\em positively constructible strong exceptional collection} for $\PP^n$  to be a strong exceptional collection obtained from the standard strong exceptional collection $$(\OO_{\PP^n}(a), \OO_{\PP^n}(a+1), \dots, \OO_{\PP^n}(a+n)) \  \mbox{for} \ a \geq 0$$ via a sequence of mutations that always preserves $\OO_{\PP^n}(a)$ on the left. 
\end{definition}
By \cite[Theorem 9.2]{Bon89}, mutations in the sense of (\ref{def-leftmutation}) or (\ref{def-rightmutation}) can always be performed starting with this collection, and moreover given $E,F$ with $E$ appearing to the left of $F$ in this category, we have $E\otimes \Hom(E,F)\ra F$ is surjective and $E\ra \Hom(E,F)^*\otimes F$ is injective. As a consequence, we have the following corollary.

\begin{corollary}\label{410}
Let $(E,F)$ be a pair of exceptional bundles fitting into a positively constructible full exceptional collection $(\OO_{\PP^n}(a),\ldots, E, F, \ldots)$ on $\PP^n$.  Let $L_0=E$, $L_1=L_{E} F$, and $L_i=L_{L_{i-1}} L_{i-2}$ for $i\geq 2$. Similarly, let $R_0=F$, $R_1=R_{F}E$, and $R_i=R_{R_{i-1}} R_{i-2}$ for $i \geq 2$. If $E<F$, then  $L_{i}<L_{i-1}$ and $R_{i-1}<R_{i}$ for all $i \geq 1$.
\end{corollary}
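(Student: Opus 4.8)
The plan is to prove both statements simultaneously by induction on $i$, with Proposition~\ref{mutationProp} supplying the inductive step. First I would record the relevant geometry. Starting from $(\OO_{\PP^n}(a),\ldots,E,F,\ldots)$, left-mutating $F$ past $E$ yields the positively constructible collection $(\OO_{\PP^n}(a),\ldots,L_1,L_0,\ldots)$; left-mutating $L_0$ past $L_1$ yields $(\OO_{\PP^n}(a),\ldots,L_2,L_1,\ldots)$; and, continuing, for each $i\geq 1$ one obtains a positively constructible collection in which $L_i$ immediately precedes $L_{i-1}$, while the dual right-mutation chain gives collections in which $R_{i-1}$ immediately precedes $R_i$. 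By \cite[Theorem 9.2]{Bon89} all of these mutations are legitimate, the resulting collections remain full strong exceptional collections of bundles, and whenever $G$ precedes $G'$ in such a collection the evaluation $G\te\Hom(G,G')\ra G'$ is surjective and the coevaluation $G\ra G'\te\Hom(G,G')^*$ is injective. Each term is exceptional, hence simple, and since $G$ precedes $G'$ we have $\Ext^k(G',G)=0$ for all $k\geq 0$. Thus the structural hypotheses of Proposition~\ref{mutationProp}(a),(b) are met automatically at every stage, and only its cohomological hypotheses need attention.

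For $i=1$: Proposition~\ref{mutationProp}(a) applied to $E<F$ gives $L_1=L_EF<E=L_0$ as soon as $H^0(L_EF)\neq 0$, and Proposition~\ref{mutationProp}(b) applied to $E<F$ gives $R_0=F<R_FE=R_1$ as soon as $H^1(E)=0$. For the inductive step: assuming $L_{i-1}<L_{i-2}$, Proposition~\ref{mutationProp}(a) applied to that pair gives $L_i=L_{L_{i-1}}L_{i-2}<L_{i-1}$ provided $H^0(L_i)\neq 0$; and assuming $R_{i-2}<R_{i-1}$, Proposition~\ref{mutationProp}(b) gives $R_{i-1}<R_i$ provided $H^1(R_{i-2})=0$, where $H^0(R_i)\neq 0$ is part of the \emph{conclusion} on the right and so requires no separate verification. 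Hence the whole argument reduces to two families of checks: the vanishing of $H^1$ of the left-hand term at each right-mutation step, and the nonvanishing of $H^0(L_i)$ at each left-mutation step.

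The $H^1$-vanishings I would dispatch using the fact that every term of a positively constructible collection has no higher cohomology: this holds for the standard collection $(\OO_{\PP^n}(a),\dots,\OO_{\PP^n}(a+n))$ with $a\geq 0$, and it is inherited along the mutations (for a left mutation $0\ra L_GG'\ra G\te\Hom(G,G')\ra G'\ra 0$ one gets $H^{\geq 2}(L_GG')=0$ from the long exact sequence, and $H^1(L_GG')$ is the cokernel of $H^0(G)\te\Hom(G,G')\ra H^0(G')$, which is surjective since $G<G'$; the right mutations are dual, and Lemma~\ref{lem-transitive} is available if one needs the pair relations along a general positively constructible path). In particular $H^1(E)=0$, and running the short exact sequences down the $R$-chain gives $H^1(R_{i-2})=0$ for all $i$. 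For $H^0(L_i)\neq 0$ I would use the defining sequence $0\ra L_i\ra L_{i-1}\te\Hom(L_{i-1},L_{i-2})\ra L_{i-2}\ra 0$: since $L_{i-1}<L_{i-2}$, taking global sections and using surjectivity of the induced map gives $h^0(L_i)=h^0(L_{i-1})\cdot\hom(L_{i-1},L_{i-2})-h^0(L_{i-2})$, while feeding a $1$-dimensional subspace of $H^0(L_{i-1})$ into condition (c) of $L_{i-1}<L_{i-2}$ yields $\hom(L_{i-1},L_{i-2})>h^0(L_{i-2})/h^0(L_{i-1})$, so $h^0(L_i)>0$; the same computation handles $H^0(L_EF)\neq 0$. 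I expect the only real friction to lie in this last argument: condition (c) is informative only when the relevant space of sections has dimension at least $2$, so one must note along the way that each $L_j$ (and $E$) has $h^0\geq 2$. This holds because each such bundle is an exceptional bundle lying strictly to the right of $\OO_{\PP^n}(a)$ in a positively constructible collection, hence is not $\OO_{\PP^n}$; its $h^0$ is then positive by the displayed formula, and an exceptional bundle on $\PP^n$ with a one-dimensional space of global sections is $\OO_{\PP^n}$, so $h^0\geq 2$, closing the induction.
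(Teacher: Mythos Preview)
Your overall strategy---induction on $i$ with Proposition~\ref{mutationProp} supplying each step, and Bondal's theorem furnishing the structural hypotheses (simplicity, $\Ext$-vanishing, surjectivity/injectivity of (co)evaluation)---is exactly the paper's argument. The only differences are in how you verify the two cohomological side conditions.

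For $H^0(L_i)\neq 0$ the paper's route is shorter than yours. Since $L_i$ lies strictly to the right of $\OO_{\PP^n}(a)$ in a positively constructible collection, Bondal's surjectivity of the evaluation map $\OO_{\PP^n}(a)\te\Hom(\OO_{\PP^n}(a),L_i)\to L_i$ forces $\Hom(\OO_{\PP^n}(a),L_i)=H^0(L_i(-a))\neq 0$, and then $a\geq 0$ gives $H^0(L_i)\neq 0$. Your quantitative argument via condition~(c) of $L_{i-1}<L_{i-2}$ is valid only when $h^0(L_{i-1})\geq 2$, and your justification for that---``an exceptional bundle on $\PP^n$ with a one-dimensional space of global sections is $\OO_{\PP^n}$''---is not a standard fact and would itself require proof. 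You can sidestep the issue entirely by using the $\Hom(\OO(a),-)$ argument instead.

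For the $H^1$-vanishings, both you and the paper invoke that every bundle in a positively constructible collection has $H^{>0}=0$; the paper simply asserts this as a property of such collections. Your attempted inductive proof along the mutation path from the standard collection has a gap: for a left mutation $0\to L_GG'\to G\te\Hom(G,G')\to G'\to 0$ you need $H^0(G)\te\Hom(G,G')\to H^0(G')$ surjective, which you deduce from $G<G'$---but you have not established $G<G'$ for \emph{arbitrary} adjacent pairs encountered along a general positively constructible path, and Lemma~\ref{lem-transitive} does not supply it (transitivity lets you extend $<$ to the right, not push it back to $L_GG'$). This gap does not affect the $L_i$- and $R_i$-chains themselves, where the needed $<$ relations are part of your induction, but it leaves the base inputs $H^1(E)=H^1(F)=0$ unjustified by your method.
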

\begin{proof}
We proceed by induction on $i$.  If $i=1$, we have that $H^0(L_E F)\not=0$ and $H^1(L_E F)=0$, since it appears to the right of $\OO_{\PP^n}(a)$ in a strong exceptional collection. Hence $L_1 <L_0$ by Proposition \ref{mutationProp}. And if $i\geq 2$, we have that $(\OO_{\PP^n}(a),\ldots, L_{i},L_{i-1},\ldots)$ is a strong exceptional collection, so $L_i$ has $H^0(L_i)\neq 0$ and $H^1(L_i)=0$; from the latter we deduce that $L_{i-1}\otimes \Hom(L_{i-1},L_{i-2})\ra L_{i-2}$ is surjective. In addition, $L_{i-1}<L_{i-2}$ by the induction hypothesis. Then $L_i<L_{i-1}$ by Proposition \ref{mutationProp}.

Since the $R_i$ are all part of some positively constructible strong exceptional collection, we have $H^j(R_i)=0$ for all $i>0$ and $j>0$.   Induction and Proposition \ref{mutationProp} then imply $R_{i-1}<R_i$ for each $i\geq 1$.
\end{proof}
\begin{example}\label{example-tangent}
The Euler sequence exhibits $T \PP^n (e)$ as $R_{\OO_{\PP^n}(e+1)} \OO_{\PP^n}(e)$. By Proposition \ref{mutationProp}, we conclude that $\OO_{\PP^n}(e+1) < T\PP^n(e+1)$. Combining Theorem \ref{prop-linebundle} and Lemma \ref{lem-transitive}, we deduce that  $\OO_{\PP^n}(d)<T\PP^n(e)$ for all $1\leq d\leq e+1$. Similarly, by the Euler sequence, $\Omega_{\PP^n}(d)$ is the left mutation $L_{\OO_{\PP^n}(d-1)}\OO_{\PP^n}(d)$. We conclude that $\Omega_{\PP^n}(d)<\OO_{\PP^n}(e)$ for all $1\leq d \leq e+1$. 
\end{example}

Theorem \ref{mainCohomology} applied to Example \ref{example-tangent} implies the following corollary.

\begin{corollary}\label{cor-tangent}
Let $1\leq d\leq e+1$ and let $V$ be  a vector bundle defined by a general presentation 
\[
0 \to V \ra \Omega_{\PP^n}(d)^s\xrightarrow{M} T\PP^n(e)^t \to 0
\]
If $s$ or $t$ is sufficiently large that the inequality
$$
\max\left\{sh^0(\Omega_{\PP^n}(d)),th^0(T\PP^n(e))\right\}\geq \frac{1}{4}h^0(\Omega_{\PP^n}(d))^3h^0(T\PP^n(e))
$$
is satisfied, then $h^0(V)= \max(0, \chi(V))$, $h^1(V)= \max(0, -\chi(V))$ and $h^i(V)=0$ for $i \geq 2$. 

\end{corollary}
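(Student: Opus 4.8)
The plan is to combine the strong Kronecker stability facts from Example \ref{example-tangent} with the maximal rank theorem, and then convert ``maximal rank on $H^0$'' into the full cohomology statement by using the hypothesis $1\le d\le e+1$ to pin down all higher cohomology of the relevant bundles. First I would record the inputs: by Example \ref{example-tangent}, $\Omega_{\PP^n}(d)<T\PP^n(e)$ whenever $1\le d\le e+1$ (both via the Euler sequence, using transitivity with $\OO_{\PP^n}$). Taking $a=b=1$, $E_1=\Omega_{\PP^n}(d)$, $F_1=T\PP^n(e)$, $s_1=s$, $t_1=t$ in Theorem \ref{mainCohomology}, the displayed hypothesis on $\max\{s\,h^0(\Omega_{\PP^n}(d)),\,t\,h^0(T\PP^n(e))\}$ is exactly the specialization of the general bound (with $\sum_i h^0(E_i)^2 = h^0(\Omega_{\PP^n}(d))^2$ and the two $\max$'s each equal to the single term), so a general $M$ induces a map $M\colon H^0(\Omega_{\PP^n}(d)^s)\to H^0(T\PP^n(e)^t)$ of maximal rank.

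Next I would verify that $V=\ker M$ is locally free and that the higher cohomology of $\Omega_{\PP^n}(d)$ and $T\PP^n(e)$ vanishes in the needed range. Since $1\le d\le e+1$, the bundle $\Omega_{\PP^n}(d)$ has $H^i(\Omega_{\PP^n}(d))=0$ for $i\ge 1$ (Bott vanishing: $H^i(\Omega_{\PP^n}^p(k))\ne 0$ for $i>0$ only when $i=p$ and $k=0$, which is excluded since $d\ge 1$), and likewise $T\PP^n(e)=\Omega_{\PP^n}^{n-1}(e+n+1)$ up to a twist, so for $e\ge 0$ one has $H^i(T\PP^n(e))=0$ for $i\ge 1$; I would also note $H^i(T\PP^n(e))=0$ for $i\ge 2$ already when $e\ge -1$, which covers $d\le e+1$, $d\ge 1$. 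For the general $M$, the map $\Omega_{\PP^n}(d)^s\to T\PP^n(e)^t$ is a general point of a space of maps whose generic behaviour (injective as a sheaf map, locally free cokernel away from small codimension, hence locally free kernel) follows from a standard Bertini/dimension count once $\Hom$ is globally generated in the appropriate sense; I would cite this or dispatch it quickly, since the paper uses the same mechanism for Steiner bundles.

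With those in hand, take the long exact cohomology sequence of $0\to V\to \Omega_{\PP^n}(d)^s\to T\PP^n(e)^t\to 0$. Because $H^i(\Omega_{\PP^n}(d))=H^i(T\PP^n(e))=0$ for $i\ge 1$ in our range, the sequence gives $H^i(V)\cong H^{i-1}(T\PP^n(e)^t)=0$ for $i\ge 2$, so $V$ has cohomology only in degrees $0,1$. The remaining piece
\[
0\to H^0(V)\to H^0(\Omega_{\PP^n}(d)^s)\xrightarrow{M} H^0(T\PP^n(e)^t)\to H^1(V)\to 0
\]
together with maximal rank of $M$ forces $h^0(V)=\max(0,\, h^0(\Omega_{\PP^n}(d)^s)-h^0(T\PP^n(e)^t))$ and $h^1(V)=\max(0,\, h^0(T\PP^n(e)^t)-h^0(\Omega_{\PP^n}(d)^s))$; since $\chi(V)=s\,\chi(\Omega_{\PP^n}(d))-t\,\chi(T\PP^n(e)) = h^0(\Omega_{\PP^n}(d)^s)-h^0(T\PP^n(e)^t)$ by the vanishing just established, this is exactly $h^0(V)=\max(0,\chi(V))$ and $h^1(V)=\max(0,-\chi(V))$. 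The main obstacle, such as it is, is not the cohomology bookkeeping but checking that the relevant Bott-type vanishing holds throughout $1\le d\le e+1$ and that the general map is a sheaf injection with locally free kernel; both are routine but must be stated carefully to make the long exact sequence argument clean.
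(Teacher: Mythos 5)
Your proposal is correct and takes essentially the same route as the paper, which derives this corollary directly from Theorem \ref{mainCohomology} applied to the relation $\Omega_{\PP^n}(d)<T\PP^n(e)$ obtained by chaining Example \ref{example-tangent} through Lemma \ref{lem-transitive}; your specialization of the numerical bound (with $a=b=1$) and the Bott-vanishing bookkeeping in the long exact sequence are exactly the details the paper leaves implicit. The only caveat is the degenerate case $d=1$, where $h^0(\Omega_{\PP^n}(1))=0$ so the strong Kronecker stability relation cannot literally hold, but there the stated cohomology conclusion follows trivially from the long exact sequence.
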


One can continue this process by taking further mutations to show that kernels and cokernels of general maps between arbitrarily complicated vector bundles have expected cohomology. 

It would be interesting to have a characterization of when two bundles satisfy $E < F$. For example, we raise the following questions.

\begin{question}
If $E$ and $F$ are two exceptional bundles on $\PP^n$ contained in the same helix and satisfying $0 < \mu(E) < \mu(F)$,  is $E < F$? 
\end{question}

\begin{question}
Let $E,F$ be two vector bundles on $\PP^n$. Does there exist integers $e$ and $f$ such that $E(e) < F(f)$?
\end{question}

\section{Rank \texorpdfstring{$n$}{n} kernel bundles}\label{sec-rankn}
In this section, we compute the cohomology of certain rank $n$ kernel bundles and verify Conjecture \ref{conj-steiner} for bundles on $\PP^n$ whose ranks are a multiple of $n$. The main technical observation is the following.

\begin{proposition}\label{rankn}
Let $V$ be a rank $n$ vector bundle on $\PP^n$ defined as the kernel of a surjective map of vector bundles $f: V_1 \to V_2$.
Let $d=\deg(V_1)-\deg(V_2)$. Set $$U_i=\Sym^i(V_2^*)\otimes \Lambda^{n-i-1}(V_1^*) (d) \quad \mbox{for} \ 0\leq i\leq n-1.$$ Suppose $H^{i+1}(U_i)=0$ for $0 \leq i \leq n-2$. Then
\[
h^0(V)=\sum_{i=0}^{n-1} \sum_{j=0}^i(-1)^{i+j}h^j(U_i). 
\]
If additionally $H^j(V_1)=H^j(V_2)=0$ for $j \geq 1$, then $H^j(V)=0$ for $j \geq 2$ and 
\[
h^1(V)=\sum_{i=0}^{n-1}\sum_{j=i+1}^n (-1)^{i+j+1}h^j(U_i) 
\]
\end{proposition}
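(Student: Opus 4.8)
The plan is to build a Koszul-type resolution of $V = \ker f$ out of the surjection $f\colon V_1 \to V_2$ and then run a hypercohomology spectral sequence. First I would observe that $V$ has rank $n$ and sits in the short exact sequence $0 \to V \to V_1 \xrightarrow{f} V_2 \to 0$, so the dual sequence $0 \to V_2^* \to V_1^* \to V^* \to 0$ realizes $V^*$ as the quotient of $V_1^*$ by a rank-$(\rk V_1 - n)$ subbundle; here the key point is that $\Lambda^n V^* \cong (\det V_1 \otimes \det V_2^{-1})^{-1} = \OO_{\PP^n}(-d)$ is a line bundle, i.e. $V^*$ has a clean top exterior power. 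Dualizing the Eagon--Northcott / Buchsbaum--Rim style complex, or more concretely using the long exact sequences obtained by taking exterior powers of the short exact sequence $0 \to V_2^* \to V_1^* \to V^* \to 0$, one gets an exact complex resolving $\Lambda^n V^*(d) \cong \OO_{\PP^n}$, hence $V \cong \Lambda^{n-1}V^*(d)$, by terms $\Sym^i(V_2^*) \otimes \Lambda^{n-i-1}(V_1^*)(d) = U_i$ for $i = 0, \dots, n-1$. Concretely: splice together the short exact sequences
$$0 \to \Sym^{i+1}(V_2^*)\otimes \Lambda^{n-i-1}(V_1^*) \to \Sym^i(V_2^*)\otimes \Lambda^{n-i}(V_1^*) \to \Sym^i(V_2^*)\otimes \Lambda^{n-i}(V^*) \to 0$$
to identify $V \otimes \OO(-d) \cong \Lambda^{n-1}V^*$ with the (single) cohomology sheaf of the complex $U_{n-1} \to U_{n-2} \to \cdots \to U_1 \to U_0$ placed in degrees $-(n-1), \dots, 0$, all twisted by $\OO(d)$.

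Next I would set up the hypercohomology spectral sequence of this complex $U_\bullet$ (with $\cH^0(U_\bullet) = V$ and all other cohomology sheaves zero), which has $E_1^{p,q} = H^q(U_{-p})$ converging to $H^{p+q}(V)$. The hypotheses $H^{i+1}(U_i) = 0$ for $0 \le i \le n-2$ are precisely what is needed to kill the "diagonal" $E_1$-terms that would otherwise obstruct reading off $h^0$ and $h^1$. For the first formula, I would chase the spectral sequence to degree $0$: $h^0(V)$ equals the alternating sum over the relevant $E_1^{p,q}$ surviving to total degree $0$, which after the vanishing hypotheses collapses to $\sum_{i=0}^{n-1}\sum_{j=0}^{i}(-1)^{i+j}h^j(U_i)$ — one should check that every differential into or out of these terms either vanishes (by the hypothesis) or is accounted for by Euler-characteristic bookkeeping. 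For the second part, adding the hypothesis $H^j(V_1) = H^j(V_2) = 0$ for $j \ge 1$ forces $H^j(U_i) = 0$ for $j$ in a large range (since each $U_i$ is an extension of pieces built from higher symmetric/exterior powers of $V_1$ and $V_2$ with no higher cohomology), which gives $H^j(V) = 0$ for $j \ge 2$ directly from the spectral sequence, and then the analogous chase in total degree $1$ yields the stated formula for $h^1(V)$.

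The main obstacle I anticipate is the careful bookkeeping in the spectral sequence: verifying that under the stated vanishing hypotheses the higher differentials $d_r$ ($r \ge 2$) contribute nothing to the total-degree-$0$ and total-degree-$1$ pieces beyond what the alternating sum already records, and that no unwanted surviving terms appear on the wrong anti-diagonal. A clean way to package this is to note that $\chi(V) = \sum_i (-1)^i \chi(U_i)$ automatically (Euler characteristics are additive in exact complexes), so that the two displayed formulas together are forced once one knows (i) $H^j(V) = 0$ for $j \ge 2$ and (ii) at least one of $H^0(V), H^1(V)$ is computed correctly by its alternating sum; thus the real content is isolating which $E_1$ terms survive, and the hypotheses $H^{i+1}(U_i)=0$ are exactly tailored to make the off-diagonal terms on the $q = \deg$ line disappear. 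I would also need to double-check the exactness of the Koszul-type complex built from $0 \to V_2^* \to V_1^* \to V^* \to 0$ — this is where the locally-free and constant-rank hypothesis on $V$ (rank exactly $n$, $f$ surjective) is genuinely used, since it guarantees $V^*$ is a bundle and the exterior-power sequences stay exact.
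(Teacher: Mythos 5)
Your overall strategy is essentially the paper's: both arguments rest on the isomorphism $V \cong \Lambda^{n-1}(V^*)(d)$ and the resulting Eagon--Northcott/Buchsbaum--Rim-type resolution $0 \to U_{n-1} \to \cdots \to U_0 \to V \to 0$ (the paper cites Weyman for this), followed by a cohomology chase in which the hypotheses $H^{i+1}(U_i)=0$ kill exactly the obstructing terms. The paper performs the chase by splicing the resolution into short exact sequences with intermediate kernels and inducting; you package the identical bookkeeping as the hypercohomology spectral sequence $E_1^{p,q}=H^q(U_{-p})\Rightarrow H^{p+q}(V)$. That version does close up: the hypotheses kill every total-degree-one $E_1$-term except $H^n(U_{n-1})$, so all differentials leaving the antidiagonal $p+q=0$ vanish, and since $E_\infty^{p,q}=0$ for $p+q<0$ the alternating sum over the region $p+q\le 0$ is preserved from $E_1$ to $E_\infty$ and equals $h^0(V)$, which is the displayed formula. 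Your remark that the $h^1$ formula is then forced by $\chi(V)=\sum_i(-1)^i\chi(U_i)$ once one knows $H^j(V)=0$ for $j\ge 2$ is also exactly how the paper concludes.

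Two steps need repair. First, your justification of $H^j(V)=0$ for $j\ge 2$ is wrong: you assert that $H^j(U_i)=0$ for $j$ "in a large range" because $V_1,V_2$ have no higher cohomology, but the $U_i$ involve duals and twists of $V_1,V_2$, and in the intended application they are sums of line bundles $\OO_{\PP^n}(a-t-i)$ which can have nonzero $H^n$; no such vanishing holds. The correct one-line argument is the long exact sequence of the defining sequence $0\to V\to V_1\to V_2\to 0$: for $j\ge 2$ one has $H^{j-1}(V_2)\to H^j(V)\to H^j(V_1)$ with both outer terms zero. Second, the three-term sequences you propose to splice are not short exact: the kernel of $\Sym^i(V_2^*)\otimes\Lambda^{n-i}(V_1^*)\to\Sym^i(V_2^*)\otimes\Lambda^{n-i}(V^*)$ is filtered by all the pieces $\Sym^i(V_2^*)\otimes\Lambda^{a}(V_2^*)\otimes\Lambda^{n-i-a}(V^*)$ with $a\ge 1$, not just the single term you write, so the left-hand map is not an isomorphism onto that kernel. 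The complex $U_\bullet\to V\to 0$ is nonetheless exact --- it is the Buchsbaum--Rim complex of the surjection $V_1^*\to V^*$ --- but it should be invoked as such (as the paper does, via Weyman) rather than derived by this splicing.
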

\begin{proof}
Since $V$ is a vector bundle, we have an isomorphism $V\cong \Lambda^{n-1}(V^*)(d)$, and consequently we have the presentation of $V$ given by \cite[\S 2]{Wey03}
\[
0\ra U_{n-1}\ra U_{n-2}\ra \cdots \ra U_0\ra V\ra 0.
\]
For $0\leq i\leq n-3$, define $W_i$ by the exact sequences
\[
0\ra W_0\ra U_0\ra V \ra 0
\]
and 
\[
0\ra W_i\ra U_i\ra W_{i-1}\ra 0.
\]
We claim
\begin{equation}\label{151}
\sum_{j=0}^{i+1} (-1)^jh^j(W_i)=\sum_{k=i+1}^{n-1}\sum_{j=0}^k (-1)^{i+j+k+1}h^j(U_k).
\end{equation}
The proof is by downward descent on $i$. Since $H^{n-1}(U_{n-2})=0$, the  exact sequence 
\[
0\ra U_{n-1}\ra U_{n-2}\ra W_{n-3} \ra 0,
\]
induces the long exact sequence 
\[
0\ra H^0(U_{n-1})\ra H^0(U_{n-2})\ra H^0(W_{n-3})\ra \cdots \ra H^{n-1}(U_{n-1})\ra 0
\]
and (\ref{151}) follows when $i=n-3$. To induct, assume (\ref{151}) is true for $i>0$. Since $H^{i+1}(U_i)=0$, the exact sequence
\[
0\ra W_i\ra U_i\ra W_{i-1}\ra 0,
\]
induces the long exact sequence
\[
0\ra H^0(W_i)\ra H^0(U_i)\ra H^0(W_{i-1})\ra\cdots \ra H^{i+1}(W_i)\ra 0,
\]
which in turn gives
\[
\sum_{j=0}^i (-1)^jh^j(W_{i-1})=\sum_{j=0}^i (-1)^jh^j(U_i)+\sum_{j=0}^{i+1} (-1)^{j+1}h^j(W_{i}).
\]
Applying the induction hypothesis to the right-hand side gives (\ref{151}).

Finally, from the exact sequence 
\[
0\ra W_0\ra U_0\ra V\ra 0
\]
we have the long exact sequence
\[
0\ra H^0(W_0)\ra H^0(U_0)\ra H^0(V)\ra H^1(W_0)\ra 0
\]
so 
\[
h^0(V)=h^0(U_0)-h^0(W_0)+h^1(W_0).
\]
Applying (\ref{151}) with $i=0$ gives the formula for $h^0(V)$ in the proposition.

If furthermore $H^j(V_1) = H^j(V_2)=0$ for $j \geq 1$, then $H^j(V)=0$ for $j \geq 2$. We then have
\[
\chi(V) = h^0(V)-h^1(V)=\sum_{i=0}^ {n-1}(-1)^i\chi(U_i).
\]
Substituting the expression for $h^0(V)$, we obtain the expression for $h^1(V)$.
\end{proof}
\begin{corollary}\label{cor-rankn}
Let $V$ be a rank $n$ bundle defined by one of the following sequences $$0 \to V \to \OO_{\PP^n}^{t+n} \to \OO_{\PP^n}(1)^t \to 0, \ \mbox{or}$$  $$0 \to \OO_{\PP^n}^{t} \to \OO_{\PP^n}(1)^{t+n} \to V   \to 0.$$ Then $V$ has natural cohomology.
\end{corollary}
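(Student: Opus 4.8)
The plan is to reduce natural cohomology of $V$ to a cohomology vanishing at a single twist via Lemma \ref{lem-basicbound}, and then to compute that twist using Proposition \ref{rankn}. First I would handle the second presentation. Since $V$ is locally free, dualizing $0 \to \OO_{\PP^n}^{t} \to \OO_{\PP^n}(1)^{t+n} \to V \to 0$ and twisting by $\OO_{\PP^n}(1)$ yields $0 \to V^*(1) \to \OO_{\PP^n}^{t+n} \to \OO_{\PP^n}(1)^t \to 0$, a presentation of the first type with the same $t$. As natural cohomology is preserved under twists and, by Serre duality, holds for $V$ if and only if it holds for $V^*$ (see the remark following Conjecture \ref{conj-steiner}), it suffices to treat the first presentation. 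There we may assume $t \geq 1$, as $t = 0$ gives $V = \OO_{\PP^n}^n$. Since $\rk V = n$, in Lemma \ref{lem-basicbound} we have $\alpha = \beta = t$, and that lemma reduces natural cohomology of $V$ to the two vanishings $H^0(V(t-1)) = 0$ and $H^1(V(t-1)) = 0$.

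To establish these, I would apply Proposition \ref{rankn} to $V(t-1) = \ker\bigl(\OO_{\PP^n}(t-1)^{t+n} \to \OO_{\PP^n}(t)^{t}\bigr)$, taking $V_1 = \OO_{\PP^n}(t-1)^{t+n}$, $V_2 = \OO_{\PP^n}(t)^{t}$, so $d = c_1(V_1) - c_1(V_2) = n(t-1) - t$. Each $U_i = \Sym^i(V_2^*) \otimes \Lambda^{n-i-1}(V_1^*)(d)$ is a direct sum of line bundles, and a short computation of the twist shows $U_i \cong \OO_{\PP^n}(-i-1)^{N_i}$ with $N_i = \binom{t+i-1}{i}\binom{t+n}{n-i-1}$. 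For $0 \leq i \leq n-1$ one has $-i-1 \in \{-n,\dots,-1\}$, so $\OO_{\PP^n}(-i-1)$ is acyclic; in particular $H^{i+1}(U_i) = 0$ for $0 \leq i \leq n-2$, verifying the hypothesis of Proposition \ref{rankn}, while $H^{j}(V_1) = H^{j}(V_2) = 0$ for $j \geq 1$ since $t-1 \geq 0$. The formulas of Proposition \ref{rankn} then express $h^0(V(t-1))$ and $h^1(V(t-1))$ as alternating sums of the $h^j(U_i)$, all of which vanish, so $V(t-1)$ is acyclic, and Lemma \ref{lem-basicbound} concludes.

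I do not anticipate a genuine obstacle. The only step calling for care is the twist bookkeeping showing that every $U_i$ collapses to $\OO_{\PP^n}(-i-1)$; the crux is that these exponents land precisely in the range $\{-n,\dots,-1\}$ where line bundles on $\PP^n$ have vanishing cohomology, which makes both the hypothesis and the output of Proposition \ref{rankn} immediate in this case.
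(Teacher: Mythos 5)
Your proposal is correct and follows essentially the same route as the paper: reduce to the kernel presentation via Serre duality, use Lemma \ref{lem-basicbound} (with $\alpha=\beta=t$) to reduce to the single twist $V(t-1)$, and then apply Proposition \ref{rankn}, observing that each $U_i\cong \OO_{\PP^n}(-i-1)^{N_i}$ is acyclic since $-n\leq -i-1\leq -1$. The only cosmetic difference is that the paper computes $U_i\cong\OO_{\PP^n}(a-t-i)^{N_i}$ for a general twist $a$ and reads off the vanishing pattern, whereas you specialize immediately to $a=t-1$; both are complete.
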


\begin{proof}
The corollary for kernel and cokernel bundles are equivalent by Serre duality, so we prove the statement for kernel bundles. 
We need to show that for any integer $a$, the twist $V(a)$ has at most one nonzero cohomology group. By Lemma \ref{lem-basicbound}, it suffices to check that the cohomology vanishes when $a= t-1$.  Now we may apply Proposition \ref{rankn}. In our case, $d=an-t$ and the bundle $U_i=\Sym^i(V_2^*)\otimes \Lambda^{n-i-1}(V_1^*) (d)$  is a direct sum of copies of the line bundle $\OO_{\PP^n}(a-t-i)$. In particular, the nonzero cohomology of all the vector bundles $U_i$ are  in degree $0$ if $a-t \geq 0$ and in degree $n$ if $a-t\leq -2$. All the cohomology vanishes if $a-t=-1$. Since 
$$h^0(V(a))=\sum_{i=0}^{n-1} \sum_{j=0}^{i}(-1)^{i+j}h^j(U_i),$$
we conclude that $h^0(V((a))=0$ if $a-t \leq -1$. Similarly, since
$$h^1(V)=\sum_{i=0}^{n-1}\sum_{j=i+1}^n (-1)^{i+j+1}h^j(U_i),$$ we conclude that $h^1(V)=0$ if $a-t \geq -1$. This concludes the proof.
\end{proof}

\begin{corollary}\label{cor-divisiblebyn}
A general Steiner bundle---or dual of a Steiner bundle---on $\PP^n$ of rank $kn$ has natural cohomology. In particular, Conjecture \ref{conj-steiner} is true if the rank of $V$ is a multiple of $n$.
\end{corollary}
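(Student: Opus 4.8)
The plan is to reduce Corollary \ref{cor-divisiblebyn} to the rank-$n$ case already handled in Corollary \ref{cor-rankn} by a scaling-and-degeneration argument. First I would recall that by the second remark following Conjecture \ref{conj-steiner}, the naturality of a Steiner bundle $V$ is equivalent to the naturality of its dual $V^*$, so it suffices to treat Steiner bundles $V$ of rank $kn$, presented as
\[
0 \to \OO_{\PP^n}(-1)^t \to \OO_{\PP^n}^{t+kn} \to V \to 0,
\]
equivalently (after twisting/dualizing) as kernel bundles $0 \to V^* \to \OO_{\PP^n}^{t+kn} \to \OO_{\PP^n}(1)^t \to 0$. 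We want the general such bundle to have natural cohomology. By Lemma \ref{lem-basicbound} it is enough to check the two vanishings $H^0(V^*(\beta-1)) = 0$ and $H^1(V^*(\alpha-1)) = 0$ for $\alpha = \lceil tn/(kn)\rceil$, $\beta = \lfloor tn/(kn)\rfloor$; note $tn/(kn) = t/k$, so these twists are controlled by $t$ and $k$ alone.

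The key idea is that a rank-$kn$ Steiner bundle can be degenerated to a direct sum of $k$ general rank-$n$ Steiner bundles, and semicontinuity of cohomology then transfers the needed vanishings. Concretely, choose nonnegative integers $t_1, \ldots, t_k$ with $\sum t_\ell = t$; then the direct sum $W = \bigoplus_{\ell=1}^k W_\ell$, where each $W_\ell$ is a general rank-$n$ kernel bundle fitting in $0 \to W_\ell \to \OO_{\PP^n}^{t_\ell + n} \to \OO_{\PP^n}(1)^{t_\ell} \to 0$, is itself a kernel bundle of a map $\OO_{\PP^n}^{t+kn} \to \OO_{\PP^n}(1)^t$, namely the block-diagonal one. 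Since the locus of such maps sits inside $\Hom(\OO_{\PP^n}^{t+kn}, \OO_{\PP^n}(1)^t)$, which is irreducible, and since being a kernel bundle (i.e., the map being surjective as a sheaf map) is an open condition, semicontinuity of $h^0$ and $h^1$ gives that the general Steiner bundle $V^*$ of rank $kn$ satisfies $h^i(V^*(a)) \le h^i(W(a))$ for all $i$ and $a$. By Corollary \ref{cor-rankn} each $W_\ell$ has natural cohomology, so in particular $H^0(W_\ell(\beta - 1)) = H^1(W_\ell(\alpha-1)) = 0$ provided the arithmetic of $\beta, \alpha$ relative to $t_\ell$ cooperates; the point is that one should be able to choose the $t_\ell$ (for instance all equal, or as close to equal as possible) so that the relevant twist $a = \beta - 1$ forces $\chi(W_\ell(a)) \le 0$ and $a = \alpha-1$ forces $\chi(W_\ell(a)) \ge 0$ for each summand, matching the global inequality. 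Then $H^0(W(\beta-1)) = H^1(W(\alpha-1)) = 0$, hence the same holds for general $V^*$, and Lemma \ref{lem-basicbound} concludes.

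The main obstacle I anticipate is the bookkeeping in the previous sentence: verifying that the threshold twists $\alpha - 1$ and $\beta - 1$ computed from the global data $(t, kn)$ are compatible with the vanishing ranges of the individual rank-$n$ summands computed from $(t_\ell, n)$. Because $tn/(kn) = t/k$ while $t_\ell n / n = t_\ell$, one needs $t_\ell$ near $t/k$, which forces an unequal split when $k \nmid t$; one must then check that for the summands with $t_\ell = \lceil t/k\rceil$ and those with $t_\ell = \lfloor t/k \rfloor$ the sign of $\chi(W_\ell(\beta-1))$ and $\chi(W_\ell(\alpha-1))$ is still correct. A clean way to sidestep delicate casework is to instead degenerate not to a direct sum but to use Corollary \ref{corOneLineBundle} directly on the two twists: i.e., show $H^0(V^*(\beta-1)) = 0$ and $H^1(V^*(\alpha-1)) = 0$ by noting these reduce, via the defining sequence of $V^*$, to maximal-rank statements for general maps $\OO_{\PP^n}(\beta-1)^{\oplus\bullet}$-type data — but since here the rank $kn$ is fixed rather than growing, Corollary \ref{corOneLineBundle} does not literally apply, which is precisely why the degeneration to rank-$n$ pieces (where Corollary \ref{cor-rankn} gives an \emph{unconditional} statement) is the right tool. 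So I expect the proof to be short modulo one careful lemma reconciling the two arithmetic regimes, and I would isolate that lemma rather than inline it.
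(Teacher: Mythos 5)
Your proposal is correct and is essentially the paper's proof: the paper writes $t=uk+v$, specializes the defining matrix to block-diagonal form so that $V$ degenerates to $V_1^{\oplus(k-v)}\oplus V_2^{\oplus v}$ with $V_1,V_2$ general rank-$n$ kernel bundles with parameters $u$ and $u+1$, and then applies Corollary \ref{cor-rankn}, semicontinuity, and Lemma \ref{lem-basicbound}, exactly as you outline. The arithmetic you deferred does check out: a general rank-$n$ summand $W_\ell$ with parameter $t_\ell$ satisfies $H^0(W_\ell(a))=0$ for $a\le t_\ell-1$ and $H^1(W_\ell(a))=0$ for $a\ge t_\ell-1$, so with $t_\ell\in\{u,u+1\}$ both required twists $\beta-1=u-1$ and $\alpha-1\in\{u-1,u\}$ lie in the correct ranges for every summand.
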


\begin{proof}
Let $V$ be a general  bundle of rank $kn$ defined by a sequence
$$0 \to V \to \OO_{\PP^n}^{t+kn} \to \OO_{\PP^n}(1)^t \to 0.$$ Using the division algorithm, write $t= uk + v$, where $0 \leq v < k$. 
By Lemma \ref{lem-basicbound}, we need to show that $H^0(V(a))=0$ for $a\leq u-1$ and $H^1(V(a))=0$ for $a \geq u$. 
Let $V_1$ be a  bundle defined by a sequence
$$0 \to V_1 \to \OO_{\PP^n}^{u+n} \to \OO_{\PP^n}(1)^u \to 0$$ and let $V_2$ be a  bundle defined by a sequence
 $$0 \to V_2 \to \OO_{\PP^n}^{u+1+n} \to \OO_{\PP^n}(1)^{u+1} \to 0.$$
Then, by Corollary \ref{cor-rankn}, $V_1$ and $V_2$ have natural cohomology. More precisely, $H^0(V_1(a))=0$ for $a \leq u-1$ and $H^1(V_1(a))=0$ for $a \geq u-1$. Similarly, $H^0(V_2(a))=0$ for $a \leq u$ and $H^1(V_2(a))=0$ for $a \geq u$.  By specializing the matrix defining $V$ to a block diagonal form, we can specialize $V$ to the bundle $V' \cong V_1^{\oplus k-v} \oplus V_2^{\oplus v}$. Then $H^0(V'(a))=0$ if $a \leq u-1$ and $H^1(V'(a))=0$ if $a \geq u$. By Lemma \ref{lem-basicbound}, $V'$ has natural cohomology. In particular, the general  bundle $V$ satisfies $H^0(V(u-1))=0$ and $H^1(V(u))=0$ as well by the semicontinuity of cohomology.  So all twists of $V$ have at most one nonvanishing cohomology group.

Applying Serre duality to calculate the cohomology of the dual bundles $V^*$, we have the general Steiner bundle of rank $kn$ has natural cohomology as well.
\end{proof}
\begin{remark}\label{rem-infinite}
A vector bundle of rank $n$ given by a general presentation of the form 
$$0 \to V \to \OO_{\PP^n}^{t + n} \to \OO_{\PP^n}(d)^t \to 0$$ only has natural cohomology if $d=1$ or if $n=2$ and $d=2$. Consider $V(a)$. In Proposition \ref{rankn}, $U_i$ is a direct sum of line bundles of degree $a-dt-im$. If $a -dt \geq 0$ and $a-dt - (n-1)d \leq -n-1$, then $V(a)$ has nonzero $H^0$ and $H^1$. We may take $a =dt$. If $(n-1)d \geq n+1$, both inequalities are satisfied. This inequality holds when $d>2$ or $d=2$ and $n>2$. Hence, we can find infinitely many examples like Example \ref{ex-scaling}. When $n=2$ and $d=2$, then $V$ does have natural cohomology because $V$ is a general point of the stack of prioritary sheaves on $\PP^2$ given by a Gaeta resolution \cite{CH18, CH20}.
\end{remark}

We remark that the same technique as in Proposition \ref{rankn} computes most of the cohomology of twists of Steiner bundles and their duals.

\begin{proposition}
Let $V$ be a vector bundle on $\PP^n$ defined by the sequence
\[
0\ra V\ra \OO_{\PP^n}^{t+r}\ra \OO_{\PP^n}(1)^t\ra 0
\]
with $n\leq r< 2n$.  Then  $H^1(V(d))=0$ if $d\geq t-1$.
\end{proposition}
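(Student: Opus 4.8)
The plan is to follow the strategy of Proposition \ref{rankn}: I will resolve a suitable exterior power of $V^*$ by direct sums of line bundles, split the resolution into short exact sequences, and chase cohomology through them. First I observe that $V$ has rank $r$ with $\det V \cong \OO_{\PP^n}(-t)$, so $V \cong \Lambda^{r-1}(V^*) \otimes \OO_{\PP^n}(-t)$ and hence $V(d) \cong \Lambda^{r-1}(V^*)(d-t)$. Dualizing the defining sequence exhibits $V^*$ as a rank-$r$ quotient of $\OO_{\PP^n}^{t+r}$ with kernel $\OO_{\PP^n}(-1)^t$, so the Weyman presentation of \cite[\S 2]{Wey03} (exactly as used in Proposition \ref{rankn}) provides an exact complex
\[
0 \to U_{r-1} \to U_{r-2} \to \cdots \to U_0 \to V(d) \to 0,
\]
in which $U_i = \Sym^i(\OO_{\PP^n}(-1)^t) \otimes \Lambda^{r-1-i}(\OO_{\PP^n}^{t+r}) \otimes \OO_{\PP^n}(d-t)$ is a direct sum of copies of $\OO_{\PP^n}(d-t-i)$, for $0 \leq i \leq r-1$. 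I will then break this complex into short exact sequences $0 \to W_0 \to U_0 \to V(d) \to 0$ and $0 \to W_j \to U_j \to W_{j-1} \to 0$ for $1 \leq j \leq r-2$, with $W_{r-2} \cong U_{r-1}$.

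The heart of the argument is the observation that when $d \geq t-1$ the twist satisfies $d-t-i \geq -(i+1)$ for every $i$, and this forces $H^{j+1}(U_j) = H^{j+2}(U_j) = 0$ for $0 \leq j \leq r-2$, and also $H^r(U_{r-1}) = 0$. Indeed, the cohomology of a direct sum of line bundles on $\PP^n$ is concentrated in $H^0$ of nonnegative twists and $H^n$ of twists at most $-n-1$; among the degrees in question, the only ones that can equal the top degree $n$ arise for $j = n-1$ and $j = n-2$, and in those cases the twist $d-t-i$ is at least $-n$, respectively $-n+1$, which still exceeds $-n-1$. Chasing the long exact cohomology sequences of the short exact sequences above then produces a chain of isomorphisms
\[
H^1(V(d)) \cong H^2(W_0) \cong H^3(W_1) \cong \cdots \cong H^r(W_{r-2}) = H^r(U_{r-1}),
\]
and the right-hand term vanishes: if $r > n$ because $H^r$ of any coherent sheaf on $\PP^n$ is zero, and if $r = n$ because $H^n(\OO_{\PP^n}(d-t-n+1)) = 0$ once $d-t-n+1 \geq -n$, which holds since $d \geq t-1$. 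This yields $H^1(V(d)) = 0$.

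The main obstacle is entirely bookkeeping: keeping track of exactly which cohomological degree and which twist occur at each stage of the chain, and checking that the vanishings needed to pass from $W_{j-1}$ to $W_j$ are precisely those guaranteed by $d \geq t-1$. The substantive input—the exactness of the Weyman complex—is identical to the one already invoked in Proposition \ref{rankn}, and the reason the threshold lands exactly at $d = t-1$ is the elementary statement about line bundle cohomology on $\PP^n$.
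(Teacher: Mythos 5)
Your proposal is correct and follows essentially the same route as the paper: the Weyman resolution of $\Lambda^{r-1}(V^*)$ by direct sums of line bundles, the splitting into short exact sequences $0\to W_j\to U_j\to W_{j-1}\to 0$, and the resulting chain of (injections of) cohomology groups terminating in a twist of $\OO_{\PP^n}$ whose top cohomology vanishes precisely when $d\geq t-1$. The only cosmetic difference is that the paper stops the chain at $H^n(W_{n-2})$ and uses that $W_{n-2}$ is a quotient of $U_{n-1}$, while you continue to $H^r(W_{r-2})=H^r(U_{r-1})$; both are valid and amount to the same bookkeeping.
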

\begin{proof}

As in the proof of Proposition \ref{rankn},  we exploit the isomorphism $V\cong \Lambda^{r-1}(V^*)(\mathrm{det}(V))$. For $0\leq i\leq r-1$, set 
\[
U_i=\Sym^i(\OO(-1)^t)\otimes \Lambda^{r-i-1}(\OO_{\PP^n}^{r+t})\otimes \OO_{\PP^n}(-t)
\]
so $U_i\cong \OO_{\PP^n}(-t-i)^{r_i}$
 where $r_i$ is given by
\[
r_i=\binom{t+i-1}{i}\binom{r+t}{r-i-1}.
\]
Then we have a long exact sequence
\[
0\ra U_{r-1}(d)\ra \cdots \ra U_0(d)\ra V(d)\ra 0.
\]
Define $W_i$ as the kernel of the map $U_i\ra U_{i-1}$ for $i\geq 1$ and the kernel of the map $U_0\ra V$ if $i=0$. Since $H^j(U_i(d))=0$ for any $j\notin \{0,n\}$, the same argument as in Proposition \ref{rankn} shows $H^1(V(d))\cong H^2(W_0(d))\cong \ldots \cong H^n(W_{n-2}(d))$.  If $d\geq  t-1$, then $H^n(U_{n-1}(d))=0$, so since $W_{n-2}$ is a quotient of $U_{n-1}$, we have $H^1(V(d))=0$ as well.  
\end{proof}
\begin{remark}
This proof uses that $V$ has homological dimension $n-1$ to conclude that the cohomology is natural. This approach is similar to the one used  in \cite{MS17} to describe the cohomology of vector bundles of homological dimension less than $n$.
\end{remark}
\begin{corollary}
Let $V$ be a general vector bundle given by presentation
\[
0\ra V\ra \OO_{\PP^n}^{r+t}\ra\OO_{\PP^n}(1)^t \ra 0
\]and suppose $r=nk+\delta$ with $0\leq \delta <n$.  Then $H^0(V(d))=0$ if $d\leq\frac{t}{k+1}-1$ and $H^1(V(d))=0$ if $d\geq \frac{t}{k}-1$.
\end{corollary}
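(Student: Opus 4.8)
The plan is to reduce the statement to the rank-divisible-by-$n$ case established in Corollary \ref{cor-divisiblebyn}, by sandwiching $V$ between a general kernel bundle of rank $nk$ and one of rank $n(k+1)$, each presented with the same number $t$ of copies of $\OO_{\PP^n}(1)$. This also explains the two bounds: $\tfrac{t}{k+1}-1$ and $\tfrac{t}{k}-1$ are exactly the thresholds $\tfrac{tn}{\tilde r}-1$ at which $\chi(\widetilde V(d))$ changes sign for a rank-$\tilde r$ kernel bundle $\widetilde V$ with $t$ copies of $\OO_{\PP^n}(1)$, for $\tilde r=n(k+1)$ and $\tilde r=nk$ respectively.

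For the vanishing of $H^0$: I would view the given general $M\colon \OO_{\PP^n}^{r+t}\to \OO_{\PP^n}(1)^t$ as the restriction of a general $\widetilde M\colon \OO_{\PP^n}^{n(k+1)+t}\to \OO_{\PP^n}(1)^t$ along a coordinate inclusion $\OO_{\PP^n}^{r+t}\hookrightarrow \OO_{\PP^n}^{n(k+1)+t}=\OO_{\PP^n}^{r+t}\oplus\OO_{\PP^n}^{n-\delta}$; since the first $r+t$ columns of a general $\widetilde M$ form a general map $M$, it suffices to treat $V=\ker M$ for such $M$. As $r\ge n$, the map $M$ is surjective, so the composite of $\widetilde V:=\ker\widetilde M\hookrightarrow \OO_{\PP^n}^{r+t}\oplus\OO_{\PP^n}^{n-\delta}$ with the second projection is surjective with kernel $V$, giving $0\to V\to \widetilde V\to \OO_{\PP^n}^{n-\delta}\to 0$. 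Twisting by $\OO_{\PP^n}(d)$ and passing to cohomology yields an inclusion $H^0(V(d))\hookrightarrow H^0(\widetilde V(d))$. Now $\widetilde V$ is a general rank $n(k+1)$ kernel bundle, hence has natural cohomology by Corollary \ref{cor-divisiblebyn}; as in the proof of Lemma \ref{lem-basicbound}, natural cohomology forces $H^0(\widetilde V(d))=0$ whenever $\chi(\widetilde V(d))\le 0$, that is, whenever $d\le \tfrac{tn}{n(k+1)}-1=\tfrac{t}{k+1}-1$. Therefore $H^0(V(d))=0$ in that range.

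For the vanishing of $H^1$: dually, I would let $\widetilde M\colon \OO_{\PP^n}^{nk+t}\to \OO_{\PP^n}(1)^t$ be the submatrix of the given $M$ consisting of its first $nk+t$ columns, along $\OO_{\PP^n}^{nk+t}\hookrightarrow \OO_{\PP^n}^{r+t}=\OO_{\PP^n}^{nk+t}\oplus\OO_{\PP^n}^{\delta}$. Since $nk\ge n$, this $\widetilde M$ is surjective, and the inclusion $\ker\widetilde M\hookrightarrow \ker M$ followed by the second projection gives $0\to \widetilde V\to V\to \OO_{\PP^n}^{\delta}\to 0$ with $\widetilde V:=\ker\widetilde M$ a general rank $nk$ kernel bundle. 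Twisting by $\OO_{\PP^n}(d)$ and using $H^1(\OO_{\PP^n}(d)^{\delta})=0$, the long exact sequence exhibits $H^1(V(d))$ as a quotient of $H^1(\widetilde V(d))$. By Corollary \ref{cor-divisiblebyn} and the argument of Lemma \ref{lem-basicbound}, $H^1(\widetilde V(d))=0$ whenever $\chi(\widetilde V(d))\ge 0$, that is, whenever $d\ge \tfrac{tn}{nk}-1=\tfrac{t}{k}-1$; hence so is $H^1(V(d))$.

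The substance of the argument is entirely in constructing these two short exact sequences, each realizing $V$ as a subbundle (respectively an extension) of a rank-divisible-by-$n$ kernel bundle that differs from $V$ only by a trivial summand; after that one only quotes Corollary \ref{cor-divisiblebyn} and Lemma \ref{lem-basicbound}. I do not expect a genuine obstacle; the points needing a little care are checking that $V=\ker M$ really is the generic kernel bundle of its rank when $M$ is a coordinate restriction of a general larger map, and matching the floor/ceiling ranges output by Lemma \ref{lem-basicbound} for $\widetilde V$ with the stated bounds for integer $d$.
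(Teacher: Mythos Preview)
Your approach is correct and essentially identical to the paper's: both arguments sandwich $V$ between general kernel bundles of ranks $n(k+1)$ and $nk$ (with the same $t$) via the short exact sequences $0\to V\to \widetilde V\to \OO_{\PP^n}^{n-\delta}\to 0$ and $0\to \widetilde V\to V\to \OO_{\PP^n}^{\delta}\to 0$, then quote Corollary~\ref{cor-divisiblebyn}. Your write-up is in fact slightly more careful than the paper's in justifying that the restricted map is again general and in matching the thresholds with Lemma~\ref{lem-basicbound}.
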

\begin{proof}
Let $V'$ be a general bundle with presentation
\[
0\ra V'\ra \OO_{\PP^n}^{(k+1)n+t}\xrightarrow{M} \OO_{\PP^n}(1)^t\ra 0.
\]
By Corollary \ref{cor-divisiblebyn},  if we have $d\leq \frac{t-k-1}{k+1}$,  then $H^0(V'(d))=0$.  Let $V$ be a subbundle of $V'$ given by restricting $M$ to a general $\OO_{\PP^n}^{r+t}\subset \OO(1)_{\PP^n}^{(k+1)n+t}$.  Then we have an exact sequence
\[
0\ra V\ra V'\ra \OO_{\PP^n}^{(k+1)n-r}\ra 0
\]
and $H^0(V(d))=0$ for $d\leq \frac{t-k-1}{k+1}$.

Likewise,  let $V''$ be a general bundle with presentation 
\[
0\ra V''\ra \OO_{\PP^n}^{kn+t}\ra \OO_{\PP^n}(1)^{t}\ra 0
\]
and let $V$ be an extension of $\OO^{r-kn}$ by $V''$, so we have a sequence
\[
0\ra V''\ra V\ra \OO_{\PP^n}^{r-kn}\ra 0.
\]
We have $H^1(V''(d))=0$ if $d\geq \frac{t}{k}-1$ by Corollary \ref{cor-divisiblebyn}, so $H^1(V(d))=0$ as well.
\end{proof}

\section{Stability of Steiner bundles on \texorpdfstring{$\PP^n$}{P3}}\label{sec-stability}
In this section, we classify slope stable Steiner bundles on $\PP^n$. Previously,  building on work of Brambilla \cite{Bra05, Bra08}, Huizenga classified slopes for which there exists semistable Steiner bundles \cite[Theorem 1.4]{Hui13}.
Set $$\rho_n(x)= \frac{1}{n-1+ \frac{1}{1+x}} \quad \mbox{and} \quad  \phi_n = \lim_{i \to \infty} \rho_n^i(0)= \frac{-n+1+ \sqrt{n^2+2n-3}}{2(n-1)},$$ where $\rho_n^i (x) = \rho_n(x) \circ \rho_n^{i-1}(x)$ and $\rho_n^0(x)= x$.
Let $$\Phi_n := \{\alpha : \alpha > \phi_n\} \cup \{\rho_n^i(0): i \geq 0\} \subset \QQ.$$
Huizenga proves that there exists a semistable Steiner bundle with slope $\mu$ if and only if $\mu \in \Phi_n$. Furthermore, the semistable bundles with $\mu < \phi_n$ are the semi-exceptional Steiner bundles and they are stable if and only if the rank and the degree are relatively prime. Huizenga shows that given the Chern character of a Steiner bundle with slope in $\Phi_n$, there exists a sufficiently large multiple of the Chern character which contains slope semistable bundles.
In addition,  Bohnhorst and Spindler \cite{BS92} give a precise criterion under which a rank $n$ cokernel of a map of direct sums  of line bundles is stable.

Set $$\psi_n = \frac{n-1 + \sqrt{n^2+2n-3}}{2}.$$ Note that $\psi_n = \frac{1}{\phi_n}.$
The main theorem of this section is the following.

\begin{theorem}\label{400}
Let $n\geq 2$, $r\geq n$, $t> 0$ be integers. Let $M$ be a general matrix of linear forms and let $V$ be the vector bundle on $\PP^n$ be defined by the sequence
\begin{equation}\label{402}
0\ra \OO_{\PP^n}(-1)^t\xrightarrow{M} \OO_{\PP^n}^{r+t}\ra V\ra 0.
\end{equation}
If $\frac{n}{t} \leq \frac{r}{t} < \psi_n$, then $V$ is slope stable.
\end{theorem}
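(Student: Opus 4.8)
The plan is to argue by contradiction, extracting from a destabilizing subsheaf a numerical constraint that violates $r/t<\psi_n$. Suppose $V$ is not slope stable. Then there is a saturated subsheaf $F\subsetneq V$ with $\mu(F)\ge\mu(V)=t/r$, and the quotient $Q:=V/F$ is a torsion-free sheaf of rank $r''$ with $0<r''<r$ and $d'':=c_1(Q)$ satisfying $d''r\le r''t$. Since \eqref{402} realizes $V$ as a quotient of $\OO_{\PP^n}^{r+t}$, the sheaf $Q$ is globally generated, and in particular $d''\ge 0$. Dispose of the case $d''=0$ first: then $Q$ is the free sheaf $\OO_{\PP^n}^{r''}$, so $V$ has a trivial quotient, equivalently there is a nonzero covector $v\in(\CC^{r+t})^{\vee}$ with $vM=0$ as a row of $t$ linear forms --- that is, $t(n+1)$ linear conditions on $v$. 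As $\sqrt{n^2+2n-3}<n+1$ gives $\psi_n<n$, the hypothesis forces $r+t<t(n+1)$, so a general $M$ admits no such $v$; hence from now on $d''\ge 1$.

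For $d''\ge 1$ the idea is a dimension count on the incidence variety $\Gamma$ of pairs $(M,[\varphi])$, where $\varphi$ is a surjection from $V=\coker(M)$ onto a torsion-free sheaf with the numerical invariants $(r'',d'')$ of $Q$, organized via a relative Quot scheme over the affine space $\Hom(\OO_{\PP^n}(-1)^t,\OO_{\PP^n}^{r+t})$ of Steiner data. Stratifying $\Gamma$ by the dimensions of $H^0(Q)$ and of the pertinent multiplication images --- in the spirit of the stratification used to prove Theorem \ref{mainCohomology} --- one bounds the fibres of the projection $\Gamma\to\{M\}$ and shows it is not dominant. Equivalently, dualizing $\varphi$, one must bound the family of rank-$r''$ subsheaves of $\ker(\OO_{\PP^n}^{r+t}\to\OO_{\PP^n}(1)^t)$, the dual of $M$, having first Chern class $-d''$. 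Since $Q$ is globally generated with $\mu(Q)\le t/r$, only finitely many numerical types $(r'',d'')$ can arise, and for each the inequality needed to keep $\Gamma\to\{M\}$ non-dominant is, after simplification, controlled by the defining relation $\psi_n^2=(n-1)(\psi_n+1)$ of $\psi_n$, so that it holds exactly in the range $r/t<\psi_n$. The sharpest input is an upper bound for $h^0$ of a globally generated torsion-free quotient of $V$ of given rank and degree; I would derive it from the Macaulay-type estimate of Lemma \ref{lemma} together with the Brambilla--Huizenga classification of exceptional Steiner bundles, the borderline types being precisely the semi-exceptional Steiner quotients that show $\psi_n$ is sharp. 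Note that the subcase of equality $d''r=r''t$ is included here, which is what upgrades slope semistability to slope stability when $\gcd(r,t)>1$.

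The main obstacle is this last dimension estimate. One must (i) determine exactly which numerical types $(r'',d'')$ occur for a globally generated torsion-free quotient of a Steiner bundle, (ii) bound $h^0(Q)$ --- and hence the fibre dimensions --- sharply enough that the numerical transition happens precisely at $r/t=\psi_n$ and not at some weaker threshold, and (iii) make the estimate uniform over the whole range $n\le r<t\psi_n$, rather than only the asymptotic regime or the coprime regime already accessible from earlier work of Brambilla and Huizenga. If a fully self-contained count proves too delicate, an alternative is to first invoke \cite{Hui13} to reduce to slope semistability of the general Steiner bundle with these invariants and then run the above incidence-variety argument only on the closed locus of strictly semistable $V$, for which the relevant quotients $Q$ additionally have slope exactly $t/r$ and are themselves semistable.
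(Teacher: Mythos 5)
Your proposal is a genuinely different strategy from the paper's, but as written it has a real gap: the entire content of the theorem is concentrated in the dimension estimate that you defer, and nothing in the proposal actually produces it. The preliminary reductions are fine --- the quotient $Q=V/F$ of a saturated destabilizer is globally generated with $0\le d''r\le r''t$, the case $d''=0$ is correctly disposed of by the linear-algebra count using $\psi_n<n$, and there are indeed only finitely many types $(r'',d'')$. But the assertion that a stratification ``in the spirit of Theorem \ref{mainCohomology}'' bounds the relative Quot scheme of quotients of type $(r'',d'')$ so that the incidence variety fails to dominate \emph{precisely} when $r/t<\psi_n$ is exactly the hard step, and it is not a routine extension of that theorem: the stratification there controls single vectors in $H^0(V_1)$, not families of rank-$r''$ quotient sheaves, and a naive parameter count of such quotients does not see the Kronecker-module stability that is the actual source of the threshold $\psi_n$ (note $\psi_n^2=(n-1)(\psi_n+1)$ is the right quadratic, but identifying the correct quadratic is not the same as deriving it from a fibre-dimension bound). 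Your fallback of invoking \cite{Hui13} also does not close the gap: that paper produces semistable Steiner bundles only for sufficiently divisible multiples of the Chern character, not for the general bundle with the given $(r,t)$, so it cannot be used to reduce to the strictly semistable locus. Items (i)--(iii) in your final paragraph are not loose ends; they are the proof.

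For comparison, the paper avoids any Quot-scheme count by restricting to surfaces where the full classification of stable Chern characters is already known: for $\frac{n+1}{2}<\frac{r}{t}<\psi_n$ it restricts $V$ to a degree-$n$ rational normal scroll $S$ (a Hirzebruch surface), observes that a destabilizer of $V$ would destabilize $V|_S$ for every polarization because all the relevant $c_1$'s are multiples of the hyperplane class, and then quotes \cite[Theorem 10.9]{CH21} for stability of the restricted Steiner bundle on $S$; the range $2<\frac{r}{t}<\psi_n$ is handled by first restricting to a linear $\PP^m$ with $\frac{m+1}{2}<\frac{r}{t}<\psi_m$, and the range $\frac{r}{t}\le 2$ by restricting to a $\PP^3$ and then to a smooth quadric, where Abe's Dr\'ezet--Le Potier-type classification applies. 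If you want to salvage your approach, you would essentially have to reprove the inequality $\chi(V',V)\le 0$ for the relevant subobjects $V'$ (cf.\ the remark following Theorem \ref{400} and \cite[Section 5]{Dre87}), which amounts to redoing the Kronecker-stability analysis that the surface results already encapsulate.
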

\begin{remark}
The bound $\frac{r}{t}<\psi_n$ is equivalent to requiring that $V$ is not semi-exceptional and corresponds to the requirement that the Kronecker module associated to $V$ is stable. If $\frac{r}{t}> \psi_n$, then it is always possible to find integers $r'\leq r,t'$ with $\frac{r'}{t'}\leq \frac{r}{t}$ such that if $V'$ is defined by the sequence
\[
0\ra \OO_{\PP^n}(-1)^{t'}\xrightarrow{M} \OO_{\PP^n}^{r'+t'}\ra V'\ra 0,
\]
then $\chi(V',V)>0$. The only stable $V$ that have such a $V'$ are the exceptional bundles. This is discussed more extensively in \cite[Section 5]{Dre87}. Hence, Theorem \ref{400} combined with the classification of semi-exceptional Steiner bundles gives a complete description of slope stable Steiner bundles.
\end{remark}

Since the pullback of a slope semistable bundle under a finite morphism is slope semistable and semistability is open in families, we obtain the following immediate corollary of Theorem \ref{400}.

\begin{corollary}\label{cor-semistabilityd}
Let $n\geq 2$, $r\geq n$, $t> 0$ be integers. Let $M$ be a general matrix of forms of degree $d$ and let $V$ be the vector bundle on $\PP^n$ be defined by the sequence
\begin{equation}\label{886}
0\ra \OO_{\PP^n}(-d)^t\xrightarrow{M} \OO_{\PP^n}^{r+t}\ra V\ra 0.
\end{equation}
If $\frac{n}{t} \leq \frac{r}{t} < \psi_n$, then $V$ is slope semistable.
\end{corollary}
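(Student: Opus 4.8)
The plan is to exhibit the general bundle $V$ of \eqref{886} as a flat deformation of the pullback, under a finite morphism $\PP^n\to\PP^n$, of a general \emph{linear} Steiner bundle to which Theorem \ref{400} already applies, and then invoke openness of semistability in a family.

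Concretely, let $f\colon\PP^n\to\PP^n$ be the finite morphism $[x_0:\cdots:x_n]\mapsto[x_0^d:\cdots:x_n^d]$, which has degree $d^n$, satisfies $f^*\OO_{\PP^n}(1)\cong\OO_{\PP^n}(d)$, and is flat (a finite surjective morphism of smooth varieties). Choose a general matrix of linear forms $M'$ and let $V'$ be the bundle on the target defined by $0\to\OO_{\PP^n}(-1)^t\xrightarrow{M'}\OO_{\PP^n}^{r+t}\to V'\to0$. The hypothesis $\frac{n}{t}\le\frac{r}{t}<\psi_n$ is exactly that of Theorem \ref{400}, so $V'$ is slope stable, in particular slope semistable. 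Pulling the defining sequence back along the flat map $f$ gives an exact sequence
\[
0\to\OO_{\PP^n}(-d)^t\xrightarrow{f^*M'}\OO_{\PP^n}^{r+t}\to f^*V'\to 0,
\]
so $f^*V'$ is locally free and is one member of the family \eqref{886}, cut out by the (special) matrix of degree-$d$ forms $f^*M'$. Since pullback under a finite morphism preserves slope semistability (over $\CC$, e.g.\ via the trace splitting of $f_*\OO_{\PP^n}$), $f^*V'$ is slope semistable with respect to $f^*\OO_{\PP^n}(1)=\OO_{\PP^n}(d)$; and because $\Pic(\PP^n)\cong\ZZ$, slope semistability is independent of the ample polarization, so $f^*V'$ is slope semistable in the usual sense. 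That $f^*V'$ need not be slope \emph{stable} is harmless, as we only assert semistability.

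Now run the standard specialization argument over the irreducible affine space $\A$ of $(r+t)\times t$ matrices with entries in $H^0(\OO_{\PP^n}(d))$. The locus $U\subset\A$ where the associated sheaf map is fiberwise injective with locally free cokernel is open (the degeneracy locus in $\PP^n\times\A$ is closed and proper over $\A$) and nonempty, since it contains $f^*M'$; one checks in passing that $r\ge n$ forces the rank-drop locus to have codimension $r+1\ge n+1$, so membership in $U$ in fact holds for general $M$. Over $U$ there is a flat family of vector bundles, and the sublocus of $U$ over which the bundle is slope semistable is again open and contains the point $f^*M'$, hence is a nonempty open subset of the irreducible space $\A$, and therefore dense. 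Thus the general $M$ in \eqref{886} defines a slope semistable vector bundle $V$, as claimed.

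The only non-formal ingredient is the quoted fact that finite pullback preserves slope semistability; granting that, the rest is a direct combination of Theorem \ref{400} with openness of semistability in flat families and irreducibility of the parameter space, so I expect no real obstacle. The point of this deduction is precisely that it lets us avoid rerunning the degree-$d$ analysis of \S\ref{sec-stability} from scratch.
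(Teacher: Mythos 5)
Your proposal is correct and takes essentially the same route as the paper: the authors derive this corollary in one line from Theorem \ref{400} by pulling back under a finite morphism (which preserves slope semistability) and invoking openness of semistability in families, which is exactly your argument with the details of the degree-$d$ power map and the specialization over the irreducible parameter space spelled out.
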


\begin{proof}[Proof of Theorem \ref{400}]

For $n=2$, the theorem follows from the work of Dr\'ezet and Le Potier \cite{DLP85}.

Now assuming $n\geq 3$, we will prove the theorem by restricting the bundle $V$ to a rational normal surface scroll embedded in $\PP^n$. For $e\in \{0,1\}$, let $\mathbb{F}_e$ denote the Hirzebruch surface $\PP(\OO_{\PP^1} \oplus \OO_{\PP^1}(e))$, let $F$ be the class of a fiber of the map $F_e\ra \PP^1$, and let $E$ be the class of a section with $E^2=-e$.
Recall that $\FF_0 \cong \PP^1 \times \PP^1$ and $\FF_1$ is the blowup of $\PP^2$ at a point.  The Picard group of $\FF_e$ is generated by the classes of  $E$ and  $F$. For $m>e$, the linear system $|E + mF|$  embeds $\FF_e$  into $\PP^{2m-e+1}$ as a rational normal scroll of degree $2m-e$.

We first prove the theorem under the additional assumption $\frac{n+1}{2}<\frac{r}{t}$. Let $S\subset \PP^n$ be a general rational normal surface scroll of degree $n$. When $n=2k+1$ is odd, $S$ is the image of $\PP^1 \times \PP^1$ under the linear system $|E+kF|$. When $n=2k$ is even, $S$ is the image of $\FF_1$ under the linear system $|E+kF|$.
Let $E$ be a general sheaf on $\PP^n$ as in  (\ref{402}).

Suppose there is a subsheaf $F\ra E$ with $\mathrm{rk}(F)< \mathrm{rk}(E)$ and $\mu(F)\geq \mu(E)$.
Since  $S$  is general, we may assume that the map $F\vert_S\ra V\vert_S$ is also an injection and $F\vert_S$ is torsion-free. Since $c_1(F\vert_S)$ and $c_1(V\vert_S)$ are both multiples of the hyperplane class on $S$, we have that $\mu_H(F\vert_S)\geq \mu_H(V\vert_S)$ for \emph{any} ample $H$ on $S$.
Restricting the sequence (\ref{402}) to $S$, we obtain a resolution of $V\vert_S$
$$0\to \OO_S(-1)^t \to \OO_S^{r+t} \to V\vert_S \to 0.$$ The stability of such bundles on Hirzebruch surfaces has been studied in \cite{CH21}. By  \cite[Theorem 10.9]{CH21}, if 
$\frac{n+1}{2}<\frac{r}{t}< \psi_n$, 
then $V\vert_S$ is slope stable with respect to ample divisors contained in a nonempty open subset of the ample cone of $S$. This contradicts the assumption that $F\ra E$ slope destabilizes $E$, showing that in this range $V$ is slope stable on $\PP^n$. 

Observe that when $m \geq 3$, $\frac{m+2}{2} < \psi_m$. Consequently, if $2 < \frac{r}{t} < \psi_n$, then there exists an integer $m \geq 3$ such that $\frac{m+1}{2} < \frac{r}{t} < \psi_m.$ The restriction of $V$ to a general linear $\PP^m$ is slope stable. It follows that $V$ is slope stable on $\PP^n$. This proves Theorem \ref{400} under the  assumption that $2 < \frac{r}{t} < \psi_n$. 

Finally, if $\frac{r}{t}\leq 2$ then we restrict to a general linear $\P^3$, and Theorem \ref{stable3Space} below shows that $V$ is slope stable.
\end{proof}
\begin{theorem}\label{stable3Space}
Let $r\geq 3,t> 0$ be integers and let the vector bundle $V$ on $\PP^3$ be defined by a sequence
\[
0\ra \OO_{\PP^3}(-1)^t\xrightarrow{M} \OO_{\PP^3}^{r+t}\ra V\ra 0,
\]
where $M$ is general.  If $r< (1+\sqrt{3}) t$, then $V$ is slope stable.
\end{theorem}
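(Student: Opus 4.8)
The plan is to reduce the statement to the slope stability of the restriction of $V$ to a general smooth quadric surface $Q\subset\PP^3$, which by the Hirzebruch-surface results of \cite{CH21} is controlled by a Kronecker module whose stability range is exactly $r/t<1+\sqrt3$. Suppose $V$ is not slope stable. Then there is a saturated subsheaf $F\subset V$ with $1\le p:=\rk F\le r-1$ and $\mu(F)\ge\mu(V)=t/r$, with $V/F$ torsion-free. Since $\Pic\PP^3=\ZZ H$, we have $c_1(F)=dH$ and $c_1(V)=tH$ for integers satisfying $dr\ge pt$; in particular the slope inequality $\mu_A(F)\ge\mu_A(V)$ holds for \emph{every} ample class $A$, because it is equivalent to $d/p\ge t/r$.

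First I would restrict to a general smooth quadric $Q\cong\PP^1\times\PP^1$. For general $Q$ the quotient $V/F$ remains torsion-free, so $0\to F|_Q\to V|_Q\to(V/F)|_Q\to0$ is exact; $V|_Q$ is locally free of rank $r$, since the rank-drop locus of a general $t\times(r+t)$ matrix of $(1,1)$-forms on $Q$ has expected codimension $r+1>2$; and since the restriction map $H^0(\OO_{\PP^3}(1))\xrightarrow{\ \sim\ }H^0(\OO_Q(1,1))$ is an isomorphism, the restricted matrix $M|_Q$ is a general matrix of $(1,1)$-forms, so $V|_Q$ is the general bundle fitting into
\[
0\to\OO_Q(-1,-1)^t\xrightarrow{\,M|_Q\,}\OO_Q^{r+t}\to V|_Q\to0.
\]
Because $c_1(F|_Q)=dH_Q$ and $c_1(V|_Q)=tH_Q$ are proportional and $H_Q\cdot A>0$ for every ample $A$ on $Q$, the subsheaf $F|_Q$ slope-destabilizes $V|_Q$ with respect to \emph{every} polarization of $Q$.

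To get a contradiction it is enough to find one polarization of $Q$ for which $V|_Q$ is slope stable. Dualizing the displayed sequence, $(V|_Q)^{*}$ is presented by a general $t\times(r+t)$ matrix of sections of $\OO_Q(1,1)$, and since $h^0(\OO_Q(1,1))=4$ this is precisely a general representation of the Kronecker quiver $K_4$ with dimension vector $(r+t,t)$. Such a representation is stable exactly when the Tits form is negative, $(r+t)^2+t^2-4(r+t)t<0$; writing $\lambda=r/t$ this rearranges to $\lambda^2-2\lambda-2<0$, i.e. $\lambda<1+\sqrt3=\psi_3$, which is the hypothesis. By \cite[Theorem 10.9]{CH21}, stability of this Kronecker module together with the generality of $M$ implies that $V|_Q$ is slope stable with respect to the ample classes in a nonempty open subcone of the ample cone of $Q$, contradicting the previous paragraph. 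Hence $V$ is slope stable.

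The step I expect to be the main obstacle is checking that \cite[Theorem 10.9]{CH21} genuinely promotes Kronecker stability to sheaf slope stability on $\PP^1\times\PP^1$ over the \emph{whole} range $\lambda<\psi_3$, with none of the extra numerical loss that in the proof of Theorem \ref{400} forced the scroll argument to assume $\lambda>\frac{n+1}{2}$. Should a small-$\lambda$ gap survive, it is closed by instead restricting $V$ to a general plane $\PP^2\subset\PP^3$ and applying Theorem \ref{400} for $n=2$ (valid for $\lambda<\psi_2<\psi_3$), leaving only an intermediate range for the quadric. One also has to invoke the standard fact that a saturated subsheaf restricts to a subsheaf on a general member of a sufficiently positive linear system, so that the sequence $0\to F|_Q\to V|_Q\to(V/F)|_Q\to0$ really is exact and $(V/F)|_Q$ is torsion-free.
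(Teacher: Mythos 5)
Your overall framework---restrict a putative destabilizing subsheaf to a general smooth quadric $Q\subset\PP^3$, note that proportionality of first Chern classes makes it destabilize $V|_Q$ for \emph{every} polarization, and then exhibit one polarization for which $V|_Q$ is stable---is exactly the paper's reduction. The gap is in the step you yourself flag as the main obstacle, and your contingency plan does not close it. The paper only invokes \cite[Theorem 10.9]{CH21} under the hypothesis $\frac{n+1}{2}<\frac{r}{t}<\psi_n$; for $n=3$ this is $2<r/t<\psi_3$, and the entire reason Theorem \ref{stable3Space} exists as a separate statement is that the Kronecker-module mechanism of \cite{CH21} is not available (at least not off the shelf) for $r/t\leq 2$. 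Your proposed fallback of restricting to a plane and using Theorem \ref{400} for $n=2$ only covers $r/t<\psi_2=\frac{1+\sqrt 5}{2}\approx 1.618$, so the range $\psi_2\leq r/t\leq 2$ remains uncovered: neither the plane restriction nor the stated range of \cite[Theorem 10.9]{CH21} applies there. The identification of $(V|_Q)^*$ with a general representation of $K_4$ and the computation that the Tits form is negative precisely when $r/t<1+\sqrt 3$ are correct, but Kronecker stability of the module is not, by the results you cite, known to imply slope stability of the sheaf in this range.

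The paper closes this range by an entirely different mechanism. It shows that $W=V|_Q$ is $L$-prioritary (i.e.\ $\Ext^2(W,W(-L))=0$ for a line class $L$ on $Q$), hence a general point of the irreducible stack of prioritary sheaves with character $\ch(W)=(r,tH,-t)$; since the slope-stable locus is open and dense in that stack whenever it is nonempty, the problem reduces to the \emph{existence} of a slope-stable sheaf with that character. Existence is then read off from Abe's classification \cite[Theorem 3.7]{Abe} of stable characters with $c_1$ proportional to $H$ on a quadric: one computes $\mu=t/r$ and $\Delta=\mu^2+\mu$ and checks $\Delta\geq\delta(\mu)$ exactly when $\mu>1/(1+\sqrt 3)$, i.e.\ $r/t<1+\sqrt 3$. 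If you want to salvage your argument you either need to verify that \cite[Theorem 10.9]{CH21} really does apply to the presentation $0\to\OO_Q(-1,-1)^t\to\OO_Q^{r+t}\to V|_Q\to 0$ for all $r/t<1+\sqrt 3$ (which the authors evidently did not find to be the case), or substitute an existence result for stable sheaves on $Q$ such as Abe's, as the paper does.
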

\begin{proof}
Let $Q$ be a smooth quadric surface with hyperplane class $H$, and let $W$ be a vector bundle defined by the sequence of sheaves on $Q$,
\begin{equation}\label{eq-51}
0\ra \OO_Q(-H)^t\xrightarrow{M} \OO_Q^{r+t}\ra W\ra 0,
\end{equation}
where $M$ is general.  We claim that $W$ is slope-stable with respect to $H$. Observe that $W$ is locally free. Let $L$ be a line on $Q$. We first show that $W$ is $L$-prioritary, that is $\Ext^2(W, W(-L))=0$.  Tensor the defining sequence (\ref{eq-51}) by $\OO_Q(-L)$ and apply $\Hom(W, -)$. The corresponding long exact sequence in cohomology, shows that $\Ext^2(W,W(-L))$ is a quotient of $$\Ext^2(W, \OO_Q(-L)) \cong H^2(W^*(-L))^{r+t}.$$  Dualize the defining sequence (\ref{eq-51}). Since  $H^1(\OO_Q(H-L))=H^2(\OO_Q(-L))=0$, we conclude that $H^2(W^*(-L))=0$.  In fact, $W$ is a general prioritary sheaf with Chern character $\ch(W)$ \cite{CH18, CH21, Wal98}.  The stack of prioritary sheaves is irreducible, and the locus of slope-stable sheaves is a dense open substack if it is nonempty.  Thus we must show there are slope-stable sheaves of character $\bw = \ch W=(r,tH,-t)$.

By work of Abe \cite[Theorem 3.7]{Abe}, there is an explicit ``Dr\'ezet-Le Potier'' function $\delta:\QQ\to \QQ$ such that if an integral Chern character $\bw$ has total slope $\frac{c_1}{r} = \mu(\bw) H$ proportional to $H$, then there is a slope-stable sheaf of character $\bw$ provided $\Delta(\bw) \geq \delta(\mu(\bw))$.  We compute $$\mu(\bw)  = \frac{t}{r} \quad \textrm{and}\quad \Delta(\bw) = \frac{t^2}{r^2}+\frac{t}{r} = \mu(\bw)^2+\mu(\bw).$$ A straightforward computation shows that if $\mu(\bw) >1/(1+\sqrt{3})$, then $\Delta(\bw) \geq \delta(\mu(\bw))$.  More visually, in Figure \ref{fig1} we graph both the function $\Delta = \delta(\mu)$ and the curve $\Delta = \mu^2+\mu$ for $\mu > 1/(1+\sqrt{3})$, and observe that the latter curve  lies on or above the former.
\end{proof}

\begin{figure}[t] 
\begin{center}
\setlength{\unitlength}{1in}
\begin{picture}(5.012,3.196)
\put(0,0){\includegraphics[scale=.4,bb=0 0 12.53in 7.99in]{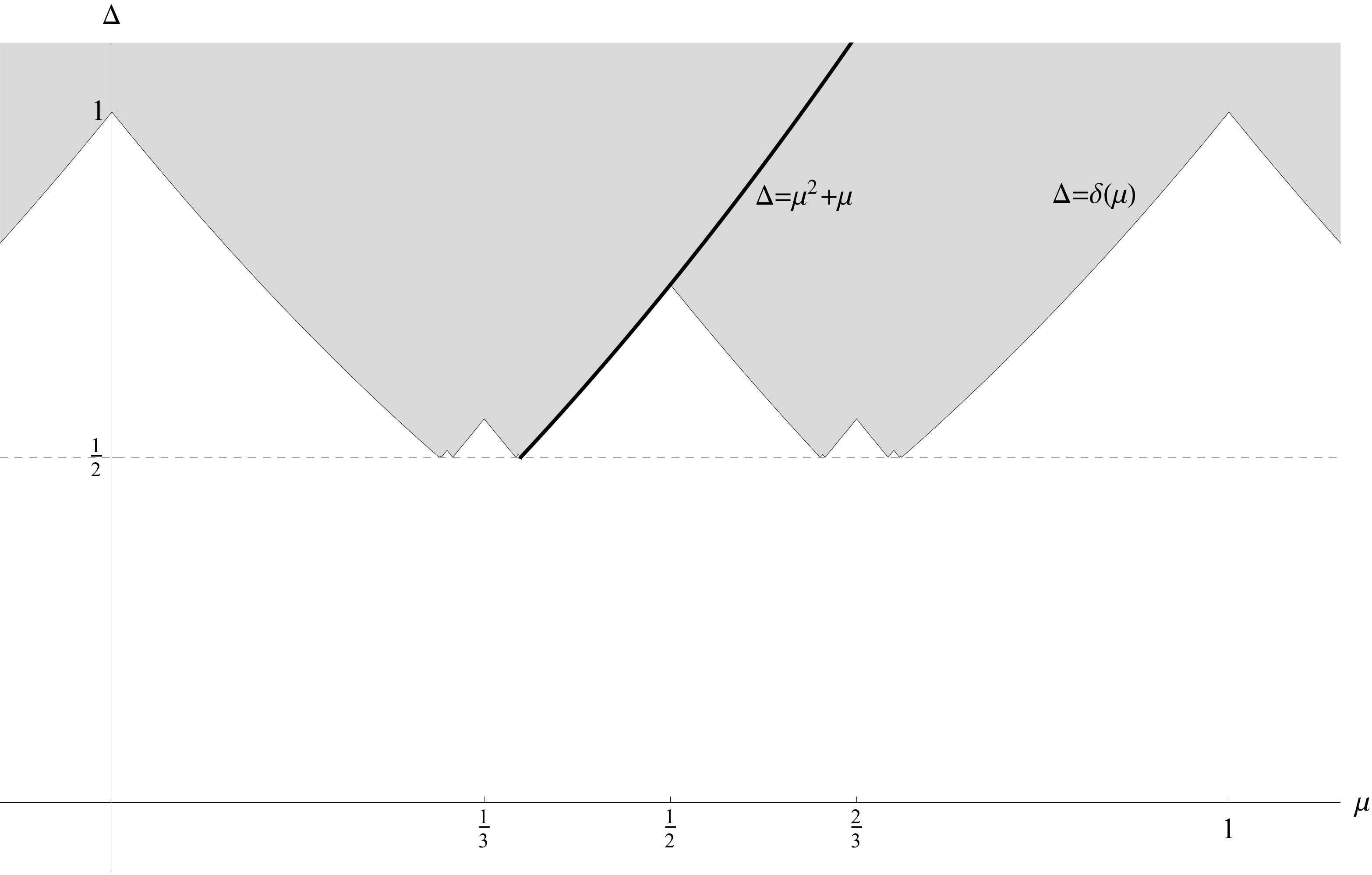}}
\end{picture}
\end{center}
\caption{The curve $\Delta = \delta(\mu)$ appears in Abe's classification of slope stable bundles with symmetric $c_1$ on a smooth quadric surface; slope stable bundles of slope $\mu$ and discriminant $\Delta$ exist if $(\mu,\Delta)$ lies on or above the curve, in the shaded region.  The slope and discriminant of the bundle $W$ in the proof of Theorem \ref{stable3Space} lie on the curve $\Delta = \mu^2+\mu$.  For $\mu > 1/(1+\sqrt{3}) \approx 0.36$, this curve lies on or above $\Delta = \delta(\mu)$.}\label{fig1}
\end{figure}
\begin{remark}
On $\PP^2$, exceptional bundles control the classification of stable bundles. Let $E$ be an exceptional bundle and let $V$ be a stable bundle with $\mu(E)-3< \mu(V) < \mu(E)$, then $\chi(E, V) \leq 0$. In fact, these types of inequalities determine the Chern characters of non-exceptional stable bundles on $\PP^2$ \cite{DLP85}. On $\PP^n$ consider  stable bundles of the form 
$$0 \to \OO_{\PP^n}(-n-1)^t \to \OO_{\PP^n}(-n)^{t+r} \to V \to 0.$$ By Theorem \ref{400}, by making the ratio $\frac{t}{r}$ less than $n$ but arbitrarily close to $n$, we can find stable bundles $V$ with slope less than but arbitrarily close to $0$ such that  $\chi(\OO, V) =  (-1)^{n-1}t$. In particular, if $n$ is odd, $\chi(\OO,V)>0$.
\end{remark}

\section{Ampleness of Steiner bundles}\label{sec-ample}
In this section, we give a criterion for  ampleness of  Steiner bundles.  

\begin{proposition}\label{prop-ample}
Let $t > 0$ and $r\geq n$ be positive integers. Let $V$ be a vector bundle on $\PP^n$ given by the presentation
\[
0\ra \OO_{\PP^n}(-1)^t\xrightarrow{M} \OO_{\PP^n}^{r+t}\ra V\ra 0.
\]
There is a closed subset $S\subset \Hom(\OO_{\PP^n}(-1)^t,\OO_{\PP^n}^r)$ of codimension at least $t-r-2n+3$ such that if $M\notin S$, the bundle $V$ is ample. In particular, if $t-r > 2n-3$, then the general $V$ is ample.
\end{proposition}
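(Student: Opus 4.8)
The plan is to reduce ampleness of $V$ to a statement about the restriction of $V$ to every line $\ell \cong \PP^1 \subset \PP^n$, using the criterion that a vector bundle on $\PP^n$ is ample if and only if its restriction to every line is ample, i.e. splits as a sum of $\OO_{\PP^1}(a_i)$ with all $a_i > 0$. Restricting the defining sequence to a line $\ell$, the map $M|_\ell$ is a $t \times (t+r)$ matrix of linear forms on $\ell$, and $V|_\ell$ is its cokernel. The key point is to identify, inside $\Hom(\OO_{\PP^n}(-1)^t, \OO_{\PP^n}^{r+t})$, the ``bad locus'' $S$ of those $M$ for which there exists a line $\ell$ with $V|_\ell$ having a non-positive summand, and to bound its codimension. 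Since a summand $\OO_{\PP^1}(a)$ with $a \leq 0$ appearing in $V|_\ell$ forces a section of $V^*|_\ell(0) = V^*|_\ell$ in nonnegative degree — equivalently a nonzero map $\OO_\ell \to V^*|_\ell$, or dually a surjection $V|_\ell \twoheadrightarrow \OO_\ell$ — I would phrase the bad locus as: there is a line $\ell$ and a nonzero map $V|_\ell \to \OO_\ell$, i.e. a nonzero element of $H^0(\ell, V^*|_\ell)$.

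The main steps, in order, are as follows. First, dualize: $V^*$ sits in $0 \to V^* \to \OO^{r+t} \to \OO(1)^t \to 0$, so $H^0(\ell, V^*|_\ell)$ is the kernel of the evaluation-type map $\CC^{r+t} \to H^0(\ell, \OO(1)^t) = \CC^{2t}$ determined by $M|_\ell$. Thus $V|_\ell$ has a non-positive summand exactly when this $2t \times (r+t)$ matrix of linear coefficients has rank $< r+t$, i.e. drops rank, which is a closed condition cut out by vanishing of its maximal minors. Second, set up the incidence variety $Z \subset \Gr(2, n+1) \times \Hom(\OO(-1)^t, \OO^{r+t})$ consisting of pairs $(\ell, M)$ such that $V|_\ell$ has a non-positive summand; then $S = \pi_2(Z)$, and I would bound $\dim Z$ fiberwise over $\Gr(2,n+1)$ (which has dimension $2(n-1)$). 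Third, for a fixed line $\ell$, estimate the codimension inside $\Hom(\OO(-1)^t,\OO^{r+t})$ of the locus of $M$ such that the associated $2t \times (r+t)$ matrix over $\ell$ has non-maximal rank: generically a map $\CC^{r+t} \to \CC^{2t}$ with $r+t \leq 2t$ (using $r \leq t$, which holds in the relevant range $t - r > 2n - 3 \geq -1$... more precisely one restricts attention to where the bound is meaningful) is injective, and the determinantal locus where it fails to be injective has codimension $2t - (r+t) + 1 = t - r + 1$ in the space of such matrices; I would need to check the relevant matrices are sufficiently general as $M$ varies — i.e. that the map from $M$ to its coefficient matrix along $\ell$ is a submersion onto the relevant affine space — so that this determinantal codimension is inherited. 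Combining, $\codim_{} S \geq (t - r + 1) - 2(n-1) = t - r - 2n + 3$, and when this is positive the general $M$ avoids $S$, so the general $V$ is ample.

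The step I expect to be the main obstacle is the third one: controlling the codimension of the rank-drop locus uniformly in $\ell$ and transferring the generic determinantal codimension bound from the ``coefficient matrix'' affine space back to $\Hom(\OO(-1)^t, \OO^{r+t})$. The subtlety is that the matrices $M|_\ell$ are not arbitrary $2t \times (r+t)$ matrices — their entries come from the two linear forms cutting out a point of $\PP(H^0(\OO_\ell(1)))$, so one must verify that, as $M$ ranges over all global maps, the induced coefficient data is general enough that the expected codimension $t - r + 1$ is actually achieved rather than merely an upper bound on the true codimension. I would handle this by noting that for a fixed $\ell$ the restriction map on $\Hom$ spaces is surjective onto $\Hom(\OO_\ell(-1)^t, \OO_\ell^{r+t})$ with linear fibers, so the pullback of the determinantal locus has the same codimension there, and then a standard dimension count over the base $\Gr(2,n+1)$ finishes the bound. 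A secondary, more bookkeeping issue is verifying that when $V|_\ell$ splits with all summands positive for every $\ell$, this indeed implies $V$ is ample — this is a classical fact (a bundle on projective space is ample iff it is ample on every line), which I would cite.
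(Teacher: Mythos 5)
Your dimension count is sound and is in fact the same computation the paper makes: for a fixed line $\ell$ the locus of $M$ for which the induced $2t\times(r+t)$ matrix drops rank has codimension at least $t-r+1$ (the paper gets this by an incidence count over $\PP^{r+t-1}$, you by the generic determinantal formula; both transfer to $\Hom_{\PP^n}$ because restriction of $\Hom$ spaces to $\ell$ is surjective with linear fibers), and subtracting $\dim\mathbb{G}(1,n)=2n-2$ gives $t-r-2n+3$. The genuine gap is the step you set aside as ``a secondary, more bookkeeping issue'': the assertion that a vector bundle on $\PP^n$ is ample if and only if its restriction to every line is ample is \emph{not} a citable classical fact, and the ``if'' direction fails in general --- Gieseker constructed a bundle on $\PP^2$ (in positive characteristic) whose restriction to every curve, in particular to every line, is ample but which is not ample. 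What you can cite is Gieseker's Lemma \cite[Proposition 6.1.7]{Laz04}: a \emph{globally generated} bundle fails to be ample if and only if its restriction to some irreducible curve $C$ admits a trivial quotient. That gives you a curve, not a line, and for a general bundle there is no way to pass from a bad curve to a bad line.

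The paper closes exactly this gap using the linear structure of the Steiner presentation. A trivial quotient of $V|_C$ corresponds to a nonzero constant vector $v\in\CC^{r+t}=H^0(\OO_C^{r+t})$ in the kernel of $M^*|_C$ on global sections; the entries of $M^*v$ are linear forms on $\PP^n$ vanishing on $C$, hence they vanish on the linear span of $C$, hence on some line $L$ contained in that span, so $H^0(V^*|_L)\neq 0$ and $V|_L$ is already non-ample. This is the one place the argument genuinely uses that $M$ is a matrix of \emph{linear} forms, and it must be proved rather than cited. With that reduction supplied, the remainder of your outline goes through essentially as written.
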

\begin{proof}
The bundle $V$ is globally generated since it is a quotient of $\OO^{t+r}$. Hence, by Gieseker's Lemma \cite[Proposition 6.1.7]{Laz04}, if $V$ is not ample, there is a curve $i:C\ra \PP^n$ such that $i^*V$ has a trivial quotient.  Hence, if $V$ is not ample, there is some $i:C\ra\PP^n$ such that $H^0(i^*(V^*))\neq 0$. Since $V$ is locally free, we have an exact sequence
\[
0\ra i^* (V^*)\ra \OO_C^{r+t}\xrightarrow{M^*} \OO_C(1)^{t}\ra 0.
\]
In particular,  $M^*:H^0(\OO^{r+t}_C)\ra H^0(\OO_C(1)^t)$ has nonzero kernel on $C$.  After a change of basis, we may assume that the vector $(1, 0, \dots, 0)$ is in the kernel. It follows that the first column of the matrix $M^*$ restricted to $C$ is identically zero.  Since $M^*$ is a matrix of linear forms on $\PP^n$, the first column of $M^*$ must be zero when restricted to the linear span of $C$. Hence, there exists a line $L$ such that $V^*$ restricted to $L$ has a section. We conclude that $V$ is ample if and only if all of its restrictions to lines in $\PP^n$ are ample.

Given a line $\ell\subset \PP^n$ and a nonzero section $s\in H^0(\ell, \OO_{\ell}^{r+t})$ the codimension in $\Hom(\OO_\ell^{r+t}, \OO_\ell(1)^t)$ of maps $M$ with $Ms=0$ is 2t. As a result, the set of $M:\OO_\ell^{r+t}\ra \OO_{\ell}(1)^t$ such that $M:H^0(\OO_\ell^{r+t})\ra H^0(\OO_\ell(1)^t)$ has nonzero kernel has codimension at least $t-r+1$ in $\Hom(\OO_\ell^{r+t}, \OO_\ell(1)^t)$.
Moreover, the restriction $\Hom_{\PP^n}(\OO_{\PP^n}^{r+t},\OO_{\PP^n}(1)^t)\ra \Hom_\ell(\OO_{\ell}^{r+t}, \OO_{\ell}(1)^t)$ is surjective, and as a consequence the codimension in $\Hom_{\PP^n}(\OO_{\PP^n}(-1)^t,\OO_{\PP^n}^{r+t})$ of the set of $M$ such that $V\vert_\ell$ fails to  be ample has codimension at least $t-r+1$ as well.  

Since the set of lines in $\PP^n$ has dimension $2n-2$, we have that the overall codimension of the set of $M$ such that $V$ is a vector bundle but fails to be ample on a line is at least $t-r-2n+3$. The result follows since any vector bundle that fails to be ample fails to be ample on a line.
\end{proof}

\begin{remark}
The expected codimension of the space of lines $\ell$ where the splitting type of $V|_{\ell}$  has a trivial factor is $t-r+1$. Since the Grassmannian $\mathbb{G}(1,n)$ has dimension $2n-2$, if $t-r \leq 2n-3$, one would not expect $V$ to be ample.
\end{remark}

\begin{corollary}
Let $t > 0$ and $r\geq n$ be positive integers. Assume that $t-r > 2n-3$. Then the general bundle $V$ having the presentation 
\[
0\ra \OO_{\PP^n}(-d)^t\xrightarrow{M} \OO_{\PP^n}^{r+s}\ra V\ra 0
\] is ample. 
\end{corollary}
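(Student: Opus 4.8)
The plan is to reduce to the case $d=1$, which is Proposition~\ref{prop-ample}, by pulling back along a finite cover of $\PP^n$ and then invoking the openness of ampleness in families. (The presentation in the statement should be read with $\OO_{\PP^n}^{r+t}$ in the middle, matching~(\ref{eq-steiner}).)

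First I would consider the finite surjective morphism $f\colon\PP^n\to\PP^n$ given in homogeneous coordinates by $f([x_0:\cdots:x_n])=[x_0^d:\cdots:x_n^d]$; it satisfies $f^*\OO_{\PP^n}(1)\cong\OO_{\PP^n}(d)$. Applying $f^*$ to a presentation
\[
0\to\OO_{\PP^n}(-1)^t\xrightarrow{N}\OO_{\PP^n}^{r+t}\to W\to 0
\]
of a degree-one Steiner bundle yields
\[
0\to\OO_{\PP^n}(-d)^t\xrightarrow{f^*N}\OO_{\PP^n}^{r+t}\to f^*W\to 0,
\]
so $f^*W$ has a presentation of exactly the required shape. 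Since $t-r>2n-3$, Proposition~\ref{prop-ample} shows that for a general $N$ the cokernel $W$ is locally free and ample; as $f$ is finite and surjective, $f^*W$ is then also locally free and ample, ampleness of vector bundles being preserved by finite pullback (pass to projectivizations and use that ampleness of line bundles is preserved by finite pullback). Thus at least one bundle of the required form is ample.

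Next I would let $U\subset\Hom(\OO_{\PP^n}(-d)^t,\OO_{\PP^n}^{r+t})$ be the locus where the cokernel of the universal map is locally free. Because $r\geq n$, the non-injectivity locus of a general map has expected codimension $r+1>n$ and is therefore empty, so $U$ is a dense open subset of an affine space; in particular $U$ is irreducible and nonempty, and by the previous step it contains $f^*N$ for general $N$. Over $\PP^n\times U$ there is a universal bundle $\mathcal V$, and since a vector bundle is ample exactly when the tautological quotient line bundle on its projectivization is ample, the openness of ampleness for line bundles in proper families (see, e.g., \cite[\S 1.2]{Laz04}) gives that $U^{\mathrm{amp}}=\{M\in U:\coker M\text{ is ample}\}$ is open in $U$. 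The previous step shows $f^*N\in U^{\mathrm{amp}}$ for general $N$, so $U^{\mathrm{amp}}$ is a nonempty open subset of the irreducible variety $U$, hence dense. Therefore the general $M$ lies in $U^{\mathrm{amp}}$, i.e.\ the general $V$ is ample.

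The only point that requires care is the appeal to openness of ampleness: because the ample bundles $f^*W$ exhibited above form a thin, nowhere dense subfamily, exhibiting a single ample member only helps once one knows that ampleness is an open condition in the family. Everything else — the finiteness of $f$ and the identity $f^*\OO_{\PP^n}(1)=\OO_{\PP^n}(d)$, the stability of ampleness under finite pullback, and the irreducibility and nonemptiness of $U$ for $r\geq n$ — is routine.
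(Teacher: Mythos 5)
Your proposal is correct and follows essentially the same route as the paper: pull back an ample degree-one Steiner bundle (Proposition \ref{prop-ample}) along the degree-$d$ power map $f:\PP^n\to\PP^n$, use that ampleness is preserved under finite pullback, and conclude for the general $M$ by openness of ampleness over the irreducible parameter space. Your added details (irreducibility of the locus $U$, reduction to openness of ampleness for the tautological line bundle on the projectivization) are correct elaborations of the paper's more terse argument.
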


\begin{proof}
The pullback of an ample vector bundle under a finite morphism is ample \cite[Proposition 6.1.2]{Laz04}. Consider a general  self map $f$ of $\PP^n$ of degree $d$. If we pullback an ample bundle $V$  in Proposition   \ref{prop-ample}, $f^* V$ is also ample. Since ampleness is an open condition, then for the general map $M \in \Hom(\OO_{\PP^n}(-d)^t, \OO_{\PP^n}^{t+r})$ is also ample.
\end{proof}

If $V$ is a Steiner bundle and $a>0$ is an integer, then $V(a)$ is ample since Steiner bundles are globally generated. On the other hand, if $a < 0$, then $V(a)$ does not have any sections. We conclude this section with the following question.

\begin{question}
Let $a <0$ be a negative integer. Let  $V$ be a general  bundle defined by 
$$0 \to \OO_{\PP^n}(a)^t  \to \OO_{\PP^n}(a+1)^{t+r} \to V \to 0.$$
If $t \gg 0$ (depending on $a$ and $r$), is $V$ ample? 
\end{question}

\end{document}